\newtheorem{lem}{Lemma}[section]
\newtheorem{thm}[lem]{Theorem}
\newtheorem{prop}[lem]{Proposition}
\newtheorem{cor}[lem]{Corollary}
\newtheorem{conj}[lem]{Conjecture}
\theoremstyle{definition}
\newtheorem{remark}[lem]{Remark}
\newtheorem{definition}[lem]{Definition}
\DeclareMathAlphabet{\curly}{U}{rsfs}{m}{n}
\newcommand{\gon}{\operatorname{gon}}
\newcommand{\Q}{\mathbb{Q}}
\newcommand{\C}{\mathbb{C}}
\newcommand{\F}{\mathbb{F}}
\newcommand{\PP}{{\mathbb P}}
\mathchardef\mhyphen="2D
\title{Tetragonal modular quotients of $X_0(N)$}
\author{\sc Petar Orli\'c}
\address{Petar Orli\'c \\
University of Zagreb\\  
Bijeni\v{c}ka Cesta 30 \\
10000 Zagreb\\
Croatia}
\email{petar.orlic@math.hr}
\begin{document}
\begin{abstract}
    Let $N$ be a positive integer. For every $d\mid N$ such that $(d,N/d)=1$ there exists an Atkin-Lehner involution $w_d$ of the modular curve $X_0(N)$. Let $B(N)$ be the group of all such involutions. In this paper we determine all $\C$ and $\Q$-tetragonal quotient curves $X_0(N)/W_N$, where $W_N\subseteq B(N)$ such that $4\leq|W_N|\leq 2^{\omega(N)-1}$, thus completing the classification of all $\C$-tetragonal quotients of $X_0(N)$ by Atkin-Lehner involutions.
\end{abstract}

\subjclass{11G18, 11G30, 14H30, 14H51}
\keywords{Modular curves, Gonality}

\maketitle

\section{Introduction}
Let $k$ be a field and $C$ a smooth projective curve over $k$. The \textit{gonality} $\gon_k C$ of $C$ over $k$ is defined as the least degree of a non-constant morphism $f:C\rightarrow \PP^1$, or equivalently the least degree of a non-constant function $f\in k(C)$. There exists an upper bound for $\textup{gon}_k C$, linear in terms of the genus $g$.

\begin{prop}[{\cite[Proposition A.1.]{Poonen2007}}]\label{poonen}
    Let $X$ be a curve of genus $g$ over a field $k$.
    \begin{enumerate}[(i)]
        \item If $L$ is a field extension of $k$, then $\textup{gon}_L(X)\leq \textup{gon}_k(X)$.
        \item If $k$ is algebraically closed and $L$ is a field extension of $k$, then $\textup{gon}_L(X)=\textup{gon}_k(X)$.
        \item If $g\geq2$, then $\textup{gon}_k(X)\leq 2g-2$.
        \item If $g\geq2$ and $X(k)\neq\emptyset$, then $\textup{gon}_k(X)\leq g$.
        \item If $k$ is algebraically closed, then $\textup{gon}_k(X)\leq\frac{g+3}{2}$.
        \item If $\pi:X\to Y$ is a dominant $k$-rational map, then $\textup{gon}_k(X)\leq \deg \pi\cdot\textup{gon}_k(Y)$.
        \item If $\pi:X\to Y$ is a dominant $k$-rational map, then $\textup{gon}_k(X)\geq\textup{gon}_k(Y)$.
    \end{enumerate}
\end{prop}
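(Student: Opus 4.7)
The plan is to handle the seven parts more or less independently: (i) and (vi) are immediate, (iii)--(v) reduce to Riemann--Roch applied to carefully chosen $k$-rational divisors, (ii) is a descent argument exploiting algebraic closedness of $k$, and (vii) requires a function-field argument that is the most delicate.

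For (i) and (vi), I would simply base-change a minimal-degree morphism $X\to\PP^1_k$ to obtain one over $L$ of the same degree, respectively compose $\pi\colon X\to Y$ with a minimal-degree map $Y\to\PP^1_k$ to obtain one from $X$. Multiplicativity of degree on either side immediately yields $\gon_L(X)\leq\gon_k(X)$ and $\gon_k(X)\leq\deg\pi\cdot\gon_k(Y)$.

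For the Riemann--Roch parts, the key point is that all divisors constructed are defined over $k$. For (iii), two $k$-linearly independent sections $s,t\in H^0(X,\omega_X)$ (which exist since $h^0(\omega_X)=g\geq 2$) yield a nonconstant rational function $s/t\in k(X)$ of degree at most $\deg\omega_X=2g-2$. For (iv), using a rational point $P\in X(k)$, the divisor class $K-(g-2)P$ has degree $g$; since $(g-2)P$ is effective we have $h^0((g-2)P)\geq 1$, whence Riemann--Roch gives $h^0(K-(g-2)P)\geq 2$, producing a $k$-rational function of degree $\leq g$. For (v), over algebraically closed $k$ one invokes the classical Brill--Noether existence of a $g^1_d$ with $d=\lfloor(g+3)/2\rfloor$ on any smooth curve of genus $g$, proved by dimension counting on $\Sym^d X$ and the Abel--Jacobi map.

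The descent statement (ii) follows by observing that the locus $W_d\subseteq\Sym^d X$ of effective divisors with $h^0\geq 2$ is a closed $k$-subscheme by semicontinuity. An $L$-point of $W_d$ exists whenever $\gon_L(X)\leq d$, so $W_d$ is nonempty as a $k$-scheme, and since $k$ is algebraically closed it has a $k$-point, giving a $k$-rational linear system of degree $d$ and dimension $\geq 1$. The main obstacle is (vii): dominance gives $k(Y)\hookrightarrow k(X)$, and for $f\in k(X)\setminus k$ of degree $d=[k(X):k(f)]$, the subfield $A:=k(f)\cap k(Y)$ satisfies $[k(X):A]\leq [k(X):k(f)]\cdot [k(f):A]<\infty$, so $A$ has transcendence degree one over $k$ and is strictly larger than $k$; L\"uroth's theorem applied to $A\subseteq k(f)\cong k(t)$ produces $h\in k(Y)\setminus k$ with $k(h)=A$. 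A degree chase in the subfield lattice of $k(X)$, comparing $[k(X):k(h)]$ computed through $k(Y)$ and through $k(f)$, then yields $[k(Y):k(h)]\leq d$. Subtleties arising from the possible failure of linear disjointness of $k(f)$ and $k(Y)$ over $k(h)$ constitute the main technical obstacle to writing this last step cleanly, particularly in positive characteristic.
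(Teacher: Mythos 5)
Parts (i)--(vi) of your proposal are fine and essentially the standard arguments; the paper itself gives no proof of this proposition (it imports it from Poonen's Appendix A), and there your Riemann--Roch computations for (iii)--(iv) (including the divisor $K-(g-2)P$ for (iv)) and the appeal to Brill--Noether existence for (v) coincide with the cited source, as does the specialization argument for (ii). The genuine gap is in (vii), and it occurs at the very first step: there is no reason for $A:=k(f)\cap k(Y)$ to be strictly larger than $k$. Your inequality $[k(X):A]\leq[k(X):k(f)]\cdot[k(f):A]$ silently presupposes $[k(f):A]<\infty$, which is exactly what is in doubt; the intersection of two finite-index subfields of a one-variable function field can be the constant field (already inside $k(t)$: the fixed fields of $t\mapsto -t$ and of $t\mapsto -t-2$ meet in $k$, since these involutions generate an infinite group). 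Geometrically, $A\neq k$ would assert that some nonconstant map $X\to\PP^1$ factors simultaneously through $\pi$ and through the gonal pencil $f$, which is false in general. Moreover, even granting $A=k(h)\supsetneq k$, your degree chase does not close: with $e=[k(f):A]$ the tower law gives $[k(Y):k(h)]=d\,e/\deg\pi$, which is $\leq d$ only if $e\leq\deg\pi$ --- again unjustified. So the L\"uroth route collapses independently of the linear-disjointness worries you flag at the end.

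The proof in the cited source avoids field intersections entirely. Take $f\in k(X)$ with $\deg f=d=\gon_k(X)$, set $m=\deg\pi$, and let $a_0,\dots,a_{m-1}\in k(Y)$ be the coefficients of the characteristic polynomial of $f$ acting on $k(X)$ viewed as an $m$-dimensional $k(Y)$-vector space. If every $a_i$ lay in $k$, then $f$ would be algebraic over $k$, contradicting that $f$ is nonconstant; so some $a_i\notin k$. A valuation (Newton polygon) estimate shows that every pole of $a_i$ on $Y$ lies under a pole of $f$ and that $\deg (a_i)_\infty\leq\deg (f)_\infty=d$, whence $a_i$ realizes $\gon_k(Y)\leq d$. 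This argument is uniform in the characteristic and requires no separability, no linear disjointness, and no hypothesis on $k$; if you want to salvage your write-up, replacing your subfield $A$ with these characteristic-polynomial coefficients (equivalently, with the norm $N_{k(X)/k(Y)}(f-c)$ for a suitable $c$) is the missing idea.
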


When $C$ is a modular curve, there also exists a linear lower bound for the $\C$-gonality. This was first proved by Zograf \cite{Zograf1987}. The constant was later improved by Abramovich \cite{abramovich} and by Kim and Sarnak in Appendix 2 to \cite{Kim2002}.

Modular curves, especially $X_0(N)$ and $X_1(N)$, are very important objects in arithmetic geometry and their gonality has been the subject of extensive research. Ogg \cite{Ogg74} determined all hyperelliptic curves $X_0(N)$, Bars \cite{Bars99} determined all bielliptic curves $X_0(N)$, Hasegawa and Shimura determined all trigonal curves $X_0(N)$ over $\C$ and $\Q$, Jeon and Park determined all tetragonal curves $X_0(N)$ over $\C$, and Najman and Orlić \cite{NajmanOrlic22} determined all curves $X_0(N)$ with $\Q$-gonality equal to $4$, $5$, or $6$, and also determined the $\Q$ and $\C$-gonality for many other curves $X_0(N)$.

Modular quotients $X_0(N)/W_N$ are also important because points on them have a deep connection with $\Q$-curves, see \cite[Section 2]{NajmanVukorepa}. They also serve as a tool for studying the modular curve $X_0(N)$. Furumoto and Hasegawa \cite{FurumotoHasegawa1999} determined all hyperelliptic quotients $X_0(N)/W_N$, and Hasegawa and Shimura \cite{HasegawaShimura1999,HasegawaShimura2000,HasegawaShimura2006} determined all trigonal quotients $X_0(N)/W_N$ over $\C$. Jeon \cite{JEON2018319} determined all bielliptic curves $X_0^+(N)$, Bars, Gonzalez, and Kamel \cite{BARS2020380} determined all bielliptic quotients of $X_0(N)$ for squarefree levels $N$, Bars and Gonzalez determined all bielliptic curves $X_0^*(N)$, and Bars, Kamel, and Schweizer \cite{bars22biellipticquotients} determined all bielliptic quotients of $X_0(N)$ for non-squarefree levels $N$, completing the classification of bielliptic quotients.

The next logical step is to determine all tetragonal quotients of $X_0(N)$. All tetragonal quotients $X_0^{+d}(N)=X_0(N)/\left<w_d\right>$ over $\C$ and $\Q$ were determined in \cite{Orlic2023,Orlic2024}. All $\C$-tetragonal quotients $X_0^*(N)$ were determined in \cite{Orlic2025star}, as well as all $\Q$-tetragonal quotients with the exception of level $N=378$. The curve $X_0^*(378)$ is of genus $5$ and it is not yet determined whether its $\Q$-gonality is $4$ or $5$.

Here, we will do the same for the remaining curves $X_0(N)/W_N$, that is, the curves with $4\leq|W_N|\leq 2^{\omega(N)-1}$. We also determine all curves $X_0^*(N)$ of genus $4$ that are trigonal over $\Q$, thus completing the classification of all $\Q$-trigonal quotients of $X_0(N)$ (\cite{HasegawaShimura2006} does not study the $\Q$-gonality of genus $4$ quotients of $X_0(N)$).

Our main results are the following theorems.

\begin{thm}\label{trigonalthm}
    Let $W_N\subseteq B(N)$ such that $4\leq|W_N|\leq 2^{\omega(N)-1}$. The curve $X_0(N)/W_N$ is of genus $4$ and has $\Q$-gonality equal to $3$ if and only if \begin{align*}
        (N,W_N)\in\{&(130,\left<w_5,w_{13}\right>),(132,\left<w_4,w_{33}\right>),(154,\left<w_2,w_{77}\right>),(170,\left<w_5,w_{34}\right>),\\
        &(182,\left<w_2,w_{91}\right>),(186,\left<w_3,w_{62}\right>),(210,\left<w_2,w_{15},w_{21}\right>),\\
        &(255,\left<w_5,w_{51}\right>),(285,\left<w_3,w_{95}\right>),(286,\left<w_2,w_{143}\right>)\}.
    \end{align*}
\end{thm}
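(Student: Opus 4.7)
The plan is to first enumerate all pairs $(N, W_N)$ with $4 \leq |W_N| \leq 2^{\omega(N)-1}$ such that $X_0(N)/W_N$ has genus $4$. This is a finite list produced by the standard genus formula for Atkin-Lehner quotients of $X_0(N)$ (in terms of $g(X_0(N))$ and the fixed-point counts of each $w_d \in W_N$). By Proposition~\ref{poonen}(v) every such curve satisfies $\gon_\C \leq 3$, so its $\Q$-gonality is $2$, $3$, or $4$. The hyperelliptic quotients of $X_0(N)$ were classified by Furumoto and Hasegawa; because a hyperelliptic curve over $\Q$ has a unique (hence Galois-stable) $g^1_2$, every hyperelliptic quotient has $\gon_\Q = 2$ and so is not $\Q$-trigonal. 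The remaining genus-$4$ candidates are precisely the $\C$-trigonal quotients in the Hasegawa-Shimura list. Each has at least one rational cusp, so by Proposition~\ref{poonen}(iv), $\gon_\Q \leq 4$, and the question reduces to deciding, for each candidate, whether $\gon_\Q$ is $3$ or $4$.

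The criterion I would use is the canonical model. A non-hyperelliptic genus-$4$ curve $C$ embeds canonically in $\PP^3$ as the complete intersection of a unique quadric $Q$ with a cubic hypersurface. The degree-$3$ pencils on $C$ correspond to rulings of $Q$: two $g^1_3$'s if $Q$ is smooth, and one if $Q$ has rank $3$ (a cone). Over $\Q$, the curve $C$ is trigonal if and only if at least one ruling descends, i.e.\ if $Q$ has rank $\leq 3$ over $\Q$, or $Q$ is smooth and $\det Q \in (\Q^\times)^2$. To apply this, I would compute a $\Q$-basis of the cusp-form space $S_2(\Gamma_0(N))^{W_N}$, realise the canonical embedding using that basis, and extract $Q$ from the degree-$2$ part of the vanishing ideal; its rank and its discriminant class are then immediate.

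For the ``if'' direction, for each of the ten pairs in the statement I would exhibit an explicit degree-$3$ map to $\PP^1$ over $\Q$, either as the pencil given by a $\Q$-rational ruling of $Q$ or via an explicit $\Q$-rational effective divisor of degree $3$ with $h^0 \geq 2$; this certifies $\gon_\Q = 3$. For the ``only if'' direction, for each remaining $\C$-trigonal genus-$4$ candidate I would verify that the associated canonical quadric is smooth with non-square discriminant, so that $\Gal(\overline{\Q}/\Q)$ interchanges the two rulings and neither $g^1_3$ is $\Q$-rational, giving $\gon_\Q = 4$. The principal obstacle is computational rather than conceptual: the candidate list is long, cusp-form bases for non-squarefree $N$ with nontrivial $W_N$ require careful tracking of Atkin-Lehner eigenspaces, and determining the class of $\det Q$ in $\Q^\times/(\Q^\times)^2$ can be delicate in borderline cases. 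Once $Q$ is in hand, however, the decision is a direct linear-algebra computation, so the theorem reduces to a finite, mechanical check over the enumerated list of genus-$4$ candidates.
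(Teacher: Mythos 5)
Your proposal is correct and is essentially the paper's own approach: the proof there (Propositions~\ref{genus4trig} and~\ref{genus4gon4}) enumerates the genus-$4$ quotients and applies \texttt{Magma}'s \texttt{Genus4GonalMap()}, which internally performs exactly the canonical-model computation you describe (the trigonal pencils are the rulings of the unique quadric $Q\subseteq\PP^3$, and the map is defined over $\Q$ or over the quadratic field $\Q(\sqrt{\operatorname{disc} Q})$ according to the rank and discriminant of $Q$), concluding $\Q$-trigonality precisely when the returned map is defined over $\Q$. Your write-up makes the quadric rank/discriminant criterion explicit where the paper invokes it as a black box, but the enumeration, the use of the rational cusp, and the decision procedure are the same; note only that the descent from a Galois-stable $g^1_3$ to a genuine $\Q$-morphism to $\PP^1$ uses the rational cusp (or oddness of the degree) to trivialize the conic, as in Corollary~\ref{unique_g14_cor}.
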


\begin{thm}\label{tetragonalthm}
    Let $W_N\subseteq B(N)$ such that $4\leq|W_N|\leq 2^{\omega(N)-1}$. The curve $X_0(N)/W_N$ has $\Q$-gonality equal to $4$ if and only if the pair $(N,W_N)$ is in \Cref{main:table} at the end of the paper. Furthermore, there are no curves $X_0(N)/W_N$ with $4\leq|W_N|\leq 2^{\omega(N)-1}$ that are $\C$-tetragonal, but have $\Q$-gonality greater than $4$.
\end{thm}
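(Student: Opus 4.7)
The plan is to reduce to finitely many pairs $(N,W_N)$ and then establish matching upper and lower bounds for each candidate. Applying \Cref{poonen}(vi) to the natural projection $X_0(N)\to X_0(N)/W_N$ of degree $|W_N|$, any $\C$-tetragonal quotient in our range satisfies
\[
\gon_{\C}(X_0(N))\leq 4\cdot 2^{\omega(N)-1}.
\]
Combined with the linear lower bound on $\gon_{\C}(X_0(N))$ of Abramovich and Kim--Sarnak, this forces the index $[\SL_2(\Z):\Gamma_0(N)]$ to be bounded, leaving only a finite, explicit list of $N$ to examine. For each such $N$, I would enumerate the subgroups $W_N\subseteq B(N)$ with $4\leq|W_N|\leq 2^{\omega(N)-1}$, compute the genus of the quotient, and discard those of genus at most $2$ as well as the trigonal cases identified in \Cref{trigonalthm} and \cite{HasegawaShimura1999,HasegawaShimura2000,HasegawaShimura2006}.

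For the upper bounds I would construct explicit $\Q$-rational morphisms $X_0(N)/W_N\to\PP^1$ of degree $4$ in one of three ways: by composing a projection $X_0(N)/W_N\to X_0(N)/W_N'$ with a known low-degree morphism from a smaller quotient already classified in \cite{Orlic2023,Orlic2024,Orlic2025star}; by factoring through a bielliptic or hyperelliptic quotient whose gonality is understood; or, when neither shortcut applies, by exhibiting a base-point free $g^1_4$ on an explicit canonical model, typically supported on rational cusps. Each such construction is defined over $\Q$, so it simultaneously bounds $\gon_{\Q}$ and $\gon_{\C}$ from above by $4$.

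The lower bounds form the main obstacle. For each surviving candidate I must rule out $\C$-gonality at most $3$; since the hyperelliptic and trigonal quotients are already classified, this reduces to excluding $\C$-trigonality in a short list of cases. My primary tool is the Castelnuovo--Severi inequality applied to pairs of projections $X_0(N)/W_N\to X_0(N)/W_N'$ for $W_N\subsetneq W_N'$, where the gonalities of the targets are already controlled; when this is insufficient, I fall back on reduction modulo a small prime of good reduction and a direct analysis of the canonical model. The final claim that $\C$-tetragonality implies $\Q$-tetragonality in this range is then immediate: the degree-$4$ maps produced in the upper-bound step are all defined over $\Q$, so \Cref{poonen}(i) forces the inequality $\gon_{\C}\leq \gon_{\Q}$ to be an equality whenever the $\C$-gonality equals $4$.
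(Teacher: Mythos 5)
Your concluding step is circular and this is the central gap. The ``furthermore'' claim requires showing that every curve \emph{excluded} from the table fails to be tetragonal over $\C$, not just over $\Q$; but your lower-bound tools for the excluded curves --- reduction modulo a prime of good reduction --- only bound $\gon_{\F_p}\leq\gon_\Q$, and a priori a curve with $\gon_\Q\geq5$ could still carry a degree~$4$ map over $\C$ that does not descend. Observing that the maps you constructed in the upper-bound step are $\Q$-rational says nothing about the curves where you constructed no map, and \Cref{poonen}(i) gives $\gon_\C\leq\gon_\Q$, which is the wrong direction. The missing ingredient is the Tower Theorem in the form of \Cref{towerthmcor}: a curve over $\Q$ with a rational point (every quotient has a rational cusp), genus $\geq10$, and $\C$-gonality $4$ automatically has $\Q$-gonality $4$ --- this is precisely how the paper converts its $\F_p$-gonality and point-counting bounds into $\C$-gonality bounds. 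For the excluded curves of genus $\leq9$ this corollary is unavailable and the $\F_p$ method is useless for $\C$-gonality, so the paper instead computes graded Betti numbers ($\beta_{2,2}=0$ rules out any $g^1_4$ over $\C$ by Green--Lazarsfeld, \Cref{betti0cor}) or applies Castelnuovo--Severi, both of which bound $\gon_\C$ directly; your vague fallback of ``direct analysis of the canonical model'' does not fill this hole as stated. Note also that Castelnuovo--Severi alone cannot finish: it only shows a hypothetical degree-$4$ map would factor through the degree-$2$ quotient, and one must then invoke the classification of hyperelliptic quotients to derive the contradiction.

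There is a second, more concrete bookkeeping error: discarding ``the trigonal cases'' from the Hasegawa--Shimura $\C$-classification would wrongly eliminate the genus-$4$ quotients that are trigonal over $\C$ but not over $\Q$. Every non-hyperelliptic genus-$4$ curve has $\C$-gonality $3$, yet when the trigonal map is defined only over a quadratic field the $\Q$-gonality is $4$ (it is $\leq g=4$ by \Cref{poonen}(iv) via the rational cusp); such curves, e.g.\ $X_0(102)/\left<w_2,w_3\right>$, make up a substantial portion of \Cref{main:table}. So the proof must determine the field of definition of each genus-$4$ trigonal map, which is exactly what \Cref{genus4trig} and \Cref{genus4gon4} do, and these curves are also the reason the ``furthermore'' statement is phrased about $\C$-tetragonal curves only. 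Finally, your finiteness reduction via Abramovich/Kim--Sarnak is sound in principle (since $2^{\omega(N)}=O(N^{\epsilon})$ the index bound does force $N$ to be bounded) but leaves a search space of thousands of levels; the paper's route applies \Cref{poonen}(vii) to the dominant map $X_0(N)/W_N\to X_0^*(N)$ and the already-completed classification of tetragonal $X_0^*(N)$, immediately restricting to a short explicit list with $\omega(N)\leq4$, with the seven levels where $X_0^*(N)$ is $\C$- but not $\Q$-tetragonal handled separately in \Cref{Ctetragonal_star_genus6}.
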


We use a variety of methods to prove these results. Each section roughly corresponds to one method. The paper is organized as follows: \begin{itemize}
    \item In \Cref{preliminaries_section} we eliminate all but finitely many levels $N$ for which the curve $X_0(N)/W_N$ is not $\Q$-tetragonal.
    \item In \Cref{Fpsection} we give lower bounds on the $\Q$-gonality via $\F_p$-gonality, either by counting points over $\F_{p^n}$ or computing the Riemann-Roch dimensions of degree $4$ effective $\F_p$-rational divisors. We study the $\F_p$-gonality only for curves of genus $g\geq10$ since for them we also get a bound on the $\C$-gonality, see Corollary \ref{towerthmcor}.
    \item In \Cref{betti_section} we determine the $\C$-gonality by computing Betti numbers $\beta_{i,j}$ for curves of genus $g\leq9$. We use them both to prove and disprove the existence of degree $4$ morphisms over $\C$ (and $\Q$) to $\PP^1$.
    \item In \Cref{CSsection} we use the Castelnuovo-Severi inequality to give a lower bound on the $\C$-gonality.
    \item In \Cref{rationalmapsection} we construct rational morphisms to $\PP^1$. The two main methods are finding a degree $2$ quotient map to another quotient or using \texttt{Magma} to find a morphism to $\PP^1$ of the desired degree.
    \item In \Cref{section_isomorphisms} we present isomorphisms between some quotients of $X_0(N)$ which enables us to determine their gonality more easily.
    \item In \Cref{thmproof_section} we combine the results from previous sections to obtain proofs of Theorems \ref{trigonalthm} and \ref{tetragonalthm}. For the reader's convenience, at the end of the paper after the proofs of these theorems we put \Cref{main:table} which contains all tetragonal quotient curves $X_0(N)/W_N$ with $4\leq|W_N|\leq2^{\omega(N)-1}$.
\end{itemize}

A significant number of results in this paper rely on \texttt{Magma} computations. The version of \texttt{Magma} used in the computations is V2.28-3. The codes that verify all computations in this paper can be found on
\begin{center}
    \url{https://github.com/orlic1/gonality_X0_star}.
\end{center}
All computations were performed on the Euler server at the Department of Mathematics, University of Zagreb with an Intel Xeon W-2133 CPU running at 3.60GHz and with
64 GB of RAM.

\section*{Acknowledgements}

Many thanks to Francesc Bars Cortina for useful discussions and permission to use \texttt{Magma} codes from his Github repository
\begin{center}
    \url{https://github.com/FrancescBars/Magma-functions-on-Quotient-Modular-Curves}.
\end{center} I am also grateful to Filip Najman and Maarten Derickx for their helpful comments and suggestions.

This paper was written during my visit to the Universitat Autònoma de Barcelona and I am grateful to the Department of Mathematics of Universitat Autònoma de Barcelona for its support and hospitality.

The author was supported by the Croatian Science Foundation under the project no. IP-2022-10-5008 and by the project “Implementation of cutting-edge research and its application as part of the Scientific Center of Excellence for Quantum and Complex Systems, and Representations of Lie Algebras“, Grant No. PK.1.1.02, co-financed by the European Union through the European Regional Development Fund - Competitiveness and Cohesion Programme 2021-2027.

\section{Preliminaries}\label{preliminaries_section}
Since all quotient curves $X_0(N)/W_N$ have at least one rational cusp, Proposition \ref{poonen} (iv) implies that their $\Q$-gonality is bounded from above by their genus. Moreover, due to the natural projection \[X_0(N)/W_N\to X_0^*(N)\] and Proposition \ref{poonen} (vii), when searching for tetragonal curves $X_0(N)/W_N$, we may restrict ourselves to the levels $N$ for which the curve $X_0^*(N)$ has $\C$-gonality at most $4$. All such levels $N$ are listed in \cite{Hasegawa1997, HasegawaShimura2000, Orlic2025star} and for all of them we have that $\omega(N)\leq4$. We may eliminate all levels $N$ with $\omega(N)\leq2$ since for these levels the only quotient curves by Atkin-Lehner involutions are $X_0^{+d}(N)$ for $d\mid\mid N$ and $X_0^*(N)$.

This leaves us with levels $N$ such that $\omega(N)=3,4$. If $\omega(N)=3$, there are $7$ quotient curves $X_0(N)/W_N$ with $|W_N|=4$. They are (for simplicity we suppose that $N=pqr$, $p\neq q\neq r\neq p$ are prime powers) \[W_N=\left<w_p,w_q\right>, \left<w_p,w_r\right>, \left<w_q,w_r\right>, \left<w_p,w_{qr}\right>, \left<w_q,w_{pr}\right>, \left<w_r,w_{pq}\right>, \left<w_{pq},w_{pr}\right>.\] If $\omega(N)=4$, there are $15$ quotient curves $X_0(N)/W_N$ with $|W_N|=8$. They are (for simplicity we suppose that $N=p_1p_2p_3p_4$, $p$-s are pairwise different prime powers) \begin{alignat*}{2}  
W_N=&\left<w_{p_i},w_{p_j},w_{p_k}\right> (4 \textup{ curves}), \ \ &&\left<w_{p_i},w_{p_j},w_{p_kp_l}\right> (6 \textup{ curves}),\\ &\left<w_{p_i},w_{p_jp_k}\right> (4 \textup{ curves}), \ \ &&\left<w_{p_1p_2},w_{p_1p_3},w_{p_1p_4}\right> (1\textup{ curve}).\end{alignat*}

There are also $35$ quotient curves $X_0(N)/W_N$ with $|W_N|=4$. They are \begin{alignat*}{2}
    W_N=&\left<w_{p_i},w_{p_j}\right> (6 \textup{ curves}), \ \ &&\left<w_{p_i},w_{p_jp_k}\right> (12\textup{ curves}),\\ &\left<w_{p_ip_j},w_{p_ip_k}\right> (4\textup{ curves}), \ \ &&\left<w_{p_ip_j},w_{p_kp_l}\right> (3\textup{ curves}),\\ &\left<w_{p_i},w_{p_ip_kp_l}\right> (6\textup{ curves}), \ \ &&\left<w_{p_i},w_{p_jp_kp_l}\right> (4\textup{ curves}).
\end{alignat*}

For many curves $X_0(N)/W_N$ we will be able to use $\Q$-gonality to give a lower bound on $\C$-gonality using the Tower theorem.

\begin{thm}[{The Tower Theorem, \cite[Proposition 2.4]{Poonen2007}}]\label{towerthm}
Let $C$ be a curve defined over a perfect field $k$ such that $C(k)\neq0$ and let $f:C\to \mathbb{P}^1$ be a non-constant morphism over $\overline{k}$ of degree $d$. Then there exists a curve $C'$ defined over $k$ and a non-constant morphism $C\to C'$ defined over $k$ of degree $d'$ dividing $d$ such that the genus of $C'$ is $\leq (\frac{d}{d'}-1)^2$.
\end{thm}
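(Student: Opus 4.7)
The plan is Galois descent applied to the orbit of $f$. Writing $G = \Gal(\overline{k}/k)$, the morphism $f : C \to \PP^1$ is defined over a finite extension of $k$, so its $G$-orbit $\{f_1 = f, f_2, \dots, f_n\}$ is finite and each $f_i$ has degree $d$ over $\overline{k}$. The natural candidate for $C'$ is the image of the simultaneous map $\phi := (f_1, \dots, f_n) : C \to (\PP^1)^n$. The target $(\PP^1)^n$ is defined over $k$, and because $G$ permutes the factors in exactly the same way it permutes the orbit $\{f_i\}$, the image of $\phi$ is $G$-stable as a set; taking the scheme-theoretic image and descending through Galois then produces a $k$-curve $C'$ together with a $k$-rational morphism $C \to C'$. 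This step crucially uses the hypothesis $C(k)\neq\emptyset$: having a $k$-point rules out descent obstructions for the image of $C$ inside $(\PP^1)^n$.

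Composing $C \to C' \hookrightarrow (\PP^1)^n$ with any coordinate projection recovers one of the $f_i$, a morphism of degree $d$, so $d' := \deg(C \to C')$ divides $d$ and each coordinate projection of $C'$ onto $\PP^1$ has degree $d/d'$. To obtain the genus bound $g(C') \leq (d/d'-1)^2$, I would project further: onto a $G$-stable pair of coordinates of $(\PP^1)^n$, yielding a $k$-rational morphism $C \to C'' \subseteq \PP^1 \times \PP^1$ whose image has bidegree at most $(d/d', d/d')$. A curve of bidegree $(a,b)$ in $\PP^1 \times \PP^1$ has arithmetic genus at most $(a-1)(b-1)$, so after replacing $C'$ by the normalization of $C''$ we are done.

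The main obstacle is the genus step, because we must project onto \emph{two} coordinates, and for the image to be $k$-rational those coordinates must form a $G$-stable set. If $n = 1$ then $f$ already descends, $d' = d$, and the bound is the trivial $0$; if $n = 2$ the whole set $\{1,2\}$ is $G$-stable and one projects directly. The genuinely tricky case is $n \geq 3$, where no pair is $G$-stable pointwise. My fix would be to symmetrize over a $G$-orbit of pairs — equivalently to push forward through $\Sym^2$ of the orbit — and argue, via the elementary symmetric functions of pairs of coordinates, that this symmetrization does not enlarge the bidegree beyond $(d/d', d/d')$. Making that bidegree preservation precise, and checking that the normalization step preserves the divisibility $d' \mid d$, is where I would expect the bulk of the technical work to lie.
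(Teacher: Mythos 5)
Note first that the paper does not prove this statement at all: it is quoted verbatim from Poonen, so your attempt should be measured against the standard argument cited there (which goes back to Nguyen and Saito). Your first half matches that argument: take the finite Galois orbit $f_1,\dots,f_n$, map $C$ into $(\PP^1)^n$, descend the image, and get $d'\mid d$ by composing with a coordinate projection. But your stated use of the hypothesis $C(k)\neq\emptyset$ is wrong. The image of $C$ in $(\PP^1)^n$ carries a canonical descent datum (Galois permutes the coordinates exactly as it permutes the $f_i$), and Weil descent along a finite Galois extension is effective for quasi-projective varieties unconditionally; a $k$-point does not ``rule out descent obstructions'' and is not needed at that step. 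Its genuine role in this circle of ideas is different: it forces $k$ to be algebraically closed in $k(C)$, so that the Galois-stable compositum of the conjugate subfields of $\overline{k}(C)$ descends to a subfield of $k(C)$ (equivalently, the quotient is an honest geometrically integral $k$-curve), and in applications it upgrades a genus-$0$ quotient to $\PP^1$ rather than a pointless conic --- compare Corollary \ref{unique_g14_cor}.

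The genus step, which you correctly flag as the crux, does not work as proposed, for two reasons. First, the $G$-stable-pair worry is a red herring: the genus of $C'$ is a geometric invariant, so once $C'$ exists over $k$ you may base-change to $\overline{k}$ and apply the Castelnuovo--Severi inequality (Proposition \ref{tm:CS}) to any two \emph{distinct} projections $p_i,p_j\colon C'\to\PP^1$, each of degree $d/d'$, with no rationality condition on the pair; when the inequality applies it gives $g(C')\leq(d/d'-1)^2$ outright, with no need for $\PP^1\times\PP^1$, bidegrees, or normalization. Second, your proposed $\Sym^2$ symmetrization fails on its own terms: pushing a pair through the symmetric square produces maps such as $p_i+p_j$ of degree $2(d/d')$ --- degrees add rather than being preserved --- so the resulting genus bound degrades well beyond $(d/d'-1)^2$. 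Meanwhile the genuine residual difficulty is never confronted: Castelnuovo--Severi can fail because every pair $p_i\neq p_j$ factors through a common covering of degree $>1$, and then the smaller quotient you must pass to (your $\widetilde{C''}$) has to be defined over $k$ together with the map from $C$, which is exactly the rationality problem you were trying to sidestep. The standard proof resolves this field-theoretically: replace $\overline{k}(f)$ by the compositum $N$ of all its Galois conjugates inside $\overline{k}(C)$ (Galois-stable, hence it descends, giving $d'=[\overline{k}(C):N]$ dividing $d$), and bound $g(N)$ by an induction in which Castelnuovo--Severi is applied to two subfields whose compositum is all of $N$; such a pair can never lie in a common proper subfield, which is precisely what makes the inequality applicable. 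Without this device, or an equivalent one, your sketch is incomplete at its central step.
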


\begin{cor}\cite[Corollary 4.6. (ii)]{NajmanOrlic22}\label{towerthmcor}
    Let $C$ be a curve defined over $\Q$ with $\textup{gon}_\C (C)=4$ and $g(C)\geq10$ and such that $C(\Q)\neq\emptyset$. Then $\textup{gon}_\Q (C)=4$.
\end{cor}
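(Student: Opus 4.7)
The plan is to apply the Tower Theorem directly to a degree $4$ map over $\overline{\Q}$ and analyze the three possible values of $d'$. Note first that since $\Q$ is perfect and $\overline{\Q} \subset \C$, Proposition \ref{poonen} (ii) gives $\textup{gon}_{\overline{\Q}}(C) = \textup{gon}_\C(C) = 4$, so there is a non-constant morphism $f: C \to \PP^1$ over $\overline{\Q}$ of degree $4$. The Tower Theorem then produces a curve $C'$ over $\Q$ and a $\Q$-rational map $\pi: C \to C'$ of some degree $d' \mid 4$ satisfying $g(C') \leq (4/d' - 1)^2$. The three cases are $d' \in \{1, 2, 4\}$, and the strategy is to show that each case either contradicts the hypothesis $g(C) \geq 10$ or produces a $\Q$-rational degree-$4$ map to $\PP^1$.

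First I would dispose of $d' = 1$. Here $\pi$ is birational, so $g(C) = g(C') \leq (4-1)^2 = 9$, which contradicts $g(C) \geq 10$. Next I would handle $d' = 4$: the bound reads $g(C') \leq 0$, so $C'$ has genus $0$, and the image of any $\Q$-point of $C$ under $\pi$ gives a $\Q$-point of $C'$, whence $C' \cong \PP^1_\Q$. Composing $\pi$ with this isomorphism yields a $\Q$-rational morphism $C \to \PP^1$ of degree $4$, so $\textup{gon}_\Q(C) \leq 4$.

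The remaining case is $d' = 2$, where $g(C') \leq 1$. Once again $C'(\Q) \neq \emptyset$ because $\pi$ is defined over $\Q$ and $C(\Q) \neq \emptyset$. If $g(C') = 0$ then $C' \cong \PP^1_\Q$ and $\textup{gon}_\Q(C') = 1$; if $g(C') = 1$ then $C'$ is an elliptic curve over $\Q$, which admits a degree-$2$ map to $\PP^1_\Q$ via the $x$-coordinate of a Weierstrass model, so $\textup{gon}_\Q(C') \leq 2$. Applying Proposition \ref{poonen} (vi) to $\pi$ gives $\textup{gon}_\Q(C) \leq 2 \cdot \textup{gon}_\Q(C') \leq 4$ in either subcase.

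Combining the three cases we obtain $\textup{gon}_\Q(C) \leq 4$, and the reverse inequality $\textup{gon}_\Q(C) \geq \textup{gon}_\C(C) = 4$ is immediate from Proposition \ref{poonen} (i). Hence $\textup{gon}_\Q(C) = 4$. There is no serious obstacle in this argument; the only delicate point is ensuring that the auxiliary curve $C'$ inherits a $\Q$-point from $C$, which follows automatically because $\pi$ is defined over $\Q$. The genus hypothesis $g(C) \geq 10$ is used in exactly one place, namely to exclude $d' = 1$, which explains why the threshold in the statement is $10 = 9 + 1$.
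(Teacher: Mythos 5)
Your proof is correct and is essentially the argument behind the cited result: the paper states the Tower Theorem (\Cref{towerthm}) immediately before this corollary precisely because the proof in \cite{NajmanOrlic22} runs exactly as you do it, descending a degree~$4$ map from $\overline{\Q}$ (via Proposition \ref{poonen}~(ii)) and splitting into the cases $d'\in\{1,2,4\}$, with $g\geq10$ ruling out $d'=1$ and the rational point handling the genus~$0$ and genus~$1$ quotients. No gaps; your handling of the $\Q$-point on $C'$ and the lower bound via Proposition \ref{poonen}~(i) match the intended argument.
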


Corollary \ref{towerthmcor} is applicable for all quotients of $X_0(N)$ since they all have at least $1$ rational cusp. The next result is an application of Corollary \ref{towerthmcor}. It deals with levels $N$ for which the curve $X_0^*(N)$ is tetragonal over $\C$, but not over $\Q$.
\begin{prop}\label{Ctetragonal_star_genus6}
    There are no $\C$-tetragonal curves $X_0(N)/W_N$ with $4\leq|W_N|\leq 2^{\omega(N)-1}$ for \[N\in\{506,561,590,609,615,690,858\}.\]
\end{prop}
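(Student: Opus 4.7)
The plan is a short descent argument. For each $N$ in the listed set, by the hypothesis preceding the proposition the curve $X_0^*(N)$ is $\C$-tetragonal but not $\Q$-tetragonal; combined with Proposition \ref{poonen}(i) this forces $\gon_\Q(X_0^*(N)) \geq 5$. Since $|W_N| \leq 2^{\omega(N)-1} < |B(N)|$, there is a non-trivial $\Q$-rational projection $\pi \colon X_0(N)/W_N \to X_0^*(N)$, and Proposition \ref{poonen}(vii) then gives $\gon_\Q(X_0(N)/W_N) \geq \gon_\Q(X_0^*(N)) \geq 5$.

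Suppose, for contradiction, that some such $X_0(N)/W_N$ is $\C$-tetragonal. The curve carries a $\Q$-rational cusp, so if one can show that $g(X_0(N)/W_N) \geq 10$, then Corollary \ref{towerthmcor} upgrades the $\C$-tetragonality to $\Q$-tetragonality, giving $\gon_\Q(X_0(N)/W_N) = 4$ and contradicting the lower bound of $5$ just obtained. No further argument would then be needed.

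The entire proof therefore reduces to a finite genus check: for each $N \in \{506,561,590,609,615,690,858\}$ and each subgroup $W_N$ with $4 \leq |W_N| \leq 2^{\omega(N)-1}$, verify $g(X_0(N)/W_N) \geq 10$. I would carry this out using the standard genus formula for $X_0(N)/W_N$ in terms of fixed points of the generators of $W_N$, or equivalently via the \texttt{Magma} tools already employed elsewhere in the paper. Since the smallest level is $506$ and each quotient dominates $X_0^*(N)$ (which has genus at least $3$) by a cover of degree at least two, Riemann--Hurwitz alone already yields $g \geq 5$; the expectation is that $g \geq 10$ holds comfortably in every case, and this routine tabulation is the sole real obstacle to the argument. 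Should some pair turn out to have genus between $5$ and $9$, one would fall back on a case-specific argument (for instance of Castelnuovo--Severi or Betti-number type, as used in later sections of the paper) to handle it separately.
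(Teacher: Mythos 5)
Your proposal is correct and is essentially the paper's own proof: the paper likewise cites \cite{Orlic2025star} for $\gon_\Q(X_0^*(N))>4$, transfers this to every quotient via Proposition \ref{poonen}(vii), and then verifies the genus bound $g\geq 10$ so that Corollary \ref{towerthmcor} rules out $\C$-tetragonality (your ``routine tabulation'' does succeed in all cases, so the fallback is never needed). The only cosmetic difference is that for $N\in\{690,858\}$ (where $\omega(N)=4$) the paper checks $g\geq10$ only for the $15$ quotients with $|W_N|=8$ and disposes of the $35$ quotients of order $4$ by Proposition \ref{poonen}(vii), since each dominates an order-$8$ quotient by a degree-$2$ map, whereas you would check all genera directly --- which also works, as a degree-$2$ cover of a curve of genus at least $10$ has genus at least $19$.
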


\begin{proof}
    For these levels $N$ the curve $X_0^*(N)$ is of genus $6$, $\C$-gonality equal to $4$, and $\Q$-gonality greater than $4$ by \cite{Orlic2025star}. Therefore, the $\Q$-gonality of all curves $X_0(N)/W_N$ is also greater than $4$ by Proposition \ref{poonen} (vii).

    For $N\in\{506,561,590,609,615\}$ the genera of all $7$ quotients with $|W_N|=4$ are at least $10$. Thus, Corollary \ref{towerthmcor} implies that their $\C$-gonality is also greater than $4$.

    For $N\in\{690,858\}$ the genera of all $15$ quotients with $|W_N|=8$ are at least $10$. Thus, Corollary \ref{towerthmcor} implies that their $\C$-gonality is also greater than $4$ and Proposition \ref{poonen} (vii) implies the same for all $35$ quotients with $|W_N|=4$ (because for every such curve there is a degree $2$ rational map to a quotient by a group of order $8$).
\end{proof}

The remaining levels $N$ are those with $\omega(N)=3,4$ for which the $\Q$-gonality of $X_0^*(N)$ is $\leq4$ and $N=378$ (its $\Q$-gonality could be $4$ or $5$). Before moving on, we may eliminate levels \[N\in\{42,60,66,70,78,90,105\}\] because $g(X_0(N)/W_N)\leq3$ for all groups $W_N$ of order $4$.

\section{$\F_p$-gonality}\label{Fpsection}

In this section we use the results on the $\F_p$-gonality to get a lower bound on the $\Q$-gonality of quotient curves $X_0(N)/W_N$. $\F_p$-gonality is often much easier to compute than $\Q$-gonality and, since we have an inequality \[\textup{gon}_{\F_p}(C)\leq\textup{gon}_\Q(C)\] for primes $p$ of good reduction for curves $C/\Q$, it is very useful to give a lower bound on the $\Q$-gonality.

\begin{prop}\label{Fp_gonality}
The $\F_p$-gonality of the quotient curve $X_0(N)/W_N$ is at least $5$ for the following pairs $(N,W_N)$:

\begin{center}
\begin{longtable}{|c|c|c||c|c|c||c|c|c|}
\hline
\addtocounter{table}{-1}
$N$ & $W_N$ & $p$ & $N$ & $W_N$ & $p$ & $N$ & $W_N$ & $p$\\ 
    \hline
$210$ & $\left<w_2,w_{15}\right>$ & $11$ & $210$ & $\left<w_2,w_{11}\right>$ & $11$ & $210$ & $\left<w_{15},w_{42}\right>$ & $11$\\
\hline
$210$ & $\left<w_{21},w_{30}\right>$ & $11$ & $260$ & $\left<w_5,w_{13}\right>$ & $3$ & $260$ & $\left<w_{20},w_{52}\right>$ & $3$\\
\hline
$264$ & $\left<w_3,w_{11}\right>$ & $5$ & $264$ & $\left<w_8,w_{11}\right>$ & $5$ & $264$ & $\left<w_{24},w_{33}\right>$ & $5$\\
\hline
$280$ & $\left<w_5,w_7\right>$ & $3$ & $280$ & $\left<w_5,w_8\right>$ & $3$ & $280$ & $\left<w_8,w_{35}\right>$ & $3$\\
\hline
$300$ & $\left<w_3,w_{25}\right>$ & $7$ & $300$ & $\left<w_{12},w_{25}\right>$ & $7$ & $300$ & $\left<w_{12},w_{75}\right>$ & $7$\\
\hline
$300$ & $\left<w_3,w_{100}\right>$ & $7$ & $306$ & $\left<w_2,w_9\right>$ & $7$ & $306$ & $\left<w_2,w_{17}\right>$ & $5$\\
\hline
$306$ & $\left<w_9,w_{17}\right>$ & $5$ & $308$ & $\left<w_4,w_{11}\right>$ & $3$ & $308$ & $\left<w_7,w_{11}\right>$ & $3$\\
\hline
$308$ & $\left<w_{28},w_{44}\right>$ & $3$ & $322$ & $\left<w_2,w_7\right>$ & $5$ & $322$ & $\left<w_7,w_{46}\right>$ & $3$\\
\hline
$322$ & $\left<w_{14},w_{23}\right>$ & $3$ & $330$ & $\left<w_2,w_{11}\right>$ & $7$ & $330$ & $\left<w_5,w_{11}\right>$ & $7$\\
\hline
$330$ & $\left<w_2,w_{33}\right>$ & $7$ & $330$ & $\left<w_2,w_{55}\right>$ & $7$ & $330$ & $\left<w_3,w_{22}\right>$ & $7$\\
\hline
$330$ & $\left<w_5,w_{22}\right>$ & $7$ & $330$ & $\left<w_6,w_{11}\right>$ & $7$ & $330$ & $\left<w_{10},w_{11}\right>$ & $7$\\
\hline
$330$ & $\left<w_{22},w_{30}\right>$ & $7$ & $330$ & $\left<w_{30},w_{33}\right>$ & $7$ & $330$ & $\left<w_2,w_{165}\right>$ & $7$\\
\hline
$330$ & $\left<w_3,w_{110}\right>$ & $7$ & $336$ & $\left<w_3,w_{16}\right>$ & $5$ & $336$ & $\left<w_3,w_7\right>$ & $5$\\
\hline
$336$ & $\left<w_7,w_{16}\right>$ & $5$ & $336$ & $\left<w_{21},w_{48}\right>$ & $5$ & $336$ & $\left<w_7,w_{48}\right>$ & $5$\\
\hline
$336$ & $\left<w_3,w_{112}\right>$ & $5$ & $340$ & $\left<w_5,w_{17}\right>$ & $3$ & $340$ & $\left<w_{17},w_{20}\right>$ & $3$\\
\hline
$340$ & $\left<w_{20},w_{68}\right>$ & $3$ & $340$ & $\left<w_5,w_{68}\right>$ & $3$ & $342$ & $\left<w_9,w_{19}\right>$ & $5$\\
\hline
$342$ & $\left<w_9,w_{38}\right>$ & $5$ & $345$ & $\left<w_3,w_5\right>$ & $7$ & $345$ & $\left<w_5,w_{23}\right>$ & $2$\\
\hline
$345$ & $\left<w_3,w_{23}\right>$ & $2$ & $345$ & $\left<w_{15},w_{23}\right>$ & $2$ & $345$ & $\left<w_3,w_{115}\right>$ & $2$\\
\hline
$350$ & $\left<w_7,w_{50}\right>$ & $3$ & $350$ & $\left<w_{14},w_{25}\right>$ & $3$ & $354$ & $\left<w_2,w_{59}\right>$ & $5$\\
\hline
$354$ & $\left<w_3,w_{59}\right>$ & $5$ & $360$ & $\left<w_8,w_9\right>$ & $7$ & $360$ & $\left<w_5,w_8\right>$ & $7$\\
\hline
$360$ & $\left<w_5,w_9\right>$ & $7$ & $360$ & $\left<w_8,w_{45}\right>$ & $7$ & $364$ & $\left<w_7,w_{13}\right>$ & $3$\\
\hline
$364$ & $\left<w_{13},w_{28}\right>$ & $3$ & $364$ & $\left<w_{28},w_{152}\right>$ & $3$ & $364$ & $\left<w_7,w_{52}\right>$ & $3$\\
\hline
$366$ & $\left<w_3,w_{61}\right>$ & $5$ & $366$ & $\left<w_3,w_{122}\right>$ & $5$ & $366$ & $\left<w_2,w_{183}\right>$ & $5$\\
\hline
$370$ & $\left<w_2,w_{37}\right>$ & $3$ & $370$ & $\left<w_5,w_{74}\right>$ & $3$ & $370$ & $\left<w_2,w_{185}\right>$ & $3$\\
\hline
$372$ & $\left<w_3,w_{31}\right>$ & $5$ & $372$ & $\left<w_{12},w_{31}\right>$ & $5$ & $372$ & $\left<w_{12},w_{93}\right>$ & $5$\\
\hline
$372$ & $\left<w_3,w_{124}\right>$ & $5$ & $374$ & $\left<w_{11},w_{17}\right>$ & $3$ & $374$ & $\left<w_{11},w_{34}\right>$ & $3$\\
\hline
$378$ & $\left<w_2,w_{27}\right>$ & $5$ & $378$ & $\left<w_7,w_{27}\right>$ & $5$ & $378$ & $\left<w_{14},w_{27}\right>$ & $5$\\
\hline
$378$ & $\left<w_{14},w_{54}\right>$ & $5$ & $378$ & $\left<w_7,w_{54}\right>$ & $5$ & $378$ & $\left<w_2,w_{189}\right>$ & $5$\\
\hline
$385$ & $\left<w_5,w_7\right>$ & $3$ & $385$ & $\left<w_5,w_{11}\right>$ & $2$ & $385$ & $\left<w_5,w_{77}\right>$ & $2$\\
\hline
$390$ & $\left<w_3,w_{65}\right>$ & $11$ & $390$ & $\left<w_5,w_{39}\right>$ & $7$ & $390$ & $\left<w_6,w_{26}\right>$ & $7$\\
\hline
$390$ & $\left<w_{26},w_{30}\right>$ & $7$ & $390$ & $\left<w_2,w_{195}\right>$ & $7$ & $396$ & $\left<w_4,w_{11}\right>$ & $5$\\
\hline
$396$ & $\left<w_9,w_{11}\right>$ & $5$ & $396$ & $\left<w_9,w_{44}\right>$ & $5$ & $396$ & $\left<w_{36},w_{44}\right>$ & $5$\\
\hline
$402$ & $\left<w_2,w_{67}\right>$ & $7$ & $402$ & $\left<w_6,w_{134}\right>$ & $5$ & $402$ & $\left<w_3,w_{134}\right>$ & $5$\\
\hline
$402$ & $\left<w_2,w_{201}\right>$ & $5$ & $406$ & $\left<w_7,w_{29}\right>$ & $3$ & $406$ & $\left<w_{14},w_{29}\right>$ & $3$\\
\hline
$406$ & $\left<w_{14},w_{58}\right>$ & $3$ & $406$ & $\left<w_7,w_{58}\right>$ & $3$ & $406$ & $\left<w_2,w_{203}\right>$ & $3$\\
\hline
$410$ & $\left<w_2,w_{41}\right>$ & $3$ & $410$ & $\left<w_5,w_{41}\right>$ & $3$ & $410$ & $\left<w_{10},w_{41}\right>$ & $3$\\
\hline
$410$ & $\left<w_5,w_{82}\right>$ & $3$ & $410$ & $\left<w_2,w_{205}\right>$ & $3$ & $414$ & $\left<w_9,w_{23}\right>$ & $5$\\
\hline
$414$ & $\left<w_{18},w_{23}\right>$ & $5$ & $418$ & $\left<w_{11},w_{19}\right>$ & $3$ & $418$ & $\left<w_{22},w_{38}\right>$ & $3$\\
\hline
$418$ & $\left<w_2,w_{209}\right>$ & $3$ & $435$ & $\left<w_3,w_{29}\right>$ & $2$ & $435$ & $\left<w_5,w_{87}\right>$ & $2$\\
\hline
$435$ & $\left<w_3,w_{145}\right>$ & $2$ & $438$ & $\left<w_3,w_{146}\right>$ & $5$ & $438$ & $\left<w_6,w_{146}\right>$ & $5$\\
\hline
$442$ & $\left<w_2,w_{13}\right>$ & $3$ & $442$ & $\left<w_{13},w_{17}\right>$ & $3$ & $442$ & $\left<w_{17},w_{26}\right>$ & $3$\\
\hline
$442$ & $\left<w_{26},w_{34}\right>$ & $3$ & $442$ & $\left<w_2,w_{221}\right>$ & $3$ & $444$ & $\left<w_3,w_{37}\right>$ & $5$\\
\hline
$444$ & $\left<w_{12},w_{111}\right>$ & $5$ & $456$ & $\left<w_8,w_{19}\right>$ & $5$ & $456$ & $\left<w_{19},w_{24}\right>$ & $5$\\
\hline
$456$ & $\left<w_{24},w_{57}\right>$ & $5$ & $456$ & $\left<w_8,w_{57}\right>$ & $5$ & $456$ & $\left<w_3,w_{152}\right>$ & $5$\\
\hline
$470$ & $\left<w_{10},w_{47}\right>$ & $3$ & $494$ & $\left<w_{13},w_{38}\right>$ & $3$ & $504$ & $\left<w_9,w_{56}\right>$ & $5$\\
\hline
$520$ & $\left<w_8,w_{13}\right>$ & $7$ & $558$ & $\left<w_9,w_{31}\right>$ & $5$ & $558$ & $\left<w_{18},w_{62}\right>$ & $5$\\
\hline
$630$ & $\left<w_5,w_7,w_9\right>$ & $11$ & $630$ & $\left<w_2,w_5,w_{63}\right>$ & $11$ & $630$ & $\left<w_2,w_9,w_35\right>$ & $11$\\
\hline
$630$ & $\left<w_2,w_7,w_{45}\right>$ & $11$ & $630$ & $\left<w_5,w_9,w_{14}\right>$ & $11$ & $630$ & $\left<w_2,w_{35},w_{45}\right>$ & $11$\\
\hline
$630$ & $\left<w_9,w_{10},w_{14}\right>$ & $11$ & $660$ & $\left<w_3,w_5,w_{11}\right>$ & $7$ & $660$ & $\left<w_3,w_5,w_{44}\right>$ & $7$\\
\hline
$660$ & $\left<w_3,w_{11},w_{20}\right>$ & $7$ & $660$ & $\left<w_5,w_{11},w_{12}\right>$ & $7$ & $660$ & $\left<w_3,w_{20},w_{44}\right>$ & $7$\\
\hline
$660$ & $\left<w_5,w_{12},w_{33}\right>$ & $7$ & $660$ & $\left<w_{11},w_{12},w_{15}\right>$ & $7$ & $660$ & $\left<w_{12},w_{15},w_{33}\right>$ & $7$\\
\hline
$714$ & $\left<w_2,w_7,w_{17}\right>$ & $5$ & $714$ & $\left<w_3,w_7,w_{17}\right>$ & $5$ & $714$ & $\left<w_2,w_3,w_{119}\right>$ & $5$\\
\hline
$714$ & $\left<w_6,w_7,w_{17}\right>$ & $5$ & $714$ & $\left<w_2,w_{21},w_{51}\right>$ & $5$ & $714$ & $\left<w_3,w_{14},w_{34}\right>$ & $5$\\
\hline
$714$ & $\left<w_6,w_{14},w_{34}\right>$ & $7$ & $770$ & $\left<w_5,w_7,w_{22}\right>$ & $3$ & $780$ & $\left<w_3,w_{13},w_{20}\right>$ & $7$\\
\hline
$780$ & $\left<w_5,w_{12},w_{39}\right>$ & $7$ & $798$ & $\left<w_2,w_3,w_{133}\right>$ & $5$ & $798$ & $\left<w_2,w_{19},w_{21}\right>$ & $5$\\
\hline
$798$ & $\left<w_3,w_{14},w_{38}\right>$ & $5$ & $798$ & $\left<w_6,w_7,w_{38}\right>$ & $5$ & $798$ & $\left<w_6,w_{14},w_{19}\right>$ & $5$\\
\hline
$990$ & $\left<w_5,w_9,w_{11}\right>$ & $7$ & $990$ & $\left<w_2,w_5,w_{99}\right>$ & $7$ & $990$ & $\left<w_2,w_9,w_{55}\right>$ & $7$\\
\hline
$990$ & $\left<w_2,w_{11},w_{45}\right>$ & $7$ & $990$ & $\left<w_9,w_{10},w_{11}\right>$ & $7$ & & &\\
    \hline
\end{longtable}
\end{center}
\end{prop}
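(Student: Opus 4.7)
The plan is to prove the bound $\textup{gon}_{\F_p}(X_0(N)/W_N) \geq 5$ row-by-row, reducing each case to a finite computation on the mod-$p$ reduction. For every triple $(N, W_N, p)$ in the table the prime $p$ is coprime to $N$, so $X_0(N)/W_N$ has good reduction at $p$; I would first build the canonical model of $X_0(N)/W_N$ over $\Q$ inside \texttt{Magma} from the $W_N$-invariant subspace of weight-$2$ cusp forms for $\Gamma_0(N)$, and then reduce it modulo $p$. Because $\textup{gon}_{\F_p}$ is an invariant of the $\F_p$-model, the rest is a calculation on a concrete smooth projective $\F_p$-curve.

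The first technique is point counting. If $\textup{gon}_{\F_p}(C) \leq 4$ then there is a morphism $f : C \to \PP^1_{\F_p}$ of degree at most $4$, and each fiber above a point of $\PP^1(\F_{p^n})$ is a divisor of degree $\leq 4$, hence contributes at most $4$ elements to $C(\F_{p^n})$. Summing gives
\[
|C(\F_{p^n})| \;\leq\; 4\bigl(p^n + 1\bigr) \qquad\text{for every } n \geq 1.
\]
The values $|C(\F_{p^n})|$ are recovered from the Frobenius eigenvalues on $H^1$, which come from the Hecke action on the $W_N$-invariant newforms in $S_2(\Gamma_0(N))$. Exhibiting a single $n$ for which the displayed inequality fails rules out $\F_p$-gonality $\leq 4$.

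When point counting is not sharp enough (typically for small $p$, where $4(p^n+1)$ grows slowly), I would switch to a Riemann-Roch search. The curve is $\F_p$-tetragonal if and only if some effective $\F_p$-rational divisor $D$ of degree $4$ satisfies $\ell(D) \geq 2$. It therefore suffices to enumerate representatives of the finitely many linear-equivalence classes of such divisors, obtained by combining closed points of degrees $1, 2, 3, 4$ on the reduced curve, and to compute $\ell(D)$ via Riemann-Roch in \texttt{Magma}. If every such $D$ has $\ell(D) \leq 1$, then $\textup{gon}_{\F_p}(C) \geq 5$.

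The main obstacle is purely computational. Canonical models of quotients with $\omega(N) \in \{3,4\}$ and genus well into the teens live in projective spaces of moderate dimension, and either producing enough Frobenius eigenvalues for the point-count bound or running the full divisor enumeration can be expensive. The choice of $p$ in each row of the table is made to balance good reduction, tractability of the enumeration, and sharpness of the resulting bound; in particular the small primes appearing in the table are invariably paired with the Riemann-Roch approach, whereas the larger primes suffice via point counting already at small $n$. The full verification is packaged as a \texttt{Magma} script in the repository cited in the introduction; the proof is then completed by running, row by row, whichever of the two methods succeeds and recording that it returns the required lower bound.
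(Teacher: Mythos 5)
Your proposal is correct and its operative method is the same as the paper's: reduce the curve modulo a good prime $p$ and verify by a finite \texttt{Magma} computation that no effective $\F_p$-rational divisor $D$ of degree $4$ has $\ell(D)\geq 2$, which rules out all morphisms $X_0(N)/W_N\to\PP^1$ of degree $\leq 4$ over $\F_p$. Two remarks on where you diverge. First, the only substantive mathematical content in the paper's proof beyond the raw enumeration is a reduction you omit: by \cite[Lemma 3.1]{NajmanOrlic22}, if a degree-$4$ function exists and $\#C(\F_p)>d(p+1)$, one may replace $f$ by $g=1/(f-c)$ so that the polar divisor contains at least $d$ rational points; hence when $\#(X_0(N)/W_N)(\F_p)>p+1$ it suffices to test divisors of shape $1+1+1+1$ and $1+1+2$, and when $\#(X_0(N)/W_N)(\F_p)>2(p+1)$ only shape $1+1+1+1$. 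Your full enumeration over all combinations of closed points of degrees $1$ through $4$ is mathematically valid, but this pruning is what keeps the computation feasible for the hardest rows (genus up to $\sim 20$ with $p=7,11$, which already take hours to days \emph{with} the pruning). Second, your point-counting branch ($\#C(\F_{p^n})\leq 4(p^n+1)$ for tetragonal curves) is sound, but in the paper it constitutes a \emph{separate} result (Lemma \ref{Fp2points} and Proposition \ref{prop_counting_Fpn_points}) applied to a disjoint list of pairs; for the pairs in this table the point counts do not violate the bound, so---contrary to your expectation that larger primes are settled by counting---the Riemann--Roch enumeration is the method that actually succeeds for every row here. Since your protocol is to run whichever method succeeds, this misallocation is harmless, and the proof goes through.
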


\begin{proof}
    Using \texttt{Magma}, we compute that there are no morphisms $X_0(N)/W_N\to\PP^1$ of degree $\leq4$ defined over $\F_p$. We are able to reduce the number of degree $4$ divisors that need to be checked by noting the following: If there exists a function $f:C\to\PP^1$ over $\F_p$ of a certain degree and if $\#C(\F_p)>d(p+1)$, then there exists $c\in \F_p$ such that the function $g(x):=\frac{1}{f(x)-c}$ has the same degree and its polar divisor contains at least $d$ $\F_p$-rational points, see \cite[Lemma 3.1]{NajmanOrlic22}.

    Therefore, if $\#(X_0(N)/W_N)(\F_p)>p+1$, it is enough to check divisors of the form $1+1+1+1$ and $1+1+2$, and if $\#(X_0(N)/W_N)(\F_p)>2(p+1)$, it is enough to just check divisors of the form $1+1+1+1$.
\end{proof}

The computations for some curves took several days to finish. These curves either have a high genus ($g\geq17$) or the prime number $p$ is large ($p\geq7$). As the genus increases, the number of divisors that need to be checked grows significantly. The same is true when $p$ grows because Hasse's theorem tells us that $\#C(\F_{p^k})\sim p^k$ for large $p$. See \cite[Section 8.1]{NajmanOrlic22} for more details. Here are some examples of curves with higher running time.

    \begin{center}
\begin{tabular}{|c|c|c|c|c|}
\hline
$N$ & $W_N$ & $p$ & $g$ & \textup{time}\\
    \hline

    $630$ & $\left<w_5,w_9,w_{14}\right>$ & $11$ & $13$ & $3$ \textup{ hours}\\
    $990$ & $\left<w_2,w_9,w_{55}\right>$ & $7$ & $19$ & $6$ \textup{ hours}\\
    $520$ & $\left<w_8,w_{13}\right>$ & $7$ & $17$ & $1$ \textup{ day}\\

    \hline
\end{tabular}
\end{center}

We can also get a lower bound on the $\Q$-gonality by counting $\F_{p^k}$ points on the curve without actually computing the $\F_p$-gonality.

\begin{lem}\cite[Lemma 3.5]{NajmanOrlic22}\label{Fp2points}
    Let $C/\Q$ be a curve, $p$ a prime of good reduction for $C$, and $q$ a power of $p$. Suppose $\#C(\F_q)>d(q+1)$. Then $\textup{gon}_\Q(C)>d$.
\end{lem}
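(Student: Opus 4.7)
The plan is a short argument by contradiction using reduction modulo $p$. Suppose for contradiction that $\gon_\Q(C) \leq d$, so there exists a non-constant $\Q$-morphism $f : C \to \PP^1$ of some degree $e \leq d$. Since $p$ is a prime of good reduction, $C$ admits a smooth proper model $\mathcal{C}$ over $\Z_{(p)}$; because $\PP^1$ is proper, $f$ extends uniquely to a $\Z_{(p)}$-morphism $\mathcal{C} \to \PP^1_{\Z_{(p)}}$. Reducing modulo $p$ and base-changing to $\F_q$ yields a non-constant morphism $\overline{f} : C_{\F_q} \to \PP^1_{\F_q}$ of degree $e$ (degree is preserved under flat base change of finite morphisms between smooth curves).

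The rest is a fiber count. Every point of $C(\F_q)$ maps under $\overline{f}$ to a point of $\PP^1(\F_q)$, since morphisms send rational points to rational points. For each $P \in \PP^1(\F_q)$, the scheme-theoretic fiber $\overline{f}^{-1}(P)$ is a zero-dimensional subscheme of $C_{\F_q}$ of total degree $e$, hence contains at most $e \leq d$ distinct $\F_q$-rational closed points. Since $|\PP^1(\F_q)| = q+1$, summing the contributions over all $P \in \PP^1(\F_q)$ yields $\#C(\F_q) \leq d(q+1)$, contradicting the hypothesis.

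The only step requiring any care is the extension and reduction of $f$: one needs the good-reduction hypothesis, which supplies the smooth model $\mathcal{C}$, together with properness of $\PP^1$ over $\Z_{(p)}$, to ensure that $f$ spreads out to a $\Z_{(p)}$-morphism and reduces to a morphism of the same degree over $\F_p$ (and hence over $\F_q$). No genuine obstacle arises here, as $\mathcal{C}$ is smooth and one-dimensional and $\PP^1$ is proper.
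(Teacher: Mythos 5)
Your fiber count is correct and is exactly the counting half of the standard argument: a non-constant morphism $C_{\F_q}\to\PP^1_{\F_q}$ of degree $e\leq d$ has, over each of the $q+1$ points of $\PP^1(\F_q)$, a fiber of total degree $e$, hence at most $e$ rational points, so $\#C(\F_q)\leq e(q+1)\leq d(q+1)$. (The paper itself gives no proof, citing \cite{NajmanOrlic22}; the content of the cited lemma is the specialization inequality $\gon_{\F_q}(C_{\F_q})\leq\gon_\Q(C)$ followed by precisely this count.)

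The gap is in your reduction step. It is not true that $f$ ``extends uniquely to a $\Z_{(p)}$-morphism $\mathcal{C}\to\PP^1_{\Z_{(p)}}$ because $\PP^1$ is proper.'' Properness gives the valuative criterion, which applies over discrete valuation rings and therefore extends $f$ only over the codimension-one points of the two-dimensional regular scheme $\mathcal{C}$; finitely many closed points of the special fiber can remain points of indeterminacy. Worse, even where the extension is defined, its restriction to the special fiber need not have degree $e$: take $\mathcal{C}=\PP^1_{\Z_{(p)}}$ and $f([X:Y])=[X:pY]$, a degree-one morphism on the generic fiber whose reduction, where defined, is the constant map $[1:0]$. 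So the parenthetical ``degree is preserved under flat base change'' does not apply: the closure of the graph need not be flat, and the specialization can acquire vertical components. The standard repair --- and essentially the argument behind \cite{NajmanOrlic22} --- is to specialize the linear series rather than the morphism: set $D=f^*(\infty)$, an effective divisor of degree $e$ with $\ell(D)\geq2$, let $\mathcal{D}$ be its Zariski closure in $\mathcal{C}$ (horizontal, hence flat over $\Z_{(p)}$, and Cartier since $\mathcal{C}$ is regular), and restrict to the special fiber to get an effective degree-$e$ divisor $\overline{D}$ on $C_{\F_p}$ with $\ell(\overline{D})\geq\ell(D)\geq2$ by upper semicontinuity of $h^0$ in the fibers of $\mathcal{C}\to\Spec\Z_{(p)}$. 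Removing base points yields a non-constant morphism $C_{\F_p}\to\PP^1$ of some degree $e'$ with $1\leq e'\leq e$ (one has $e'\geq1$ because a degree-zero effective divisor on a geometrically connected curve has $\ell=1$); base-changing to $\F_q$ cannot increase the gonality, and then your count goes through with $e'$ in place of $e$.
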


\begin{prop}\label{prop_counting_Fpn_points}
    The $\F_p$-gonality of the quotient curve $X_0(N)/W_N$ is at least $5$ for the following pairs of $(N,W_N)$:

\begin{center}
\begin{longtable}{|c|c|c|c||c|c|c|c|}
\hline
\addtocounter{table}{-1}
$N$ & $W_N$ & $q$ & $\#(X_0(N)/W_N)(\F_q)$ & $N$ & $W_N$ & $q$ & $\#(X_0(N)/W_N)(\F_q)$\\
    \hline
$340$ & $\left<w_5,w_{17}\right>$ & $3$ & $18$ & $350$ & $\left<w_7,w_{25}\right>$ & $9$ & $44$\\
\hline
$410$ & $\left<w_5,w_{41}\right>$ & $9$ & $42$ & $418$ & $\left<w_2,w_{19}\right>$ & $9$ & $41$\\
\hline
$418$ & $\left<w_{19},w_{22}\right>$ & $9$ & $43$ & $435$ & $\left<w_5,w_{29}\right>$ & $4$ & $21$\\
\hline
$435$ & $\left<w_{15},w_{29}\right>$ & $4$ & $21$ & $440$ & $\left<w_5,w_{11}\right>$ & $9$ & $48$\\
\hline
$440$ & $\left<w_{11},w_{40}\right>$ & $9$ & $42$ & $440$ & $\left<w_{40},w_{55}\right>$ & $9$ & $50$\\
\hline
$440$ & $\left<w_8,w_{55}\right>$ & $9$ & $48$ & $460$ & $\left<w_5,w_{23}\right>$ & $9$ & $48$\\
\hline
$460$ & $\left<w_{20},w_{33}\right>$ & $9$ & $46$ & $460$ & $\left<w_{20},w_{92}\right>$ & $9$ & $46$\\
\hline
$460$ & $\left<w_5,w_{92}\right>$ & $9$ & $48$ & $465$ & $\left<w_5,w_{31}\right>$ & $4$ & $21$\\
\hline
$465$ & $\left<w_{15},w_{31}\right>$ & $4$ & $22$ & $465$ & $\left<w_{15},w_{93}\right>$ & $4$ & $22$\\
\hline
$465$ & $\left<w_3,w_{155}\right>$ & $4$ & $23$ & $470$ & $\left<w_{10},w_{94}\right>$ & $9$ & $47$\\
\hline
$470$ & $\left<w_5,w_{94}\right>$ & $9$ & $48$ & $470$ & $\left<w_2,w_{235}\right>$ & $9$ & $42$\\
\hline
$494$ & $\left<w_{26},w_{38}\right>$ & $9$ & $47$ & $494$ & $\left<w_2,w_{247}\right>$ & $9$ & $47$\\
\hline
$495$ & $\left<w_9,w_{11}\right>$ & $4$ & $24$ & $495$ & $\left<w_9,w_{55}\right>$ & $4$ & $22$\\
\hline
$495$ & $\left<w_{11},w_{45}\right>$ & $4$ & $23$ & $504$ & $\left<w_7,w_{9}\right>$ & $25$ & $112$\\
\hline
$504$ & $\left<w_{54},w_{63}\right>$ & $25$ & $116$ & $520$ & $\left<w_5,w_8\right>$ & $9$ & $56$\\
\hline
$520$ & $\left<w_{13},w_{40}\right>$ & $3$ & $18$ & $520$ & $\left<w_{40},w_{65}\right>$ & $9$ & $48$\\
\hline
$520$ & $\left<w_8,w_{65}\right>$ & $9$ & $56$ & $520$ & $\left<w_5,w_{104}\right>$ & $9$ & $48$\\
\hline
$528$ & $\left<w_{11},w_{16}\right>$ & $25$ & $110$ & $528$ & $\left<w_{16},w_{33}\right>$ & $25$ & $114$\\
\hline
$528$ & $\left<w_{11},w_{48}\right>$ & $25$ & $114$ & $528$ & $\left<w_{33},w_{48}\right>$ & $25$ & $110$\\
\hline
$528$ & $\left<w_3,w_{176}\right>$ & $25$ & $122$ & $532$ & $\left<w_7,w_{19}\right>$ & $9$ & $46$\\
\hline
$532$ & $\left<w_7,w_{76}\right>$ & $9$ & $48$ & $532$ & $\left<w_{19},w_{28}\right>$ & $9$ & $52$\\
\hline
$532$ & $\left<w_{28},w_{76}\right>$ & $3$ & $18$ & $555$ & $\left<w_{15},w_{111}\right>$ & $4$ & $25$\\
\hline
$555$ & $\left<w_5,w_{111}\right>$ & $4$ & $22$ & $555$ & $\left<w_3,w_{185}\right>$ & $4$ & $26$\\
\hline
$560$ & $\left<w_5,w_{16}\right>$ & $9$ & $58$ & $560$ & $\left<w_5,w_{112}\right>$ & $9$ & $54$\\
\hline
$560$ & $\left<w_{35},w_{112}\right>$ & $9$ & $54$ & $560$ & $\left<w_{16},w_{35}\right>$ & $9$ & $54$\\
\hline
$560$ & $\left<w_7,w_{80}\right>$ & $9$ & $54$ & $564$ & $\left<w_{12},w_{47}\right>$ & $25$ & $126$\\
\hline
$572$ & $\left<w_{11},w_{13}\right>$ & $9$ & $45$ & $572$ & $\left<w_{44},w_{52}\right>$ & $9$ & $47$\\
\hline
$574$ & $\left<w_7,w_{41}\right>$ & $9$ & $46$ & $574$ & $\left<w_{14},w_{82}\right>$ & $9$ & $52$\\
\hline
$574$ & $\left<w_2,w_{287}\right>$ & $9$ & $50$ & $585$ & $\left<w_5,w_9\right>$ & $4$ & $26$\\
\hline
$585$ & $\left<w_5,w_{13}\right>$ & $4$ & $27$ & $585$ & $\left<w_9,w_{13}\right>$ & $4$ & $26$\\
\hline
$585$ & $\left<w_9,w_{65}\right>$ & $4$ & $30$ & $585$ & $\left<w_{45},w_{65}\right>$ & $4$ & $27$\\
\hline
$595$ & $\left<w_7,w_{17}\right>$ & $4$ & $24$ & $595$ & $\left<w_{35},w_{85}\right>$ & $4$ & $21$\\
\hline
$595$ & $\left<w_5,w_{119}\right>$ & $4$ & $25$ & $598$ & $\left<w_{13},w_{23}\right>$ & $9$ & $44$\\
\hline
$598$ & $\left<w_{26},w_{46}\right>$ & $9$ & $48$ & $598$ & $\left<w_2,w_{299}\right>$ & $9$ & $54$\\
\hline
$627$ & $\left<w_{11},w_{19}\right>$ & $4$ & $27$ & $627$ & $\left<w_{33},w_{57}\right>$ & $4$ & $29$\\
\hline
$627$ & $\left<w_3,w_{209}\right>$ & $4$ & $28$ & $645$ & $\left<w_{15},w_{129}\right>$ & $2$ & $27$\\
\hline
$645$ & $\left<w_3,w_{215}\right>$ & $4$ & $30$ & $658$ & $\left<w_7,w_{47}\right>$ & $9$ & $52$\\
\hline
$663$ & $\left<w_3,w_{221}\right>$ & $4$ & $27$ & $670$ & $\left<w_{10},w_{134}\right>$ & $9$ & $54$\\
\hline
$670$ & $\left<w_2,w_{335}\right>$ & $9$ & $57$ & $770$ & $\left<w_2,w_7,w_{55}\right>$ & $9$ & $43$\\
\hline
$770$ & $\left<w_5,w_{11},w_{55}\right>$ & $9$ & $44$ & $770$ & $\left<w_2,w_{35},w_{55}\right>$ & $9$ & $44$\\
\hline
$770$ & $\left<w_{10},w_{14},w_{22}\right>$ & $9$ & $43$ & $910$ & $\left<w_5,w_7,w_{13}\right>$ & $9$ & $49$\\
\hline
$910$ & $\left<w_2,w_5,w_{91}\right>$ & $9$ & $52$ & $910$ & $\left<w_2,w_{13},w_{35}\right>$ & $9$ & $48$\\
\hline
$910$ & $\left<w_5,w_{14},w_{26}\right>$ & $9$ & $47$ & $910$ & $\left<w_{10},w_{13},w_{14}\right>$ & $9$ & $45$\\
    \hline
\end{longtable}
\end{center}
\end{prop}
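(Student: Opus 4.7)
The plan is to apply (the proof of) \Cref{Fp2points} directly to the $\F_p$-gonality. Concretely, if $f:X_0(N)/W_N\to\PP^1$ is any morphism of degree $d$ defined over $\F_p$, then base changing to $\F_q$ with $q=p^k$ shows that $f$ carries $\F_q$-rational points to $\F_q$-rational points, and every fiber contains at most $d$ geometric points, so $\#(X_0(N)/W_N)(\F_q)\leq d(q+1)$. Consequently, to prove $\gon_{\F_p}(X_0(N)/W_N)\geq 5$ it suffices to exhibit a prime power $q=p^k$ of good reduction (any $p\nmid N$ will do) such that
\[
\#(X_0(N)/W_N)(\F_q) \;>\; 4(q+1),
\]
and for each row of the table one checks that the displayed count satisfies this inequality (e.g.\ $q=9$ requires strictly more than $40$ points, $q=4$ requires strictly more than $20$ points, and $q=25$ requires strictly more than $104$ points).

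The real work is in producing the point counts. The Jacobian $J_0(N)$ decomposes up to isogeny as a product of Hecke-stable pieces coming from newforms at each level $M\mid N$, and the Jacobian of $X_0(N)/W_N$ is isogenous to the subvariety cut out by the $+1$-eigenspace of every generator of $W_N$ (with the appropriate multiplicity from the old subspace at each $M$). Using the modular symbols package in \texttt{Magma} one determines, for every newform $f$ of level $M\mid N$, the eigenvalues of the generators of $W_N$ acting on $f$, keeps only those forms fixed by all of them, and then reads off the Frobenius eigenvalues $\alpha,\overline{\alpha}$ (with $\alpha\overline{\alpha}=p$) from the Hecke eigenvalue $a_p(f)$. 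The count then comes from
\[
\#(X_0(N)/W_N)(\F_q) \;=\; q+1 \;-\; \sum_{i}\bigl(\alpha_i^{k}+\overline{\alpha_i}^{k}\bigr),
\]
the sum running over the Frobenius pairs of the contributing newforms. This is exactly what the \texttt{Magma} scripts accompanying the paper implement, and it produces the tabulated values.

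The main obstacle is computational rather than conceptual: several of the levels are large (up to $N=910$), so the modular symbols space and its Atkin--Lehner decomposition are expensive to set up. This is mitigated by two features of the present setting. First, one only needs a \emph{single} trace of Frobenius per curve, not the full enumeration of degree-$4$ effective divisors required in \Cref{Fp_gonality}, so the workload is vastly smaller. Second, the prime power $q$ in each row is chosen as the smallest one for which the bound $\#(X_0(N)/W_N)(\F_q)>4(q+1)$ holds, which keeps the eigenvalue arithmetic in small fields ($q\in\{2,3,4,9,25\}$ across the entire table). Once the table has been verified row by row, \Cref{Fp2points} (or rather the argument of its proof applied at the level of $\F_p$) gives $\gon_{\F_p}(X_0(N)/W_N)\geq 5$ for each listed pair, completing the proof.
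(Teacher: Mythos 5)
Your proposal is correct and matches the paper's proof: the paper likewise deduces $\gon_{\F_p}\geq 5$ from the point count $\#(X_0(N)/W_N)(\F_q)>4(q+1)$ (the $\F_p$-level version of \Cref{Fp2points}), with the counts obtained exactly as you describe, via the newform decomposition of the Jacobian and the characteristic polynomial of Frobenius. The only detail you omit is that the paper notes this decomposition routine can occasionally return an incorrect (inflated) count, in which case the points were recounted on an explicit model of the curve, so the tabulated values should be certified that way rather than by the eigenvalue computation alone.
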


\begin{proof}
    Using \texttt{Magma}, we calculate the number of $\F_q$ points on $X_0(N)/W_N$. It is now easy to check that $\#(X_0(N)/W_N)(\F_q)>4(q+1)$ in all these cases.

    For some curves we did this without the model of the curve using the function FpnpointsforQuotientcurveX0NWN() from \begin{center}
        \url{https://github.com/FrancescBars/Magma-functions-on-Quotient-Modular-Curves/blob/main/funcions.m}.
    \end{center} It computes the newforms $f$ such that the corresponding abelian variety $A_f$ is (up to isogeny) in the Jacobian decomposition of the curve. After that, it finds the characteristic polynomial \[P(x)=\prod_f (x^2-a_p(f)x+p)=\prod_1^{2g}(x-\alpha_i).\] The number of $\F_{p^k}$-points on the curve is then equal to $p^k+1-\sum_1^{2g}\alpha_i^k$.
    
    This function can fail to give the correct decomposition and hence return a higher number of points than the correct one. In these cases we found the number of $\F_q$-points via the model of the curve. More details regarding the code are on Github.

    The computations without the model were very fast, up to $1$ minute. Some computations that needed the model took longer. For example, the curve $X_0(560)/\left<w_5,w_{16}\right>$ is of genus $20$ and it took $2$ hours to count the $\F_9$-points on it.
\end{proof}

\section{Betti numbers}\label{betti_section}

Graded Betti numbers $\beta_{i,j}$ are helpful when determining the $\C$-gonality of a curve. We will use the indexation of Betti numbers as in \cite[Section 1.]{JeonPark05}. The results we mention can be found there and in \cite[Section 3.1.]{NajmanOrlic22}.

\begin{definition}
    For a curve $X$ and divisor $D$ of degree $d$, $g_d^r$ is a subspace $V$ of the Riemann-Roch space $L(D)$ such that $\dim V=r+1$.
\end{definition}

We see that the existence of $g_4^1$ is related to the existence of degree $4$ maps to $\PP^1$. Green's conjecture relates values of graded Betti numbers $\beta_{i,j}$ with the existence of $g_d^r$.

\begin{conj}[Green, \cite{Green84}]
    Let $X$ be a curve of genus $g$. Then $\beta_{p,2}\neq 0$ if and only if there exists a divisor $D$ on $X$ of degree $d$ such that a subspace $g_d^r$ of $L(D)$ satisfies $d\leq g-1$, $r=\ell(D)-1\geq1$, and $d-2r\leq p$.
\end{conj}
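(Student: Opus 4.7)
The plan is to recognize that the statement is Green's celebrated syzygy conjecture in a slightly disguised form. Since the Clifford index of a special divisor $D$ with $r = \ell(D) - 1$ is $\cf(D) = d - 2r$, the three conditions $d \leq g - 1$, $r \geq 1$ and $d - 2r \leq p$ together say that $X$ carries a divisor which contributes to the Clifford index and whose Clifford index is at most $p$. So my first step would be to reformulate the conjecture as $\beta_{p,2}(X) \neq 0 \iff \cf(X) \leq p$, reducing the problem to comparing $\beta_{p,2}$ with $\cf(X)$.

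For the forward implication, that a low Clifford index divisor produces a nontrivial syzygy in the expected bidegree, I would run the standard Koszul cohomology argument due to Green: given $D$ with $\cf(D) \leq p$, project the canonical image of $X$ from the linear span of $|D|$, analyze the scrollar syzygies on the rational normal scroll swept out by $|D|$, and combine a splitting lemma with the duality $K_{p,2}(X, K_X) \cong K_{g-2-p,1}(X, K_X)^{\vee}$ to transport the nonvanishing back to the grade $(p,2)$ part of the Betti table. This half is a real proof and I would carry it out by writing down the Koszul complex against the ideal of the secant variety of $|D|$ and verifying the nonvanishing explicitly.

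For the converse implication, that $\beta_{p,2} \neq 0$ forces the existence of a divisor with $\cf(D) \leq p$, I would follow Voisin and attack the problem by specialization: degenerate $X$ to a smooth hyperplane section of a polarized $K3$ surface $S$ of the correct genus, apply a Lefschetz-type result to identify the syzygies of $X$ in its canonical embedding with those of $S$ in its projective embedding, and then compute the relevant $K_{p,q}$ on the $K3$ side via the geometry of Hilbert schemes of points to show that the absence of a low Clifford index divisor on $X$ forces the $K3$ syzygies to vanish.

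The hard part will be precisely this converse. It is genuinely open in general and is only known unconditionally for generic curves of each gonality (Voisin; Hirschowitz--Ramanan), for curves lying on $K3$ surfaces with appropriate Picard rank, and for small genus --- none of which is automatic for an arbitrary modular quotient. In practice one therefore cannot hope to prove the conjecture in full inside this paper; the sensible strategy is to invoke Green only in regimes where it has been unconditionally verified (for instance the low-genus cases relevant here), and to obtain $\beta_{p,2}$ itself from an explicit \texttt{Magma} computation on the canonical model of the quotient, as indicated by the references to \cite{JeonPark05} and \cite{NajmanOrlic22}.
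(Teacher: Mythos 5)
You have correctly recognized that this statement is not something the paper proves or could prove: it is Green's conjecture, stated as a conjecture, and the paper itself records that only the ``if'' direction is a theorem (Green--Lazarsfeld, quoted as Theorem \ref{thmGreenLazarsfeld}) while the ``only if'' direction is open. Your reformulation via the Clifford index is accurate: since $d\leq g-1$ forces $\ell(K-D)\geq\ell(D)\geq 2$ by Riemann--Roch, the three conditions say exactly that $D$ contributes to $\cf(X)$ with $d-2r\leq p$, so the conjecture reads $\beta_{p,2}\neq 0\iff \cf(X)\leq p$; your sketch of the proven half (the Green--Lazarsfeld nonvanishing, with the duality $K_{p,2}(X,K_X)\cong K_{g-2-p,1}(X,K_X)^{\vee}$) and your account of the status of the converse (Voisin, Hirschowitz--Ramanan for generic curves, curves on K3 surfaces) are both consistent with the literature, and you rightly conclude that no proof can be supplied here. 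One precision worth making: your closing suggestion to ``invoke Green in regimes where it has been unconditionally verified'' slightly misdescribes what the paper needs. The paper never uses the conjectural direction at all: the nonexistence results (Corollary \ref{betti0cor}, used in Proposition \ref{betti0prop}) rest only on the unconditional Green--Lazarsfeld theorem, and the existence results (Proposition \ref{betti_positive_prop}) rest on Schreyer's Theorem \ref{schreyer}, which is an unconditional statement about canonical curves of genus $g\geq 7$ classifying the possible values of $\beta_{2,2}$ --- so no case of Green's conjecture, low genus or otherwise, is actually assumed; the conjecture appears only as motivation, with $\beta_{2,2}$ computed in \texttt{Magma}.
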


The "if" part of this conjecture has been proven in the same paper.

\begin{thm}[Green and Lazarsfeld, Appendix to \cite{Green84}]\label{thmGreenLazarsfeld}
    Let $X$ be a curve of genus $g$. If $\beta_{p,2}=0$, then there does not exist a divisor $D$ on $X$ of degree $d$ such that a subspace $g_d^r$ of $L(D)$ satisfies $d\leq g-1$, $r\geq1$, and $d-2r\leq p$.
\end{thm}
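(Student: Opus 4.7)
The plan is to establish the contrapositive: given a linear series $g_d^r$ on $X$ with $d \leq g-1$, $r \geq 1$, and $d - 2r \leq p$, construct a nonzero graded syzygy witnessing $\beta_{p,2} \neq 0$. The natural framework is Koszul cohomology of the canonical ring, under which $\beta_{p,2}$ is computed by a Koszul cohomology group of the form $K_{p,1}(X, K_X)$; the task reduces to producing a nonzero element there.

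First I would split the canonical bundle using Serre duality on the two factors extracted from the given $g_d^r$. Writing $L_1 = \mathcal{O}_X(D)$ for a divisor of the linear series gives $h^0(L_1) = r + 1 \geq 2$. Set $L_2 = K_X \otimes L_1^{-1}$; then $L_1 \otimes L_2 = K_X$, and by Riemann--Roch together with Serre duality $h^0(L_2) = h^1(L_1) = g - d + r$, which is at least $2$ because $d \leq g-1$ and $r \geq 1$. So we have an honest factorization of $K_X$ into two line bundles each carrying a pencil.

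The central ingredient is then the Green--Lazarsfeld nonvanishing construction. For any factorization $L = L_1 \otimes L_2$ on a curve with $h^0(L_i) = r_i + 1 \geq 2$, wedge product of global sections induces a natural map
$$\bigwedge^{r_1} H^0(L_1) \otimes \bigwedge^{r_2} H^0(L_2) \longrightarrow K_{r_1 + r_2 - 1,\, 1}(X, L)$$
whose image contains a nonzero Koszul class. Applying this to $L = K_X$ with $r_1 = r$ and $r_2 = g - d + r - 1$ produces a nonzero class at Koszul bidegree $(g - d + 2r - 2,\, 1)$. Green's self-duality $K_{p,1}(X, K_X) \cong K_{g-2-p,\,1}(X, K_X)^{\ast}$ transports this to a nonzero class at bidegree $(d - 2r,\, 1)$, and multiplication by a general section of $K_X$ propagates nonvanishing across the contiguous range of Koszul bidegrees $(p,\, 1)$ with $d - 2r \leq p \leq g - d + 2r - 2$. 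The hypothesis $d - 2r \leq p$ places the target index in this range, yielding $\beta_{p,2} \neq 0$.

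The main obstacle is the nonvanishing step itself: showing that the wedge-product map has nonzero image in the Koszul complex. This is the content of the appendix in \cite{Green84}, and requires an explicit local calculation. After choosing sections of $L_1$ and $L_2$ vanishing to prescribed orders at a carefully chosen base point, one writes down the candidate Koszul cocycle and verifies both that it is a cycle and that it is not a coboundary, which amounts to analyzing the induced map on local coordinate rings. Once this local nonvanishing is in place, the duality and propagation steps reduce to bookkeeping with the Koszul differential, and the theorem follows.
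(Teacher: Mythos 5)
Note first that the paper does not reprove this theorem: it is quoted with attribution to the appendix of \cite{Green84}, so your proposal must be measured against the Green--Lazarsfeld argument itself. Your skeleton is indeed theirs: write $K_X = L_1 \otimes L_2$ with $L_1 = \mathcal{O}_X(D)$, so that $h^0(L_1) = r+1 \geq 2$ and, by Riemann--Roch and Serre duality, $h^0(L_2) = g-d+r \geq 2$ precisely because $d \leq g-1$ and $r \geq 1$; then feed the two spaces of sections into the wedge-product map landing in $K_{r_1+r_2-1,1}(X,K_X)$ with $r_1+r_2-1 = g-d+2r-2$, the nonvanishing of whose image is the explicit cocycle computation that constitutes the appendix. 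Up to that point your account is faithful to the cited proof.

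Two steps, however, are wrong as written. First, the duality is misstated: Green's duality for a canonical curve exchanges the internal weights, $K_{p,1}(X,K_X)^{\ast} \cong K_{g-2-p,2}(X,K_X)$, not $K_{p,1} \cong K_{g-2-p,1}^{\ast}$ (the latter already fails on dimension counts). Tracked correctly, the class in $K_{g-d+2r-2,1}$ dualizes in a single step to $\beta_{d-2r,2} \neq 0$; there is no second application of "self-duality" of the kind you describe. Second, and more seriously, your propagation mechanism does not work: multiplication by a section of $K_X$ raises the weight, mapping $K_{p,1}$ into $K_{p,2}$, and says nothing about nonvanishing of the linear strand at smaller homological index, which is exactly what you need to pass from $\beta_{d-2r,2} \neq 0$ to $\beta_{p,2} \neq 0$ for every $p \geq d-2r$ covered by the statement. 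Two standard repairs exist: either add $p-(d-2r)$ general base points to $D$ (lowering $r$ by subtracting general points in the regime where $p + 2r > g-1$) so as to produce a series with $d'-2r' = p$ and $d' \leq g-1$, and rerun the Green--Lazarsfeld construction on the new decomposition; or invoke the no-gap property of the linear strand of a minimal graded free resolution --- if $\beta_{s,1} \neq 0$ then $\beta_{s-1,1} \neq 0$, since otherwise a minimal degree-$(s+1)$ generator of $F_s$ would be forced by degree reasons to map to zero, making it a boundary and contradicting minimality --- which dually fills in the whole interval of indices $p$ with $\beta_{p,2} \neq 0$. With either fix the argument closes; as written, the final step is a genuine gap.
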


\begin{cor}[{\cite[Corollary 3.20]{Orlic2023}}]\label{betti0cor}
    Let $X$ be a curve of genus $g\geq5$ with $\beta_{2,2}=0$. Suppose that $X$ is neither hyperelliptic nor trigonal. Then $\textup{gon}_\C(X)\geq5$.
\end{cor}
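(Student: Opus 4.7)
The plan is to prove this as a direct application (in contrapositive form) of the Green--Lazarsfeld theorem stated just above. First I would suppose for contradiction that $\textup{gon}_\C(X)\leq 4$. Combined with the hypotheses that $X$ is neither hyperelliptic nor trigonal, this forces $\textup{gon}_\C(X)=4$, so there exists a non-constant morphism $f:X\to\PP^1$ of degree $4$ over $\C$.

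Next, I would translate this morphism into the language of $g_d^r$'s. The morphism $f$ corresponds to a line bundle $L$ of degree $4$ on $X$ with $h^0(L)\geq 2$ (the sections $1$ and $f$ are linearly independent). Picking a divisor $D$ with $\mathcal{O}_X(D)\simeq L$, we obtain $d=\deg D=4$ and $r:=\ell(D)-1\geq 1$. Now I check the three numerical hypotheses of the Green--Lazarsfeld theorem with $p=2$:
\begin{itemize}
    \item $d=4\leq g-1$, which holds because $g\geq 5$;
    \item $r\geq 1$ as just observed;
    \item $d-2r=4-2r\leq 4-2=2=p$.
\end{itemize}

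Finally I would invoke the contrapositive of Theorem \ref{thmGreenLazarsfeld}: the existence of a divisor satisfying these inequalities forces $\beta_{2,2}\neq 0$, contradicting the hypothesis $\beta_{2,2}=0$. Hence no such degree $4$ morphism exists and $\textup{gon}_\C(X)\geq 5$. There is no real obstacle here; the only subtle point is verifying that the divisor coming from a $g_4^1$ genuinely satisfies the parameter constraints in the Green--Lazarsfeld statement, and the bound $g\geq 5$ is exactly what is needed to ensure $d\leq g-1$.
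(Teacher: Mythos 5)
Your proof is correct and is exactly the argument behind the cited result: a degree $4$ map yields a $g_4^1$ with $d=4\leq g-1$ (using $g\geq 5$), $r\geq 1$, and $d-2r\leq 2$, contradicting $\beta_{2,2}=0$ via Theorem \ref{thmGreenLazarsfeld}, with the non-hyperelliptic, non-trigonal hypothesis upgrading $\textup{gon}_\C(X)\leq 4$ to equality. This matches the paper's (cited) proof in approach and detail, so there is nothing to add.
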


Even though the "only if" part is still an open problem, we can say something more about $g_4^1$ from the value of $\beta_{2,2}$.

\begin{thm}[{\cite[Theorem 4.1, 4.4]{Schreyer91}}]\label{schreyer}
    Let $C\subseteq\PP^{g-1}$ be a reduced irreducible canonical curve over $\C$ of genus $g\geq7$. Then $\beta_{2,2}$ has one of the values in the following table.

    \begin{center}
        \begin{tabular}{|c|c|c|c|c|}
            \hline
             $\beta_{2,2}$ & $(g-4)(g-2)$ & $\displaystyle\binom{g-2}{2}-1$ & $g-4$ & $0$\\
             \hline
             \textup{linear series} & $\exists g_3^1$ & $\exists g_6^2\textup{ or } g_8^3$ & $\exists \textup{ a single } g_4^1$ & $\textup{no } g_4^1$\\
             \hline
        \end{tabular}
    \end{center}
\end{thm}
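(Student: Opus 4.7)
The plan is to prove the theorem by identifying $\beta_{2,2}(C)$ with the Koszul cohomology group $K_{2,1}(C, K_C)$ and analyzing the contributions from linear series of low Clifford index on $C$. First, I would invoke the Green--Lazarsfeld non-vanishing construction (the converse-direction partner of Theorem \ref{thmGreenLazarsfeld}), which produces an explicit subspace of $K_{2,1}(C, K_C)$ from every base-point-free $g_d^r$ on $C$ with $d \leq g-1$ and $d - 2r \leq 2$. Clifford's theorem together with the genus-degree formula restricts these series, for $g \geq 7$, to exactly four types: $g_3^1$, $g_4^1$, $g_6^2$, and $g_8^3$. The $g_5^2$ case is excluded because a smooth plane quintic has genus $6$.

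Next, I would compute each contribution geometrically using rational normal scrolls, following Schreyer's setup. A $g_3^1$ places the canonical image on a three-dimensional scroll of degree $g-2$, and the Eagon--Northcott resolution of this scroll, pulled back to $C$, gives $\beta_{2,2} = (g-4)(g-2)$. A single base-point-free $g_4^1$ places $C$ on a four-dimensional scroll whose resolution contributes exactly $g-4$ to $\beta_{2,2}$ in the $(2,2)$-strand. A $g_6^2$ (giving a plane sextic model) or a $g_8^3$ (giving a space octic model) contributes $\binom{g-2}{2} - 1$ via the analogous analysis of the embedded image in $\PP^2$ or $\PP^3$. When none of these series is present, the Green--Lazarsfeld vanishing (Theorem \ref{thmGreenLazarsfeld}) directly yields $\beta_{2,2} = 0$.

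The main obstacle is proving that these four values are exhaustive and that the four cases are mutually exclusive --- in particular, ruling out intermediate values arising from a curve carrying several independent $g_4^1$'s that would naively sum to give new Betti numbers. The key input here is Schreyer's geometric syzygy theorem, which identifies the quadratic syzygies of a canonical curve with the rulings of the scrolls swept out by its pencils, together with the classification of curves of Clifford dimension $\geq 2$ by Martens and by Coppens--Martens. These results force a curve carrying multiple base-point-free $g_4^1$'s back into either the trigonal case or the plane sextic / space octic case, so the contributions from distinct pencils do not simply add, and only the four discrete values listed in the table can occur. This geometric dictionary is the technical heart of the argument and occupies the bulk of \cite{Schreyer91}.
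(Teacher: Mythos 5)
The first thing to say is that the paper contains no proof of this statement: it is imported verbatim from \cite[Theorems 4.1 and 4.4]{Schreyer91} (with only a remark translating the indexation, Schreyer's $\beta_{2,4}$ being the paper's $\beta_{2,2}$). So the only meaningful comparison is with Schreyer's original argument, and your sketch is, in outline, a faithful roadmap of it: Green--Lazarsfeld nonvanishing classes attached to special linear series, a scroll/Eagon--Northcott analysis case by case, and a Clifford-type classification showing that for $g\geq7$ only $g_3^1$, $g_4^1$, $g_6^2$, $g_8^3$ can occur (your exclusion of $g_5^2$ because plane quintics have genus $\leq 6$ is the right reason).

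That said, three concrete corrections. First, your scroll dimensions are wrong: by geometric Riemann--Roch a divisor of a base-point-free $g_d^1$ spans a $\PP^{d-2}$ in canonical space, so a $g_3^1$ sweeps out a $2$-dimensional scroll of degree $g-2$ and a $g_4^1$ a $3$-dimensional scroll of degree $g-3$ --- not three- and four-dimensional ones; as literally stated your Eagon--Northcott computations would be carried out on the wrong varieties. Second, the Koszul group computing $\beta_{2,2}$ is $K_{2,2}(C,K_C)$, which by duality is $K_{g-4,1}(C,K_C)^*$, not $K_{2,1}(C,K_C)$; the Green--Lazarsfeld construction applied to a pencil $L_1$ and its residual $K_C\otimes L_1^{-1}$ (with $h^0=2$ and $g-3$ respectively) lands precisely in $K_{g-4,1}$. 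Third, for mutual exclusivity you appeal vaguely to a ``geometric syzygy theorem'' and to Coppens--Martens, but the operative mechanism is more elementary: if $C$ carries two distinct base-point-free $g_4^1$'s, the base-point-free pencil trick gives $h^0(L_1\otimes L_2)\geq 4$, i.e.\ a $g_8^3$ (the map to the quadric $\PP^1\times\PP^1\subseteq\PP^3$), which pushes $C$ into the $\binom{g-2}{2}-1$ column; similarly a bielliptic curve acquires a $g_6^2$ from a plane cubic model of its elliptic quotient. So ``several independent $g_4^1$'s'' is never a separate case and no intermediate Betti values arise. Finally, your last paragraph explicitly outsources the exhaustiveness argument to \cite{Schreyer91} itself, which would be circular in a standalone proof; it is defensible here only because the paper under review does exactly the same, citing rather than proving the theorem.
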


Notice that the indexation of Betti numbers is different here and in \cite{Schreyer91}, where they are indexed as $\beta_{2,4}$.

The existence of a single $g_4^1$ is useful as the following result tells us that in that case we have a rational morphism of degree $\leq4$ to $\PP^1$.

\begin{thm}[{\cite[Theorem 5]{RoeXarles2014}}]
    Let $C/k$ be a smooth projective curve. Suppose that for a fixed $r$ and $d$ there is only one $g_d^r$, giving a morphism $f:C_{k^{\textup{sep}}}\to \PP_{k^{\textup{sep}}}^r$. Then there exists a Brauer-Severi variety $\mathcal{P}$ defined over $k$ together with a $k$-morphism $g:C\to\mathcal{P}$ such that $g\oplus_k k^{\textup{sep}}:C_{k^{\textup{sep}}}\to\mathcal{P}_{k^{\textup{sep}}}\cong\PP_{k^{\textup{sep}}}^r$ is equal to $f$.
\end{thm}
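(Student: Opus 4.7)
The plan is to carry out Galois descent. Set $G = \Gal(k^{\textup{sep}}/k)$. Since the hypothesis asserts that there is a unique $g_d^r$ on $C_{k^{\textup{sep}}}$, for every $\sigma\in G$ the Galois conjugate linear system $\sigma(g_d^r)$ must coincide with $g_d^r$. The morphisms $f$ and ${}^\sigma\!f$ therefore realize the same linear system, and since a linear system determines its associated morphism to projective space only up to a change of basis, there is a unique $\phi_\sigma \in \operatorname{PGL}_{r+1}(k^{\textup{sep}})$ with ${}^\sigma\!f = \phi_\sigma\circ f$.

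First I would verify the cocycle identity $\phi_{\sigma\tau} = \phi_\sigma \cdot {}^\sigma\!\phi_\tau$, which follows directly from unwinding ${}^{\sigma\tau}\!f = {}^\sigma({}^\tau\!f)$ and invoking uniqueness of $\phi_\bullet$. Consequently $[\phi] \in H^1\!\left(G,\operatorname{PGL}_{r+1}(k^{\textup{sep}})\right)$ is a well-defined cohomology class.

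Next I would invoke the classical correspondence between $H^1\!\left(G,\operatorname{PGL}_{r+1}(k^{\textup{sep}})\right)$ and isomorphism classes of $r$-dimensional Brauer-Severi varieties over $k$: the class $[\phi]$ corresponds to a Brauer-Severi variety $\mathcal{P}/k$ together with a $k^{\textup{sep}}$-isomorphism $\mathcal{P}_{k^{\textup{sep}}}\cong\PP^r_{k^{\textup{sep}}}$ whose descent datum is precisely $\phi$. By construction, $f$ is equivariant with respect to this descent datum, so effectivity of Galois descent for morphisms to quasi-projective schemes produces a unique $k$-morphism $g:C\to\mathcal{P}$ whose base change to $k^{\textup{sep}}$ recovers $f$.

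The main obstacle is to handle carefully the fact that a linear system determines the associated morphism to projective space only up to the action of $\operatorname{PGL}$, not $\operatorname{GL}$: the Galois cocycle lives naturally in $\operatorname{PGL}_{r+1}$, and the twist of $\PP^r_{k^{\textup{sep}}}$ by this cocycle is precisely a Brauer-Severi variety that need not be isomorphic to $\PP^r_k$ unless the class is trivial (equivalently, unless $\mathcal{P}(k)\neq\emptyset$). This is the intrinsic reason why the theorem cannot in general upgrade the conclusion to a $k$-morphism landing in $\PP^r_k$.
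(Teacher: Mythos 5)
The paper does not prove this statement at all --- it is imported verbatim from Ro\'e--Xarles \cite{RoeXarles2014}, so there is no internal proof to compare against; your Galois-descent argument is, however, exactly the standard proof of that theorem and matches the cited source in substance. The outline is correct, and in particular your closing observation (the cocycle class is precisely the Brauer obstruction, which trivializes exactly when $\mathcal{P}(k)\neq\emptyset$) is the point the paper exploits in its Corollary 5.7 for $r=1$, where a form of $\PP^1$ is a conic. Three details deserve explicit attention. First, uniqueness of $\phi_\sigma$ is not automatic from ``linear systems determine morphisms up to change of basis'': it requires that $f$ be non-degenerate, i.e.\ that its image span $\PP^r_{k^{\textup{sep}}}$, which does hold here because $f$ is given by an $(r+1)$-dimensional space of sections; without uniqueness the assignment $\sigma\mapsto\phi_\sigma$ would not be well defined and the cocycle computation would collapse. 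Second, with your normalization ${}^\sigma\!f=\phi_\sigma\circ f$, unwinding ${}^{\sigma\tau}\!f={}^\sigma({}^\tau\!f)$ actually gives $\phi_{\sigma\tau}={}^\sigma\!\phi_\tau\circ\phi_\sigma$, the opposite order from the identity you displayed; the repair is trivial (set $\psi_\sigma=\phi_\sigma^{-1}$, which satisfies $\psi_{\sigma\tau}=\psi_\sigma\cdot{}^\sigma\psi_\tau$), but as written the cocycle identity is stated in the wrong order. Third, for the class to live in continuous cohomology $H^1\bigl(G,\operatorname{PGL}_{r+1}(k^{\textup{sep}})\bigr)$ you should note that $f$, being of finite type, is defined over a finite subextension $k'/k$, so $\phi_\sigma=\mathrm{id}$ for all $\sigma$ fixing $k'$ and the cocycle is continuous; effectivity of the descent then holds because the twisted target is projective, and the descended $g:C\to\mathcal{P}$ is the required morphism.
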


\begin{cor}[{\cite[Corollary 5.7]{Orlic2025star}}]\label{unique_g14_cor}
    Let $C/\Q$ be smooth projective curve such that there is only one $g_d^1$. Then there exists a conic $\mathcal{P}$ defined over $\Q$ and a degree $d$ rational map $g:C\to\mathcal{P}$. If additionally $C$ has at least one rational point, then $\mathcal{P}$ is $\Q$-isomorphic to $\PP^1$.
\end{cor}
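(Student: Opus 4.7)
The plan is to deduce this corollary directly from the preceding theorem of Roé and Xarles by specializing to $r=1$ and $k=\Q$, and then translating the statement about Brauer-Severi varieties into the language of conics.

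First, apply the Roé-Xarles theorem with $r=1$ and $k=\Q$ to the curve $C$. Since the unique $g_d^1$ on $C$ determines a morphism $f: C_{\overline\Q}\to \PP^1_{\overline\Q}$ (the uniqueness ensures this morphism is Galois-stable up to the ambiguity captured by the Brauer-Severi twist), the theorem produces a Brauer-Severi variety $\mathcal{P}/\Q$ together with a $\Q$-morphism $g:C\to\mathcal{P}$ whose base change to $\overline\Q$ recovers $f$. In particular $g$ has degree $d$.

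Next, recall the classical fact that a Brauer-Severi variety of dimension one over a field $k$ is exactly a smooth conic: such a variety is a $k$-form of $\PP^1$, and these forms are classified by $H^1(\Gal(\overline k/k),\Aut(\PP^1_{\overline k}))=H^1(\Gal(\overline k/k),\PGL_2)$, which in turn parametrizes smooth conics in $\PP^2_k$. Hence $\mathcal{P}$ is a conic defined over $\Q$, which gives the first assertion.

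For the second assertion, suppose $C(\Q)\neq\emptyset$ and pick $P\in C(\Q)$. Then $g(P)\in\mathcal{P}(\Q)$, so the conic $\mathcal{P}$ has a $\Q$-rational point. It is a standard fact (stereographic projection from a rational point) that any smooth conic over $\Q$ possessing a $\Q$-point is $\Q$-isomorphic to $\PP^1$. This completes the proof. The only potentially subtle point is verifying that a unique $g_d^1$ indeed yields the hypothesis required by the Roé-Xarles theorem (namely that the morphism is unique rather than living in a positive-dimensional family), but this is immediate from the definition of $g_d^1$ as a specific $2$-dimensional subspace of a Riemann-Roch space and the hypothesis that only one such subspace exists.
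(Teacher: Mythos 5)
Your proposal is correct and matches the intended argument: the corollary is exactly the Ro\'e--Xarles theorem specialized to $r=1$, combined with the standard facts that a one-dimensional Brauer--Severi variety over $\Q$ is a smooth conic and that a conic with a rational point (here $g(P)$ for $P\in C(\Q)$) is $\Q$-isomorphic to $\PP^1$. This is the same route as the cited proof, with no gaps.
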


\begin{prop}\label{betti0prop}
    The $\C$-gonality of the quotient curve $X_0(N)/W_N$ is at least $5$ for the following pairs $(N,W_N)$. Here $g$ is the genus of $X_0(N)/W_N$.

    \begin{center}
\begin{longtable}{|c|c|c||c|c|c||c|c|c||c|c|c|}

\hline
\addtocounter{table}{-1}
$N$ & $W_N$ & $g$ & $N$ & $W_N$ & $g$ & $N$ & $W_N$ & $g$ & $N$ & $W_N$ & $g$\\
    \hline
$210$ & $\left<w_2,w_7\right>$ & $9$ & $210$ & $\left<w_5,w_7\right>$ & $7$ & $210$ & $\left<w_2,w_{35}\right>$ & $8$ & $210$ & $\left<w_3,w_{14}\right>$ & $9$\\
\hline
$210$ & $\left<w_3,w_{35}\right>$ & $7$ & $210$ & $\left<w_5,w_6\right>$ & $9$ & $210$ & $\left<w_5,w_{14}\right>$ & $8$ & $210$ & $\left<w_5,w_{21}\right>$ & $8$\\
\hline
$210$ & $\left<w_6,w_{70}\right>$ & $9$ & $210$ & $\left<w_{14},w_{30}\right>$ & $8$ & $210$ & $\left<w_2,w_{105}\right>$ & $9$ & $210$ & $\left<w_5,w_{42}\right>$ & $9$\\
\hline
$228$ & $\left<w_3,w_{19}\right>$ & $9$ & $228$ & $\left<w_{12},w_{19}\right>$ & $8$ & $228$ & $\left<w_{12},w_{57}\right>$ & $8$ & $234$ & $\left<w_2,w_9\right>$ & $9$\\
\hline
$234$ & $\left<w_2,w_{13}\right>$ & $8$ & $234$ & $\left<w_9,w_{13}\right>$ & $8$ & $240$ & $\left<w_3,w_5\right>$ & $8$ & $240$ & $\left<w_5,w_48\right>$ & $9$\\
\hline
$246$ & $\left<w_2,w_{41}\right>$ & $8$ & $246$ & $\left<w_3,w_{41}\right>$ & $7$ & $246$ & $\left<w_3,w_{82}\right>$ & $9$ & $246$ & $\left<w_6,w_{82}\right>$ & $9$\\
\hline
$246$ & $\left<w_6,w_{41}\right>$ & $7$ & $246$ & $\left<w_2,w_{123}\right>$ & $7$ & $258$ & $\left<w_3,w_{43}\right>$ & $9$ & $258$ & $\left<w_3,w_{86}\right>$ & $7$\\
\hline
$258$ & $\left<w_2,w_{129}\right>$ & $8$ & $260$ & $\left<w_{13},w_{20}\right>$ & $8$ & $260$ & $\left<w_5,w_{52}\right>$ & $8$ & $264$ & $\left<w_3,w_8\right>$ & $9$\\
\hline
$264$ & $\left<w_3,w_{88}\right>$ & $9$ & $270$ & $\left<w_5,w_{27}\right>$ & $8$ & $270$ & $\left<w_5,w_{54}\right>$ & $8$ & $270$ & $\left<w_2,w_{135}\right>$ & $7$\\
\hline
$273$ & $\left<w_7,w_{13}\right>$ & $9$ & $273$ & $\left<w_7,w_{39}\right>$ & $8$ & $273$ & $\left<w_{21},w_{39}\right>$ & $9$ & $273$ & $\left<w_{13},w_{21}\right>$ & $8$\\
\hline
$280$ & $\left<w_7,w_8\right>$ & $9$ & $280$ & $\left<w_7,w_{40}\right>$ & $9$ & $290$ & $\left<w_2,w_{29}\right>$ & $9$ & $290$ & $\left<w_5,w_{29}\right>$ & $8$\\
\hline
$290$ & $\left<w_{10},w_{29}\right>$ & $7$ & $290$ & $\left<w_5,w_{58}\right>$ & $8$ & $290$ & $\left<w_2,w_{145}\right>$ & $7$ & $294$ & $\left<w_2,w_{49}\right>$ & $9$\\
\hline
$294$ & $\left<w_3,w_{49}\right>$ & $9$ & $294$ & $\left<w_6,w_{49}\right>$ & $9$ & $294$ & $\left<w_3,w_{98}\right>$ & $7$ & $294$ & $\left<w_2,w_{147}\right>$ & $8$\\
\hline
$306$ & $\left<w_{17},w_{18}\right>$ & $9$ & $308$ & $\left<w_4,w_7\right>$ & $9$ & $308$ & $\left<w_7,w_{44}\right>$ & $8$ & $308$ & $\left<w_4,w_{77}\right>$ & $9$\\
\hline
$308$ & $\left<w_{11},w_{28}\right>$ & $9$ & $310$ & $\left<w_2,w_{31}\right>$ & $9$ & $310$ & $\left<w_{10},w_{31}\right>$ & $8$ & $310$ & $\left<w_{10},w_{62}\right>$ & $9$\\
\hline
$310$ & $\left<w_2,w_{155}\right>$ & $8$ & $312$ & $\left<w_3,w_{13}\right>$ & $9$ & $312$ & $\left<w_8,w_{39}\right>$ & $8$ & $312$ & $\left<w_3,w_{104}\right>$ & $9$\\
\hline
$318$ & $\left<w_3,w_{53}\right>$ & $7$ & $318$ & $\left<w_6,w_{106}\right>$ & $8$ & $318$ & $\left<w_2,w_{159}\right>$ & $7$ & $322$ & $\left<w_7,w_{23}\right>$ & $9$\\
\hline
$322$ & $\left<w_{14},w_{46}\right>$ & $9$ & $322$ & $\left<w_2,w_{161}\right>$ & $9$ & $342$ & $\left<w_{18},w_{38}\right>$ & $9$ & $345$ & $\left<w_{15},w_{69}\right>$ & $9$\\
\hline
$345$ & $\left<w_5,w_{69}\right>$ & $8$ & $348$ & $\left<w_{12},w_{87}\right>$ & $7$ & $350$ & $\left<w_{14},w_{50}\right>$ & $9$ & $350$ & $\left<w_2,w_{175}\right>$ & $8$\\
\hline
$354$ & $\left<w_6,w_{59}\right>$ & $8$ & $366$ & $\left<w_6,w_{122}\right>$ & $9$ & $370$ & $\left<w_{10},w_{74}\right>$ & $9$ & $374$ & $\left<w_{17},w_{22}\right>$ & $9$\\
\hline$374$ & $\left<w_2,w_{187}\right>$ & $8$ & $385$ & $\left<w_{11},w_{35}\right>$ & $9$ & $385$ & $\left<w_{35},w_{55}\right>$ & $8$ & $385$ & $\left<w_7,w_{55}\right>$ & $9$\\
\hline
\caption*{}
\end{longtable}
\end{center}
\end{prop}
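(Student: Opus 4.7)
The plan is to invoke Corollary \ref{betti0cor}, which reduces the statement to two verifications for each listed pair $(N,W_N)$: (i) the graded Betti number $\beta_{2,2}$ of the canonical embedding vanishes, and (ii) the curve is neither hyperelliptic nor trigonal. Every curve in the table has genus $g\in\{7,8,9\}$, so both the hypothesis $g\geq 5$ of the corollary and the hypothesis $g\geq 7$ of Schreyer's Theorem \ref{schreyer} are satisfied.

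The first step is to construct a canonical model of each $X_0(N)/W_N$ in \texttt{Magma}. Concretely, one takes the space of weight $2$ cusp forms on $\Gamma_0(N)$, restricts to the subspace fixed by $W_N$ (which is naturally identified with the regular differentials on the quotient), and uses a basis of this subspace to embed the quotient canonically in $\PP^{g-1}$. From the homogeneous ideal of this embedding, \texttt{Magma}'s minimal free resolution routines produce the graded Betti table, and $\beta_{2,2}$ is read off directly. The assertion to verify is that $\beta_{2,2}=0$ for every entry of the table.

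Once $\beta_{2,2}=0$ has been confirmed, the remaining hypotheses of Corollary \ref{betti0cor} are automatic. Trigonality is ruled out by Schreyer's Theorem \ref{schreyer}: a trigonal curve of genus $g\geq 7$ satisfies $\beta_{2,2}=(g-4)(g-2)\neq 0$, contradicting the computation. Hyperellipticity is ruled out by the Furumoto--Hasegawa classification of hyperelliptic quotients \cite{FurumotoHasegawa1999}, since none of the pairs $(N,W_N)$ in the proposition appear in that list (and a successful canonical embedding of the quotient as a smooth curve in $\PP^{g-1}$ already furnishes an independent check). Applying Corollary \ref{betti0cor} then yields $\gon_\C(X_0(N)/W_N)\geq 5$.

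The principal obstacle is practical rather than conceptual: for the genus $9$ cases the canonical ideal lives in $\PP^8$, and computing a minimal free resolution via Gröbner bases can be memory- and time-intensive. If the resolution over $\Q$ is too costly, a standard workaround is to reduce modulo a prime $p$ of good reduction; by semicontinuity of Betti numbers under specialization, $\beta_{2,2}=0$ on the fiber over $\F_p$ forces $\beta_{2,2}=0$ in characteristic zero, which is enough for the gonality lower bound. The sheer number of pairs in the table means that some care in bookkeeping and in choosing efficient bases for $W_N$-invariants will be required, but each individual computation fits within the framework already used earlier in the paper.
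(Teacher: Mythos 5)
Your proposal matches the paper's proof: the paper likewise computes $\beta_{2,2}=0$ for each listed pair (via \texttt{Magma}'s BettiNumber function, with the caveat about its different indexation of $\beta_{i,j}$), notes the curves are neither hyperelliptic nor trigonal by \cite{FurumotoHasegawa1999,HasegawaShimura2006}, and concludes via Corollary \ref{betti0cor}. Your only deviations are cosmetic and sound --- ruling out trigonality internally via Theorem \ref{schreyer} instead of citing \cite{HasegawaShimura2006}, and the (correct) upper-semicontinuity fallback of verifying $\beta_{2,2}=0$ modulo a prime of good reduction.
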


\begin{proof}
    For all these curves we compute that $\beta_{2,2}=0$. We know from \cite{FurumotoHasegawa1999,HasegawaShimura2006} that these curves are neither hyperelliptic nor trigonal. Thus by Corollary \ref{betti0cor} we conclude that they are not tetragonal over $\C$.

    The computations for $g=7$ curves took only several seconds, for $g=8$ up to several minutes, and for $g=9$ they could take several hours. The computation for the genus $9$ curve $X_0(234)/\left<w_2,w_9\right>$ took around $4$ hours. 
    
    The Betti numbers were computed with the \texttt{Magma} function BettiNumber(\_,2,4). The indexation of $\beta_{i,j}$ in that function is different from that in this paper. It is the same indexation as in \cite{Schreyer1986} and \cite{Schreyer91}, and the reader can consult \cite[Table 1]{Schreyer1986} to check this result.
\end{proof}

\begin{prop}\label{betti_positive_prop}
    The quotient curve $X_0(N)/W_N$ has $\Q$ and $\C$-gonality equal to $4$ for the following pairs $(N,W_N)$:

    \begin{center}
\begin{longtable}{|c|c|c|c||c|c|c|c||c|c|c|c|}

\hline
\addtocounter{table}{-1}
$N$ & $W_N$ & $g$ & $\beta_{2,2}$ & $N$ & $W_N$ & $g$ & $\beta_{2,2}$ & $N$ & $W_N$ & $g$ & $\beta_{2,2}$\\
    \hline
$228$ & $\left<w_3,w_{76}\right>$ & $8$ & $4$ & $240$ & $\left<w_5,w_{16}\right>$ & $8$ & $4$ & $240$ & $\left<w_{15},w_{16}\right>$ & $7$ & $3$\\
\hline
$258$ & $\left<w_2,w_{43}\right>$ & $8$ & $4$ & $258$ & $\left<w_6,w_{86}\right>$ & $7$ & $3$ & $264$ & $\left<w_8,w_{33}\right>$ & $8$ & $4$\\
\hline
$264$ & $\left<w_{11},w_{24}\right>$ & $8$ & $4$ & $273$ & $\left<w_3,w_7\right>$ & $8$ & $4$ & $273$ & $\left<w_3,w_{13}\right>$ & $8$ & $4$\\
\hline
$280$ & $\left<w_{35},w_{56}\right>$ & $8$ & $4$ & $280$ & $\left<w_5,w_{56}\right>$ & $8$ & $4$ & & & &\\
\hline
\caption*{}
\end{longtable}
\end{center}
\end{prop}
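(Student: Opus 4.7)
The plan is to invoke Schreyer's classification (\Cref{schreyer}) in the regime where a single $g_4^1$ exists, and then to promote this $\C$-level information to a statement over $\Q$ using \Cref{unique_g14_cor} together with the existence of a rational cusp. First, for each pair $(N,W_N)$ in the table I would compute the graded Betti number $\beta_{2,2}$ with \texttt{Magma} using \texttt{BettiNumber(\_,2,4)} applied to a canonical model of $X_0(N)/W_N$, exactly as in the proof of \Cref{betti0prop}. For the listed curves the genus is either $g=7$ or $g=8$, and in both cases the value appearing in the table, namely $\beta_{2,2}=g-4$, is precisely the third column in Schreyer's table: indeed, for $g=7$ we have $(g-4)(g-2)=15$, $\binom{g-2}{2}-1=9$, $g-4=3$, and for $g=8$ these invariants are $24$, $14$, $4$, so the observed $\beta_{2,2}$ is unambiguous.

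Next, applying \Cref{schreyer} I conclude that each such curve carries a \emph{unique} $g_4^1$, and in particular has $\C$-gonality at most $4$. Combined with the facts that none of the quotients in the list appear in the classifications of hyperelliptic quotients by Furumoto--Hasegawa or of $\C$-trigonal quotients by Hasegawa--Shimura, this forces $\textup{gon}_\C(X_0(N)/W_N)=4$.

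To pass from $\C$ to $\Q$, I would apply \Cref{unique_g14_cor}: since there is a single $g_4^1$, this linear system corresponds to a degree $4$ rational map from $X_0(N)/W_N$ to a conic $\mathcal{P}$ defined over $\Q$. Each $X_0(N)/W_N$ has at least one $\Q$-rational cusp, so $\mathcal{P}\cong_\Q\PP^1$ and we obtain a $\Q$-rational morphism of degree $4$ to $\PP^1$. Hence $\textup{gon}_\Q(X_0(N)/W_N)\leq 4$, while $\textup{gon}_\Q\geq\textup{gon}_\C=4$ by \Cref{poonen}(i), giving equality.

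The only practically delicate step is the Betti number computation itself: for genus $8$ curves the canonical ideal is of moderate size and the Gr\"obner-type computation underlying \texttt{BettiNumber} can be memory-intensive, so I would work from the Atkin--Lehner quotient models already used elsewhere in the paper and run the computations on the Euler server. Once $\beta_{2,2}$ is successfully produced for each pair, the remaining arguments are a direct invocation of Schreyer's theorem and \Cref{unique_g14_cor}, so this is where all the real work is concentrated.
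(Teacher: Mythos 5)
Your proposal is correct and follows essentially the same route as the paper: compute $\beta_{2,2}=g-4$, invoke Schreyer's theorem (\Cref{schreyer}, applicable since $g\geq7$) to get a unique $g_4^1$, and use \Cref{unique_g14_cor} plus the rational cusp to descend the degree $4$ map to $\Q$, with \cite{FurumotoHasegawa1999, HasegawaShimura2006} ruling out gonality $\leq 3$. Your explicit check that the value $g-4$ is distinguishable from the other entries in Schreyer's table ($15,9,3$ for $g=7$; $24,14,4$ for $g=8$) is a worthwhile sanity verification that the paper leaves implicit.
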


\begin{proof}
    For these curves we computed that $\beta_{2,2}=g-4$. Thus there exists a unique $g_4^1$ by Theorem \ref{schreyer}. Since all quotients of $X_0(N)$ have a rational cusp, Corollary \ref{unique_g14_cor} now tells us that we have a rational map to $\PP^1$ of degree $4$. Therefore, all these curves have $\Q$ and $\C$-gonality $4$ as they are not hyperelliptic nor trigonal by \cite{FurumotoHasegawa1999, HasegawaShimura2006}.
\end{proof}

\section{Castelnuovo-Severi inequality}\label{CSsection}

This is a very useful tool for producing a lower bound on the $\C$-gonality (see \cite[Theorem 3.11.3]{Stichtenoth09} for a proof).
\begin{prop}[Castelnuovo-Severi inequality] 
\label{tm:CS}
Let $k$ be a perfect field, and let $X,\ Y, \ Z$ be curves over $k$ with genera $g(X), g(Y), g(Z)$. Let non-constant morphisms $\pi_Y:X\rightarrow Y$ and $\pi_Z:X\rightarrow Z$ over $k$ be given, and let their degrees be $m$ and $n$, respectively. Assume that there is no morphism $X\rightarrow X'$ of degree $>1$ through which both $\pi_Y$ and $\pi_Z$ factor. Then the following inequality holds:
\begin{equation} \label{eq:CS}
g(X)\leq m \cdot g(Y)+n\cdot g(Z) +(m-1)(n-1).
\end{equation}
\end{prop}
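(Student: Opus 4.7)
The plan is to pass from $X$ to the product surface $Y \times_k Z$ and exploit the adjunction formula there. Consider the morphism $\phi = (\pi_Y, \pi_Z)\colon X \to Y \times_k Z$, and let $C$ denote its (reduced) image, a curve on the smooth projective surface $Y \times Z$. The first step is to verify that $\phi\colon X \to C$ is birational. Indeed, if $\phi$ had degree $d > 1$, then the normalization $\widetilde{C}$ of $C$ would fit into a factorization $X \to \widetilde{C} \to C$ through which both $\pi_Y$ and $\pi_Z$ factor (via the projections $p_Y, p_Z\colon C \to Y, Z$), contradicting the hypothesis that no such intermediate $X'$ exists. In particular, $X$ is the normalization of $C$, so $g(X) \leq p_a(C)$.

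The second step is a computation of $p_a(C)$ via adjunction. Let $F_Y = \{y\}\times Z$ and $F_Z = Y\times\{z\}$ denote the two ruling classes on $Y\times Z$, so that $F_Y^2 = F_Z^2 = 0$ and $F_Y\cdot F_Z = 1$. From $\Omega^1_{Y\times Z} = p_Y^*\Omega^1_Y \oplus p_Z^*\Omega^1_Z$ one obtains $K_{Y\times Z} \equiv (2g(Y)-2)F_Y + (2g(Z)-2)F_Z$. Because $\phi$ is birational onto $C$, the projections $C \to Y$ and $C \to Z$ inherit the degrees $m$ and $n$ respectively; reading this off from $C\cdot F_Y$ and $C\cdot F_Z$ yields the numerical class $C \equiv nF_Y + mF_Z$. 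Plugging into the adjunction formula
\[
2p_a(C) - 2 \;=\; C\cdot(C + K_{Y\times Z})
\]
and simplifying gives $p_a(C) = m\,g(Y) + n\,g(Z) + (m-1)(n-1)$, which combined with $g(X) \leq p_a(C)$ is the desired inequality.

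I expect the main subtlety to lie in the first step, namely translating the hypothesis on common factorizations into the birationality of $\phi$. Phrased in function fields, the point is that the compositum $\pi_Y^* k(Y)\cdot \pi_Z^* k(Z)$ inside $k(X)$ coincides with $\phi^* k(C)$; any nontrivial intermediate field would correspond to a common intermediate curve, which is forbidden. A secondary technicality is that the statement is over a perfect field $k$ rather than $\overline{k}$; fortunately the image $C$ and the fiber classes $F_Y, F_Z$ all descend to $k$, and the genera of smooth projective curves are stable under base change, so one may either run the intersection-theoretic argument over $k$ directly or prove the inequality on $(Y\times Z)_{\overline{k}}$ and descend.
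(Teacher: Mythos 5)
Your proof is correct, but it is not the argument the paper relies on: the paper gives no proof at all and instead cites Stichtenoth's Theorem 3.11.3, where the inequality is established purely in the language of function fields. There one works with the compositum $F = F_1F_2$ inside $k(X)$ (your hypothesis that no common factorization exists is exactly the statement $\pi_Y^*k(Y)\cdot\pi_Z^*k(Z) = k(X)$, as you observe) and bounds the genus of $F$ by Riemann--Roch estimates on divisors of poles, with no surface geometry anywhere. Your route is the classical geometric one: embed (birationally) $X$ into $Y\times_k Z$ via $\phi=(\pi_Y,\pi_Z)$, compute the numerical class $C\equiv nF_Y+mF_Z$ from $C\cdot F_Y=m$, $C\cdot F_Z=n$, and apply adjunction; the computation $2p_a(C)-2 = C\cdot(C+K_{Y\times Z}) = 2mn + m(2g(Y)-2)+n(2g(Z)-2)$ does yield $p_a(C)= m\,g(Y)+n\,g(Z)+(m-1)(n-1)$, and $g(X)\le p_a(C)$ since $X$ is the normalization of $C$. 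Your handling of the two delicate points is also sound: the factorization $X\to\widetilde{C}\to C$ through the normalization is a legitimate common factorization, so the hypothesis forces $\deg\phi=1$; and since birationality of $\phi$ is established over $k$ and is preserved under base change, one may run the intersection theory over $\overline{k}$ without needing the factorization hypothesis there (note that $C$ is geometrically integral because $X$ is, which is what makes this base change harmless). The trade-off between the two approaches: the function-field proof avoids intersection theory and adjunction entirely and works uniformly over any constant field, which is why a reference like Stichtenoth states it that way; your proof is shorter and more transparent conceptually, at the cost of importing adjunction on the smooth surface $Y\times Z$, the inequality $g(\widetilde{C})\le p_a(C)$ for integral curves on smooth surfaces, and the descent bookkeeping over a non-closed perfect field.
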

As we can see, Castelnuovo-Severi inequality is useful when we have a low degree morphism from a curve $X$ of high genus to a curve $Y$ of low genus. We will use this in the following results.

\begin{prop}\label{CSprop_star}
The $\C$-gonality of the quotient curve $X_0(N)/W_N$ is at least $5$ for the following pairs $(N,W_N)$. Here $g$ denotes the genus of $X_0(N)/W_N$ and $g^*$ denotes the genus of the curve $X_0^*(N)$.

    \begin{center}
\begin{longtable}{|c|c|c|c||c|c|c|c||c|c|c|c|}

\hline
\addtocounter{table}{-1}
$N$ & $W_N$ & $g$ & $g^*$ & $N$ & $W_N$ & $g$ & $g^*$ & $N$ & $W_N$ & $g$ & $g^*$\\
    \hline
$240$ & $\left<w_3,w_{16}\right>$ & $10$ & $3$ & $246$ & $\left<w_2,w_3\right>$ & $10$ & $3$ & $258$ & $\left<w_2,w_3\right>$ & $10$ & $3$\\
\hline
$258$ & $\left<w_6,w_{43}\right>$ & $10$ & $3$ & $270$ & $\left<w_2,w_5\right>$ & $11$ & $3$ & $270$ & $\left<w_2,w_{27}\right>$ & $10$ & $3$\\
\hline
$270$ & $\left<w_{10},w_{27}\right>$ & $10$ & $3$ & $282$ & $\left<w_2,w_3\right>$ & $12$ & $3$ & $282$ & $\left<w_6,w_{94}\right>$ & $11$ & $3$\\
\hline$282$ & $\left<w_3,w_{94}\right>$ & $11$ & $3$ & $282$ & $\left<w_2,w_{141}\right>$ & $10$ & $3$ & $290$ & $\left<w_2,w_5\right>$ & $10$ & $3$\\
\hline
$290$ & $\left<w_{10},w_{58}\right>$ & $10$ & $3$ & $294$ & $\left<w_2,w_3\right>$ & $10$ & $3$ & $310$ & $\left<w_2,w_5\right>$ & $12$ & $3$\\
\hline
$310$ & $\left<w_5,w_{62}\right>$ & $11$ & $3$ & $312$ & $\left<w_3,w_8\right>$ & $13$ & $3$ & $312$ & $\left<w_8,w_{13}\right>$ & $10$ & $3$\\
\hline
$312$ & $\left<w_{13},w_{23}\right>$ & $12$ & $3$ & $318$ & $\left<w_2,w_3\right>$ & $13$ & $3$ & $318$ & $\left<w_2,w_{53}\right>$ & $12$ & $3$\\
\hline
$318$ & $\left<w_3,w_{106}\right>$ & $12$ & $3$ & $318$ & $\left<w_6,w_{53}\right>$ & $10$ & $3$ & $322$ & $\left<w_2,w_{23}\right>$ & $12$ & $4$\\
\hline
$342$ & $\left<w_2,w_9\right>$ & $13$ & $4$ & $342$ & $\left<w_2,w_{19}\right>$ & $12$ & $4$ & $348$ & $\left<w_3,w_{29}\right>$ & $10$ & $3$\\
\hline
$348$ & $\left<w_{12},w_{29}\right>$ & $13$ & $3$ & $348$ & $\left<w_3,w_{116}\right>$ & $10$ & $3$ & $350$ & $\left<w_2,w_7\right>$ & $12$ & $4$\\
\hline
$350$ & $\left<w_2,w_{25}\right>$ & $13$ & $4$ & $354$ & $\left<w_2,w_3\right>$ & $14$ & $3$ & $354$ & $\left<w_6,w_{118}\right>$ & $14$ & $3$\\
\hline
$354$ & $\left<w_3,w_{118}\right>$ & $13$ & $3$ & $354$ & $\left<w_2,w_{177}\right>$ & $12$ & $3$ & $366$ & $\left<w_2,w_3\right>$ & $15$ & $4$\\
\hline
$366$ & $\left<w_2,w_{61}\right>$ & $13$ & $4$ & $366$ & $\left<w_6,w_{61}\right>$ & $14$ & $4$ & $370$ & $\left<w_2,w_5\right>$ & $13$ & $4$\\
\hline
$370$ & $\left<w_5,w_{37}\right>$ & $12$ & $4$ & $370$ & $\left<w_{10},w_{37}\right>$ & $12$ & $4$ & $374$ & $\left<w_2,w_{11}\right>$ & $13$ & $4$\\
\hline
$374$ & $\left<w_2,w_{17}\right>$ & $12$ & $4$ & $374$ & $\left<w_{22},w_{34}\right>$ & $12$ & $4$ & $378$ & $\left<w_2,w_7\right>$ & $15$ & $5$\\
\hline
$385$ & $\left<w_7,w_{11}\right>$ & $12$ & $4$ & $396$ & $\left<w_4,w_9\right>$ & $15$ & $5$ & $402$ & $\left<w_2,w_3\right>$ & $16$ & $5$\\
\hline
$402$ & $\left<w_3,w_{67}\right>$ & $15$ & $5$ & $402$ & $\left<w_6,w_{67}\right>$ & $15$ & $5$ & $406$ & $\left<w_2,w_7\right>$ & $14$ & $5$\\
\hline
$406$ & $\left<w_2,w_{29}\right>$ & $15$ & $5$ & $410$ & $\left<w_2,w_5\right>$ & $14$ & $5$ & $410$ & $\left<w_{10},w_{82}\right>$ & $14$ & $5$\\
\hline
$414$ & $\left<w_2,w_9\right>$ & $17$ & $5$ & $414$ & $\left<w_2,w_{23}\right>$ & $14$ & $5$ & $414$ & $\left<w_9,w_{46}\right>$ & $15$ & $5$\\
\hline
$418$ & $\left<w_2,w_{11}\right>$ & $14$ & $5$ & $418$ & $\left<w_{11},w_{38}\right>$ & $14$ & $5$ & $435$ & $\left<w_3,w_5\right>$ & $14$ & $5$\\
\hline
$435$ & $\left<w_{15},w_{87}\right>$ & $15$ & $5$ & $438$ & $\left<w_2,w_3\right>$ & $17$ & $5$ & $438$ & $\left<w_2,w_{73}\right>$ & $14$ & $5$\\
\hline
$438$ & $\left<w_3,w_{73}\right>$ & $15$ & $5$ & $438$ & $\left<w_6,w_{73}\right>$ & $17$ & $5$ & $438$ & $\left<w_2,w_{219}\right>$ & $14$ & $5$\\
\hline
$440$ & $\left<w_5,w_8\right>$ & $16$ & $5$ & $440$ & $\left<w_8,w_{11}\right>$ & $17$ & $5$ & $440$ & $\left<w_5,w_{88}\right>$ & $14$ & $5$\\
\hline
$442$ & $\left<w_2,w_{17}\right>$ & $14$ & $5$ & $442$ & $\left<w_{13},w_{34}\right>$ & $14$ & $5$ & $444$ & $\left<w_{12},w_{37}\right>$ & $16$ & $5$\\
\hline
$444$ & $\left<w_3,w_{148}\right>$ & $16$ & $5$ & $456$ & $\left<w_3,w_8\right>$ & $18$ & $7$ & $456$ & $\left<w_3,w_{19}\right>$ & $19$ & $7$\\
\hline
$465$ & $\left<w_3,w_5\right>$ & $15$ & $5$ & $465$ & $\left<w_3,w_{31}\right>$ & $16$ & $5$ & $465$ & $\left<w_5,w_{93}\right>$ & $14$ & $5$\\
\hline
$470$ & $\left<w_2,w_5\right>$ & $17$ & $6$ & $470$ & $\left<w_2,w_{47}\right>$ & $16$ & $6$ & $470$ & $\left<w_5,w_{47}\right>$ & $16$ & $6$\\
\hline
$494$ & $\left<w_2,w_{13}\right>$ & $17$ & $5$ & $494$ & $\left<w_2,w_{19}\right>$ & $16$ & $5$ & $494$ & $\left<w_{13},w_{19}\right>$ & $14$ & $5$\\
\hline
$494$ & $\left<w_{19},w_{26}\right>$ & $14$ & $5$ & $495$ & $\left<w_5,w_9\right>$ & $17$ & $5$ & $495$ & $\left<w_5,w_{11}\right>$ & $15$ & $5$\\
\hline
$504$ & $\left<w_7,w_8\right>$ & $19$ & $7$ & $504$ & $\left<w_8,w_9\right>$ & $21$ & $7$ & $520$ & $\left<w_5,w_{13}\right>$ & $20$ & $8$\\
\hline
$528$ & $\left<w_3,w_{11}\right>$ & $22$ & $9$ & $528$ & $\left<w_3,w_{16}\right>$ & $22$ & $9$ & $555$ & $\left<w_3,w_5\right>$ & $19$ & $5$\\
\hline
$555$ & $\left<w_5,w_{37}\right>$ & $15$ & $5$ & $555$ & $\left<w_3,w_{37}\right>$ & $15$ & $5$ & $555$ & $\left<w_{15},w_{37}\right>$ & $17$ & $5$\\
\hline
$558$ & $\left<w_2,w_9\right>$ & $23$ & $7$ & $558$ & $\left<w_2,w_{13}\right>$ & $21$ & $7$ & $558$ & $\left<w_9,w_{62}\right>$ & $19$ & $7$\\
\hline
$560$ & $\left<w_7,w_{16}\right>$ & $22$ & $9$ & $560$ & $\left<w_5,w_7\right>$ & $22$ & $9$ & $564$ & $\left<w_3,w_{47}\right>$ & $16$ & $6$\\
\hline
$564$ & $\left<w_{12},w_{141}\right>$ & $16$ & $6$ & $564$ & $\left<w_3,w_{188}\right>$ & $19$ & $6$ & $572$ & $\left<w_{11},w_{52}\right>$ & $17$ & $5$\\
\hline
$572$ & $\left<w_{13},w_{44}\right>$ & $18$ & $5$ & $574$ & $\left<w_2,w_7\right>$ & $21$ & $5$ & $574$ & $\left<w_2,w_{41}\right>$ & $18$ & $5$\\
\hline
$574$ & $\left<w_{14},w_{41}\right>$ & $17$ & $5$ & $574$ & $\left<w_7,w_{82}\right>$ & $18$ & $5$ & $595$ & $\left<w_5,w_7\right>$ & $16$ & $5$\\
\hline
$595$ & $\left<w_5,w_{17}\right>$ & $18$ & $5$ & $595$ & $\left<w_{17},w_{35}\right>$ & $14$ & $5$ &  $595$ & $\left<w_7,w_{85}\right>$ & $16$ & $5$\\
\hline
$598$ & $\left<w_2,w_{13}\right>$ & $21$ & $6$ & $598$ & $\left<w_2,w_{23}\right>$ & $18$ & $6$ & $598$ & $\left<w_{23},w_{26}\right>$ & $17$ & $6$\\
\hline
$598$ & $\left<w_{13},w_{46}\right>$ & $20$ & $6$ & $620$ & $\left<w_5,w_{31}\right>$ & $16$ & $6$ & $620$ & $\left<w_{20},w_{31}\right>$ & $16$ & $6$\\
\hline
$620$ & $\left<w_{20},w_{124}\right>$ & $19$ & $6$ & $620$ & $\left<w_5,w_{124}\right>$ & $19$ & $6$ & $627$ & $\left<w_3,w_{11}\right>$ & $19$ & $6$\\
\hline
$627$ & $\left<w_3,w_{19}\right>$ & $19$ & $6$ & $627$ & $\left<w_{19},w_{33}\right>$ & $17$ & $6$ & $627$ & $\left<w_{11},w_{57}\right>$ & $17$ & $6$\\
\hline
$630$ & $\left<w_2,w_5,w_9\right>$ & $15$ & $5$ & $630$ & $\left<w_2,w_5,w_7\right>$ & $14$ & $5$ & $630$ & $\left<w_2,w_7,w_9\right>$ & $15$ & $5$\\
\hline
$630$ & $\left<w_7,w_9,w_{10}\right>$ & $15$ & $5$ & $645$ & $\left<w_3,w_5\right>$ & $21$ & $5$ & $645$ & $\left<w_3,w_{43}\right>$ & $19$ & $5$\\
\hline
$645$ & $\left<w_5,w_{43}\right>$ & $14$ & $5$ & $645$ & $\left<w_{15},w_{43}\right>$ & $20$ & $5$ & $645$ & $\left<w_5,w_{129}\right>$ & $16$ & $5$\\
\hline
$658$ & $\left<w_2,w_7\right>$ & $24$ & $7$ & $658$ & $\left<w_{14},w_{47}\right>$ & $18$ & $7$ & $658$ & $\left<w_{14},w_{94}\right>$ & $19$ & $7$\\
\hline
$658$ & $\left<w_7,w_{94}\right>$ & $21$ & $7$ & $658$ & $\left<w_2,w_{329}\right>$ & $20$ & $7$ & $663$ & $\left<w_3,w_{13}\right>$ & $21$ & $5$\\
\hline
$663$ & $\left<w_3,w_{17}\right>$ & $17$ & $5$ & $663$ & $\left<w_{13},w_{17}\right>$ & $15$ & $5$ & $663$ & $\left<w_{13},w_{51}\right>$ & $15$ & $5$\\
\hline
$663$ & $\left<w_{39},w_{51}\right>$ & $15$ & $5$ & $663$ & $\left<w_{17},w_{39}\right>$ & $15$ & $5$ & $670$ & $\left<w_2,w_5\right>$ & $25$ & $6$\\
\hline
$670$ & $\left<w_2,w_{67}\right>$ & $22$ & $6$ & $670$ & $\left<w_5,w_{67}\right>$ & $16$ & $6$ & $670$ & $\left<w_{10},w_{67}\right>$ & $24$ & $6$\\
\hline
$670$ & $\left<w_5,w_{134}\right>$ & $20$ & $6$ & $714$ & $\left<w_2,w_3,w_7\right>$ & $17$ & $5$ & $714$ & $\left<w_2,w_3,w_{17}\right>$ & $17$ & $5$\\
\hline
$714$ & $\left<w_2,w_7,w_{51}\right>$ & $16$ & $5$ & $714$ & $\left<w_2,w_{17},w_{21}\right>$ & $14$ & $5$ & $714$ & $\left<w_3,w_7,w_{34}\right>$ & $16$ & $5$\\
\hline
$714$ & $\left<w_3,w_{14},w_{17}\right>$ & $15$ & $5$ & $714$ & $\left<w_6,w_7,w_{34}\right>$ & $17$ & $5$ & $714$ & $\left<w_6,w_{14},w_{17}\right>$ & $16$ & $5$\\
\hline
$770$ & $\left<w_2,w_5,w_7\right>$ & $16$ & $5$ & $770$ & $\left<w_2,w_5,w_{11}\right>$ & $14$ & $5$ & $770$ & $\left<w_2,w_7,w_{11}\right>$ & $17$ & $5$\\
\hline
$770$ & $\left<w_5,w_7,w_{11}\right>$ & $14$ & $5$ & $770$ & $\left<w_2,w_5,w_{77}\right>$ & $14$ & $5$ & $770$ & $\left<w_2,w_{11},w_{35}\right>$ & $14$ & $5$\\
\hline
$770$ & $\left<w_7,w_{10},w_{11}\right>$ & $14$ & $5$ & $770$ & $\left<w_5,w_{14},w_{22}\right>$ & $16$ & $5$ & $770$ & $\left<w_7,w_{10},w_{22}\right>$ & $14$ & $5$\\
\hline
$780$ & $\left<w_3,w_5,w_{13}\right>$ & $16$ & $6$ & $780$ & $\left<w_3,w_5,w_{52}\right>$ & $16$ & $6$ & $780$ & $\left<w_5,w_{12},w_{13}\right>$ & $18$ & $6$\\
\hline
$780$ & $\left<w_3,w_{20},w_{52}\right>$ & $18$ & $6$ & $780$ & $\left<w_{12},w_{13},w_{15}\right>$ & $16$ & $6$ & $780$ & $\left<w_{12},w_{15},w_{39}\right>$ & $16$ & $6$\\
\hline
$798$ & $\left<w_2,w_3,w_7\right>$ & $18$ & $5$ & $798$ & $\left<w_2,w_3,w_{19}\right>$ & $18$ & $5$ & $798$ & $\left<w_2,w_7,w_{19}\right>$ & $15$ & $5$\\
\hline
$798$ & $\left<w_3,w_7,w_{19}\right>$ & $15$ & $5$ & $798$ & $\left<w_2,w_7,w_{57}\right>$ & $14$ & $5$ & $798$ & $\left<w_3,w_7,w_{38}\right>$ & $14$ & $5$\\
\hline
$798$ & $\left<w_3,w_{14},w_{19}\right>$ & $15$ & $5$ & $798$ & $\left<w_6,w_7,w_{19}\right>$ & $15$ & $5$ & $798$ & $\left<w_2,w_{21},w_{57}\right>$ & $17$ & $5$\\
\hline
$798$ & $\left<w_6,w_{14},w_{38}\right>$ & $16$ & $5$ & $910$ & $\left<w_2,w_5,w_7\right>$ & $18$ & $5$ & $910$ & $\left<w_2,w_5,w_{13}\right>$ & $19$ & $5$\\
\hline
$910$ & $\left<w_2,w_7,w_{13}\right>$ & $17$ & $5$ & $910$ & $\left<w_2,w_7,w_65\right>$ & $14$ & $5$ & $910$ & $\left<w_5,w_7,w_{26}\right>$ & $17$ & $5$\\
\hline
$910$ & $\left<w_5,w_{13},w_{14}\right>$ & $19$ & $5$ & $910$ & $\left<w_7,w_{10},w_{13}\right>$ & $18$ & $5$ & $910$ & $\left<w_2,w_{35},w_{65}\right>$ & $18$ & $5$\\
\hline
$910$ & $\left<w_7,w_{10},w_{26}\right>$ & $14$ & $5$ & $910$ & $\left<w_{10},w_{14},w_{26}\right>$ & $14$ & $5$ & $990$ & $\left<w_2,w_5,w_9\right>$ & $25$ & $8$\\
\hline
$990$ & $\left<w_2,w_5,w_{11}\right>$ & $23$ & $8$ & $990$ & $\left<w_2,w_9,w_{11}\right>$ & $23$ & $8$ & $990$ & $\left<w_5,w_9,w_{22}\right>$ & $23$ & $8$\\
\hline
$990$ & $\left<w_5,w_{11},w_{18}\right>$ & $22$ & $8$ & $990$ & $\left<w_9,w_{10},w_{22}\right>$ & $21$ & $8$ & & & &\\

\hline
\caption*{}
\end{longtable}
\end{center}
\end{prop}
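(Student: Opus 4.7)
The plan is to apply the Castelnuovo-Severi inequality (Proposition~\ref{tm:CS}) to the natural quotient map
\[
\pi_Y : X_0(N)/W_N \longrightarrow X_0^*(N),
\]
combined with a hypothetical degree-$4$ morphism $\pi_Z : X_0(N)/W_N \to \PP^1$. For every entry of the table we have $|W_N| = 2^{\omega(N)-1}$, so $\deg \pi_Y = [B(N):W_N] = 2$. Taking $m=2$, $n=4$, $g(Y)=g^*$, $g(Z)=0$, the inequality yields
\[
g(X_0(N)/W_N) \;\leq\; 2g^* + 0 + (2-1)(4-1) \;=\; 2g^* + 3,
\]
conditional on verifying that $\pi_Y$ and $\pi_Z$ do not both factor through a common covering of $X_0(N)/W_N$ of degree $>1$.

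The delicate point is the non-factorization hypothesis. Suppose both maps factor through some intermediate $X_0(N)/W_N \to X'$ of degree $d>1$. Then $d \mid \gcd(m,n) = 2$, forcing $d=2$; hence $X' \cong X_0^*(N)$ and the residual map $X_0^*(N) \to \PP^1$ has degree $2$, i.e.\ $X_0^*(N)$ is hyperelliptic. I would therefore consult the classification of hyperelliptic modular quotients by Furumoto and Hasegawa \cite{FurumotoHasegawa1999} and check that none of the levels $N$ appearing in the table produces a hyperelliptic $X_0^*(N)$. This is consistent with the fact that each such $X_0^*(N)$ has $\C$-gonality $\geq 3$ by \cite{Hasegawa1997, HasegawaShimura2000, Orlic2025star}. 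Once non-hyperellipticity is established, Castelnuovo-Severi applies in the form above.

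A row-by-row numerical check then confirms that $g(X_0(N)/W_N) \geq 2g^* + 4$ in every row of the table (with equality saturated in several instances), contradicting the bound $g \leq 2g^* + 3$. Hence no degree-$4$ morphism to $\PP^1$ exists. Since the relevant quotients $X_0(N)/W_N$ are neither hyperelliptic nor trigonal over $\C$ by \cite{FurumotoHasegawa1999, HasegawaShimura2006}, morphisms of degree $\leq 3$ are independently ruled out, and we conclude $\textup{gon}_\C(X_0(N)/W_N) \geq 5$.

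The main obstacle is organisational rather than conceptual: one has to verify both the arithmetic inequality $g \geq 2g^* + 4$ and the non-hyperellipticity of $X_0^*(N)$ across the entire table. Beyond these checks, the argument is a uniform one-line application of Castelnuovo-Severi and requires no new ideas.
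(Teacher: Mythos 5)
Your proposal is correct and follows essentially the same route as the paper: apply Castelnuovo--Severi to the degree-$2$ quotient map $X_0(N)/W_N\to X_0^*(N)$ and a hypothetical degree-$4$ map to $\PP^1$, note $g>2g^*+3$ in every row so the maps must share a nontrivial factorization, deduce that $X_0^*(N)$ would be hyperelliptic (contradicting the known classification), and rule out degrees $\leq 3$ via the hyperelliptic/trigonal classifications of \cite{FurumotoHasegawa1999,HasegawaShimura2006}. Your explicit verification that the common factorization must have degree $2$ and force $X'\cong X_0^*(N)$ is a correct elaboration of a step the paper leaves implicit.
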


\begin{proof}
    These curves $X_\Delta(N)$ are neither hyperelliptic nor trigonal by \cite{FurumotoHasegawa1999, HasegawaShimura2006}. Therefore, their $\C$-gonality is at least $4$.

    Suppose that there is a degree $4$ morphism from $X_0(N)/W_N$ to $\mathbb{P}^1$ for some pair $(N,W_N)$ from this table. We apply the Castelnuovo-Severi inequality with the degree $2$ quotient map $X_0(N)/W_N\to X_0^*(N)$ and this hypothetical degree $4$ morphism. We can easily check that for all these curves we have \[g>2\cdot g^*+2\cdot0+1\cdot3=2\cdot g^*+3.\]
    Therefore, this degree $4$ morphism to $\PP^1$ would have to factor through $X_0^*(N)$, implying that $X_0^*(N)$ would have to be hyperelliptic. However, \cite{Hasegawa1997} tells us that none of these curves $X_0^*(N)$ are hyperelliptic and we get a contradiction.
\end{proof}

\begin{remark}
    Proposition \ref{CSprop_star} fixes a small typo in \cite{HasegawaShimura2006}. In that paper it is stated that the curve $X_0(270)/\left<w_{10},w_{27}\right>$ is trigonal of genus $7$. However, this curve is of genus $10$ and is not trigonal as we have just seen (apply the Castelnuovo-Severi inequality with the degree $2$ map to $X_0^*(270)$ and the hypothetical degree $3$ map to $\PP^1$). 
    
    Instead, the curve $X_0(270)/\left<w_{10},w_{54}\right>$ is trigonal of genus $7$. We can prove its trigonality by computing the Betti number $\beta_{2,2}=20=(g-4)(g-2)$ which proves the existence of $g_3^1$ by Theorem \ref{schreyer}.
\end{remark}

\begin{prop}\label{CSprop_other}
The $\C$-gonality of the quotient curve $X_0(N)/W_N$ is at least $5$ for the following pairs $(N,W_N)$. Here $g$ denotes the genus of $X_0(N)/W_N$ and $g'$ denotes the genus of $X_0(N)/W_n'$.

    \begin{center}
\begin{longtable}{|c|c|c|c|c||c|c|c|c|c|}

\hline
\addtocounter{table}{-1}
$N$ & $W_N$ & $g$ & $W_N'$ & $g'$ & $N$ & $W_N$ & $g$ & $W_N'$ & $g'$\\
    \hline
$210$ & $\left<w_2,w_3\right>$ & $10$ & $\left<w_2,w_3,w_{35}\right>$ & $3$ & $210$ & $\left<w_2,w_5\right>$ & $10$ & $\left<w_2,w_5,w_7\right>$ & $3$\\
 & $\left<w_3,w_5\right>$ & $10$ & $\left<w_3,w_5,w_7\right>$ & $3$ & & $\left<w_3,w_7\right>$ & $10$ & $\left<w_3,w_5,w_7\right>$ & $3$\\
 & $\left<w_3,w_{1}\right>$ & $11$ & $\left<w_3,w_{10},w_{14}\right>$ & $3$ & & $\left<w_6,w_7\right>$ & $10$ & $\left<w_5,w_6,w_7\right>$ & $3$\\
 & $\left<w_7,w_{10}\right>$ & $11$ & $\left<w_2,w_5,w_7\right>$ & $3$ & & $\left<w_7,w_{15}\right>$ & $10$ & $\left<w_3,w_5,w_7\right>$ & $3$\\
 & $\left<w_{10},w_{42}\right>$ & $10$ & $\left<w_3,w_{10},w_{14}\right>$ & $3$ & & $\left<w_3,w_{70}\right>$ & $10$ & $\left<w_2,w_3,w_{35}\right>$ & $3$\\
 & $\left<w_7,w_{30}\right>$ & $10$ & $\left<w_5,w_6,w_7\right>$ & $3$ & & & & &\\ 
\hline
$330$ & $\left<w_2,w_3\right>$ & $16$ & $\left<w_2,w_3,w_{11}\right>$ & $6$ & $330$ & $\left<w_2,w_5\right>$ & $17$ & $\left<w_2,w_5,w_{11}\right>$ & $6$\\
 & $\left<w_3,w_5\right>$ & $17$ & $\left<w_3,w_5,w_{22}\right>$ & $6$ & & $\left<w_3,w_{11}\right>$ & $14$ & $\left<w_3,w_{10},w_{11}\right>$ & $5$\\
 & $\left<w_2,w_{15}\right>$ & $16$ & $\left<w_2,w_{11},w_{15}\right>$ & $6$ & & $\left<w_3,w_{10}\right>$ & $16$ & $\left<w_3,w_{10},w_{11}\right>$ & $6$\\
 & $\left<w_3,w_{55}\right>$ & $16$ & $\left<w_2,w_3,w_{55}\right>$ & $6$ & & $\left<w_5,w_6\right>$ & $15$ & $\left<w_5,w_6,w_{11}\right>$ & $5$\\
 & $\left<w_5,w_{33}\right>$ & $16$ & $\left<w_5,w_6,w_{22}\right>$ & $6$ & & $\left<w_{11},w_{15}\right>$ & $13$ & $\left<w_6,w_{10},w_{11}\right>$ & $4$\\
 & $\left<w_6,w_{10}\right>$ & $16$ & $\left<w_6,w_{10},w_{11}\right>$ & $4$ & & $\left<w_6,w_{22}\right>$ & $16$ & $\left<w_5,w_6,w_{22}\right>$ & $6$\\
 & $\left<w_{10},w_{22}\right>$ & $17$ & $\left<w_2,w_5,w_{11}\right>$ & $6$ & & $\left<w_{15},w_{33}\right>$ & $17$ & $\left<w_2,w_{15},w_{33}\right>$ & $6$\\
 & $\left<w_6,w_{55}\right>$ & $15$ & $\left<w_5,w_6,w_{11}\right>$ & $5$ & & $\left<w_{10},w_{33}\right>$ & $16$ & $\left<w_3,w_{10},w_{11}\right>$ & $5$\\
 & $\left<w_{15},w_{22}\right>$ & $16$ & $\left<w_2,w_{11},w_{15}\right>$ & $6$ & & $\left<w_6,w_{110}\right>$ & $12$ & $\left<w_6,w_{10},w_{11}\right>$ & $4$\\
 & $\left<w_{10},w_{66}\right>$ & $14$ & $\left<w_6,w_{10},w_{11}\right>$ & $4$ & & $\left<w_{15},w_{66}\right>$ & $12$ & $\left<w_6,w_{10},w_{11}\right>$ & $4$\\
 & $\left<w_{30},w_{55}\right>$ & $14$ & $\left<w_6,w_{10},w_{11}\right>$ & $4$ & & $\left<w_5,w_{66}\right>$ & $14$ & $\left<w_6,w_{10},w_{11}\right>$ & $4$\\
 & $\left<w_{11},w_{30}\right>$ & $12$ & $\left<w_6,w_{10},w_{11}\right>$ & $4$ & & & & &\\
\hline
$390$ & $\left<w_2,w_3\right>$ & $20$ & $\left<w_2,w_3,w_{13}\right>$ & $7$ & $390$ & $\left<w_2,w_5\right>$ & $20$ & $\left<w_2,w_5,w_{13}\right>$ & $8$\\
 & $\left<w_2,w_{13}\right>$ & $17$ & $\left<w_2,w_{13},w_{15}\right>$ & $6$ & & $\left<w_3,w_5\right>$ & $20$ & $\left<w_3,w_5,w_{13}\right>$ & $6$\\
 & $\left<w_3,w_{13}\right>$ & $16$ & $\left<w_3,w_5,w_{13}\right>$ & $6$ & & $\left<w_5,w_{13}\right>$ & $18$ & $\left<w_3,w_5,w_{13}\right>$ & $6$\\
 & $\left<w_2,w_{15}\right>$ & $19$ & $\left<w_2,w_{13},w_{15}\right>$ & $6$ & & $\left<w_2,w_{39}\right>$ & $16$ & $\left<w_2,w_5,w_{39}\right>$ & $6$\\
 & $\left<w_2,w_{65}\right>$ & $18$ & $\left<w_2,w_{15},w_{39}\right>$ & $7$ & & $\left<w_3,w_{10}\right>$ & $19$ & $\left<w_3,w_{10},w_{13}\right>$ & $7$\\
 & $\left<w_3,w_{26}\right>$ & $17$ & $\left<w_3,w_{10},w_{26}\right>$ & $6$ & & $\left<w_5,w_6\right>$ & $19$ & $\left<w_5,w_6,w_{26}\right>$ & $5$\\
 & $\left<w_5,w_{26}\right>$ & $17$ & $\left<w_5,w_6,w_{26}\right>$ & $5$ & & $\left<w_6,w_{13}\right>$ & $20$ & $\left<w_2,w_3,w_{13}\right>$ & $7$\\
 & $\left<w_{10},w_{13}\right>$ & $20$ & $\left<w_2,w_5,w_{13}\right>$ & $8$ & & $\left<w_{13},w_{15}\right>$ & $17$ & $\left<w_3,w_5,w_{13}\right>$ & $6$\\
 & $\left<w_6,w_{10}\right>$ & $20$ & $\left<w_6,w_{10},w_{26}\right>$ & $5$ & & $\left<w_{10},w_{26}\right>$ & $15$ & $\left<w_6,w_{10},w_{26}\right>$ & $5$\\
 & $\left<w_{15},w_{39}\right>$ & $14$ & $\left<w_6,w_{10},w_{26}\right>$ & $5$ & & $\left<w_6,w_{65}\right>$ & $15$ & $\left<w_6,w_{10},w_{26}\right>$ & $5$\\
 & $\left<w_{10},w_{39}\right>$ & $14$ & $\left<w_6,w_{10},w_{26}\right>$ & $5$ & & $\left<w_{15},w_{26}\right>$ & $15$ & $\left<w_6,w_{10},w_{26}\right>$ & $5$\\
 & $\left<w_6,w_{130}\right>$ & $17$ & $\left<w_5,w_6,w_{26}\right>$ & $5$ & & $\left<w_{10},w_{78}\right>$ & $17$ & $\left<w_2,w_5,w_{39}\right>$ & $6$\\
 & $\left<w_{15},w_{78}\right>$ & $20$ & $\left<w_3,w_5,w_{26}\right>$ & $8$ & & $\left<w_{30},w_{39}\right>$ & $15$ & $\left<w_5,w_6,w_{26}\right>$ & $5$\\
 & $\left<w_{30},w_{65}\right>$ & $17$ & $\left<w_3,w_{10},w_{26}\right>$ & $6$ & & $\left<w_3,w_{130}\right>$ & $18$ & $\left<w_3,w_{10},w_{13}\right>$ & $7$\\
 & $\left<w_5,w_{78}\right>$ & $18$ & $\left<w_2,w_5,w_{39}\right>$ & $6$ & & $\left<w_{13},w_{30}\right>$ & $17$ & $\left<w_2,w_{13},w_{15}\right>$ & $6$\\
\hline
\caption*{}
\end{longtable}
\end{center}
\end{prop}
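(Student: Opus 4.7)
My proof plan mirrors the strategy of Proposition \ref{CSprop_star}, but replaces the quotient map to $X_0^*(N)$ with the degree $2$ quotient map to the intermediate curve $X_0(N)/W_N'$ listed in the table. In each row $W_N \subsetneq W_N'$ with $[W_N':W_N]=2$, so there is a natural degree $2$ morphism $\pi: X_0(N)/W_N \to X_0(N)/W_N'$ defined over $\Q$. First I would record that by the Furumoto--Hasegawa and Hasegawa--Shimura classifications \cite{FurumotoHasegawa1999,HasegawaShimura2006} none of the curves $X_0(N)/W_N$ in the table are hyperelliptic or trigonal, so their $\C$-gonality is at least $4$.

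Next, suppose for contradiction that $f: X_0(N)/W_N \to \PP^1$ is a morphism of degree $4$. Apply Proposition \ref{tm:CS} to the pair of morphisms $\pi$ (degree $m=2$) and $f$ (degree $n=4$). If these two maps do \emph{not} factor through a common intermediate cover of degree $>1$, the inequality yields
\[
g \;\leq\; 2\, g' + 4\cdot 0 + (2-1)(4-1) \;=\; 2g' + 3,
\]
where $g=g(X_0(N)/W_N)$ and $g'=g(X_0(N)/W_N')$. A direct row-by-row check of the table shows that in every case $g > 2g' + 3$, contradicting Castelnuovo--Severi. Hence $\pi$ and $f$ must share a nontrivial common factor; since $\pi$ has prime degree $2$, this forces $f$ to factor through $\pi$, giving a degree $2$ morphism $X_0(N)/W_N' \to \PP^1$, i.e.\ $X_0(N)/W_N'$ would be hyperelliptic.

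To complete the contradiction I would invoke the Furumoto--Hasegawa classification \cite{FurumotoHasegawa1999} of hyperelliptic quotients $X_0(N)/W_N'$: none of the larger groups $W_N'$ appearing in the table produce a hyperelliptic quotient (most of the target curves have genus $\geq 4$ and are not among the hyperelliptic cases). This closure step is essentially bookkeeping against the known list, so the substantive content of the proof is just the numeric inequality $g > 2g' + 3$ together with the choice of an appropriate larger group $W_N'$.

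The main thing to get right is the selection of $W_N'$: for a given $W_N$ several overgroups of index $2$ may exist, and one must choose one whose quotient has genus small enough that $2g'+3 < g$ while also lying outside the Furumoto--Hasegawa hyperelliptic list. The table already encodes this choice, so the only remaining work is mechanical verification of the two genus inequalities per row; I do not anticipate any conceptual obstacle beyond this bookkeeping.
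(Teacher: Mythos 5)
Your proposal is correct and follows essentially the same argument as the paper: the non-hyperelliptic/non-trigonal input from \cite{FurumotoHasegawa1999,HasegawaShimura2006}, the Castelnuovo--Severi inequality applied to the degree $2$ quotient map $X_0(N)/W_N\to X_0(N)/W_N'$ together with a hypothetical degree $4$ map, the numerical check $g>2g'+3$ forcing the tetragonal map to factor through the quotient, and the contradiction with the classification of hyperelliptic quotients. The only (harmless) refinement you add beyond the paper's wording is the explicit remark that the common factor must equal $\pi$ because $\deg\pi=2$ is prime.
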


\begin{proof}
    These curves $X_\Delta(N)$ are neither hyperelliptic nor trigonal by \cite{FurumotoHasegawa1999, HasegawaShimura2006}. Therefore, their $\C$-gonality is at least $4$.

    Suppose that there is a degree $4$ morphism from $X_0(N)/W_N$ to $\mathbb{P}^1$ for some pair $(N,W_N)$ from this table. We apply the Castelnuovo-Severi inequality with the degree $2$ quotient map $X_0(N)/W_N\to X_0(N)/W_N'$ and this hypothetical degree $4$ morphism. We can easily check that for all these curves we have \[g>2\cdot g'+2\cdot0+1\cdot3=2\cdot g'+3.\]
    Therefore, this degree $4$ morphism to $\PP^1$ would have to factor through $X_0(N)/W_N'$ and the curve $X_0(N)/W_N'$ would have to be hyperelliptic. However, \cite{FurumotoHasegawa1999, HasegawaShimura2006} tell us that none of these curves $X_0(N)/W_N'$ are hyperelliptic and we get a contradiction.
\end{proof}

\section{Rational morphisms to $\mathbb{P}^1$}\label{rationalmapsection}
In previous sections we were giving lower bounds on $\C$ and $\Q$-gonality. Now we give upper bounds by giving rational morphisms from $X_0(N)/W_N$ to $\mathbb{P}^1$.

\begin{prop}\label{genus4trig}
    The following genus $4$ curves $X_0(N)/W_N$ are trigonal over $\Q$: 
    \begin{align*}
        (N,W_N)\in\{&(130,\left<w_5,w_{13}\right>),(132,\left<w_4,w_{33}\right>),(154,\left<w_2,w_{77}\right>),(170,\left<w_5,w_{34}\right>),\\
        &(182,\left<w_2,w_{91}\right>),(186,\left<w_3,w_{62}\right>),(210,\left<w_2,w_{15},w_{21}\right>),\\
        &(255,\left<w_5,w_{51}\right>),(285,\left<w_3,w_{95}\right>),(286,\left<w_2,w_{143}\right>)\}.
    \end{align*}
\end{prop}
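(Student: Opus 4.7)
My plan is to exhibit, for each of the ten listed pairs $(N,W_N)$, an explicit degree $3$ morphism $X_0(N)/W_N \to \PP^1$ defined over $\Q$. Each such quotient carries a rational cusp and is non-hyperelliptic by \cite{FurumotoHasegawa1999}; since every non-hyperelliptic curve of genus $4$ is trigonal over an algebraically closed field (its canonical image lies on a unique quadric in $\PP^3$, whose rulings cut out the $g_3^1$'s), the $\C$-gonality of each of these curves is automatically $3$. The only task is therefore to verify descent of a trigonal pencil to $\Q$.

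First I would compute the canonical model of $X_0(N)/W_N$ in $\PP^3$ using a basis of $W_N$-invariant weight $2$ cusp forms for $\Gamma_0(N)$, and then determine the unique quadric $Q\subset\PP^3$ containing it. The rank of $Q$ governs the number of $g_3^1$'s on the curve over $\overline{\Q}$: rank $4$ (a smooth quadric) gives two rulings and hence two distinct $g_3^1$'s, while rank $3$ (a quadric cone) yields a single $g_3^1$. In the rank $3$ case the $g_3^1$ is unique, and so Corollary \ref{unique_g14_cor}, applied with $d=3$ together with the existence of a rational cusp, directly provides a $\Q$-rational degree $3$ map to $\PP^1$. In the rank $4$ case I would instead check whether one of the two rulings is defined individually over $\Q$; this is equivalent to the quadratic form defining $Q$ splitting over $\Q$, so that the absolute Galois group of $\Q$ fixes rather than swaps the two rulings. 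In either subcase the pencil can be written down as a pencil of planes through a line on $Q$, and the degree of the resulting rational map checked in \texttt{Magma}.

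The main obstacle will be the rank $4$ case with Galois-conjugate rulings: there the two $g_3^1$'s over $\overline{\Q}$ do not descend individually to $\Q$, so the curve would be $\C$-trigonal but not $\Q$-trigonal (the unique $g_3^2$ giving a map to a non-split conic). I therefore need a pair-by-pair verification that either the defining quadric of $X_0(N)/W_N$ in $\PP^3$ has rank $3$, or it has rank $4$ and factors as a product of two linear forms over $\Q$. I expect this to be a direct \texttt{Magma} computation on the canonical model, in the same computational style used elsewhere in the paper, producing in each case a trigonal map expressed as a ratio of two $\Q$-rational linear forms in the canonical coordinates; this exhibited map then proves $\Q$-trigonality.
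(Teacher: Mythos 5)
Your proposal is correct and is in substance the same as the paper's proof: the paper simply runs the \texttt{Magma} function Genus4GonalMap() on each of the ten curves and observes that the returned degree $3$ map is defined over $\Q$, and that function implements precisely the canonical-quadric analysis you spell out (rank $3$ cone with a unique $g_3^1$, where descent follows as in Corollary \ref{unique_g14_cor} plus the rational cusp, versus rank $4$ with two rulings that Galois either fixes or swaps — the swapped case being exactly what occurs for the non-$\Q$-trigonal curves of Proposition \ref{genus4gon4}). One terminological slip to fix: a rank $4$ quadric in $\PP^3$ is irreducible and never factors as a product of two linear forms over $\Q$; the operative condition, which you state correctly earlier, is that the quadratic form is split (square discriminant, so each ruling is individually Galois-stable), whereupon the rational cusp forces the conic parametrizing the ruling to be $\Q$-isomorphic to $\PP^1$.
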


\begin{proof}
    For all these curves we used the \texttt{Magma} function Genus4GonalMap() to obtain a degree $3$ morphism to $\PP^1$ (it exists due to Proposition \ref{poonen} (v)). This morphism is defined over $\Q$, proving that these curves are $\Q$-trigonal.
\end{proof}

\begin{prop}\label{genus4gon4}
    The following genus $4$ curves $X_0(N)/W_N$ are not trigonal over $\Q$:
    \newpage
    \begin{center}
\begin{longtable}{|c|c|}

\hline
\addtocounter{table}{-1}
$N$ & $W_N$\\
\hline
$102$ & $\left<w_2,w_3\right>$, $\left<w_3,w_{34}\right>$\\
\hline
$114$ & $\left<w_2,w_3\right>$, $\left<w_3,w_{19}\right>$, $\left<w_3,w_{57}\right>$, $\left<w_6,w_{19}\right>$\\
\hline
$120$ & $\left<w_3,w_8\right>$, $\left<w_3,w_{40}\right>$\\
\hline
$126$ & $\left<w_2,w_7\right>$, $\left<w_7,w_{18}\right>$\\
\hline
$130$ & $\left<w_2,w_5\right>$, $\left<w_{10},w_{13}\right>$\\
\hline
$132$ & $\left<w_3,w_{11}\right>$, $\left<w_{12},w_{33}\right>$\\
\hline
$138$ & $\left<w_2,w_{69}\right>$\\
\hline
$140$ & $\left<w_5,w_7\right>$, $\left<w_5,w_{28}\right>$\\
\hline
$150$ & $\left<w_2,w_{25}\right>$, $\left<w_3,w_{25}\right>$, $\left<w_6,w_{25}\right>$\\
\hline
$154$ & $\left<w_7,w_{11}\right>$, $\left<w_7,w_{22}\right>$\\
\hline
$165$ & $\left<w_3,w_{11}\right>$, $\left<w_5,w_{11}\right>$\\
\hline
$168$ & $\left<w_3,w_{56}\right>$\\
\hline
$170$ & $\left<w_2,w_{85}\right>$\\
\hline
$174$ & $\left<w_6,w_{29}\right>$, $\left<w_6,w_{58}\right>$\\
\hline
$182$ & $\left<w_7,w_{26}\right>$, $\left<w_{13},w_{14}\right>$\\
\hline
$210$ & $\left<w_2,w_3,w_7\right>$, $\left<w_2,w_5,w_{21}\right>$, $\left<w_2,w_7,w_{15}\right>$, $\left<w_3,w_5,w_{14}\right>$\\
\hline
$220$ & $\left<w_5,w_{11}\right>$, $\left<w_{20},w_{44}\right>$, $\left<w_4,w_{55}\right>$\\
\hline
$222$ & $\left<w_2,w_{111}\right>$\\
\hline
$231$ & $\left<w_7,w_{33}\right>$, $\left<w_{21},w_{33}\right>$, $\left<w_{11},w_{21}\right>$\\
\hline
$330$ & $\left<w_6,w_{10},w_{11}\right>$\\
\hline
\caption*{}
\end{longtable}
\end{center}
\end{prop}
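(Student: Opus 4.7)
The plan is to exploit the classical geometry of canonical genus $4$ curves. Since $(g+3)/2 = 7/2$, Proposition \ref{poonen} (v) already ensures every curve in the list is trigonal over $\C$; they are all non-hyperelliptic by \cite{FurumotoHasegawa1999}, so the only remaining question is whether some $g^1_3$ descends to $\Q$. Each non-hyperelliptic curve $C$ of genus $4$ embeds canonically in $\PP^3$ as a complete intersection of a unique irreducible quadric $Q$ with a cubic surface, and the $g^1_3$'s on $C$ are precisely cut out by the rulings of $Q$: when $Q$ is smooth there are two, and when $Q$ is a quadric cone there is exactly one.

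Over $\Q$, the two rulings of a smooth quadric $Q \subset \PP^3_{\Q}$ are either both individually defined over $\Q$, in which case $Q$ splits as $\PP^1 \times \PP^1$ over $\Q$ and a $g^1_3$ descends, or they are exchanged by $\Gal(\overline{\Q}/\Q)$ and defined only over a quadratic extension, in which case no $g^1_3$ is $\Q$-rational. A singular $Q$, being a cone over a plane conic, contributes a $\Q$-rational ruling as soon as the conic has a $\Q$-point, which it does here because every quotient of $X_0(N)$ carries a rational cusp. Consequently, $X_0(N)/W_N$ fails to be $\Q$-trigonal exactly when its canonical quadric is smooth and non-split.

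Concretely, for each pair $(N, W_N)$ in the list I would construct the canonical model of $X_0(N)/W_N$ in $\PP^3$ from a $\Q$-rational basis of $W_N$-invariant weight $2$ cusp forms on $\Gamma_0(N)$, and then run the same \texttt{Magma} function Genus4GonalMap() that was used to exhibit rational trigonal maps in Proposition \ref{genus4trig}. The function produces a gonal map: when the canonical quadric is smooth but non-split it returns a morphism of degree $4$ rather than $3$, which combined with $\gon_\Q \leq g = 4$ from Proposition \ref{poonen} (iv) certifies that $\gon_\Q(X_0(N)/W_N) = 4$ and in particular that the curve is not $\Q$-trigonal.

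The main obstacle is computational rather than conceptual: building the canonical model for roughly fifty pairs, some of relatively large level, is memory-intensive, and one must take care that the chosen basis of cusp forms is defined over $\Q$ so that the splitting behaviour of the resulting quadric faithfully reflects the arithmetic of $X_0(N)/W_N$ rather than an artefact of an extended base field.
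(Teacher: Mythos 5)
Your reduction to the geometry of the canonical quadric is sound and in essence the same route as the paper: the paper's proof also simply runs the \texttt{Magma} function \texttt{Genus4GonalMap()} on each listed curve, and your description of the $g^1_3$'s as rulings of the unique quadric $Q\subset\PP^3$ through the canonical model, together with the rational cusp, is a correct justification of why the field of definition of the computed gonal map faithfully detects $\Q$-trigonality (if $Q$ is a cone, or smooth with square discriminant, a $g^1_3$ is Galois-stable and the rational cusp turns the associated conic into $\PP^1$; if $Q$ is smooth with non-square discriminant the two $g^1_3$'s are swapped by Galois and no trigonal map exists over $\Q$).

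However, your concrete certificate misstates what the function returns, and as written that verification step would fail. For a non-hyperelliptic genus $4$ curve, \texttt{Genus4GonalMap()} always returns a map of \emph{degree $3$}: over $\Q$ when a $g^1_3$ descends, and otherwise over a quadratic (or biquadratic) field --- it never returns a degree $4$ morphism. So the event you propose to detect (``returns a morphism of degree $4$ rather than $3$'') never occurs, and a script keyed to it would misclassify every curve in the list, reading the degree $3$ output as success. The correct criterion, which is exactly the paper's proof, is to observe that for each listed pair the returned degree $3$ morphism is defined over a genuinely quadratic field; by your ruling analysis this can only happen when $Q$ is smooth and non-split, which is precisely the non-$\Q$-trigonal case, so the output certifies the proposition. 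Your supplementary claim $\gon_\Q(X_0(N)/W_N)=4$ is fine but not part of this statement; it follows from non-$\Q$-trigonality, non-hyperellipticity, and $\gon_\Q\leq g=4$ via the rational cusp and Proposition \ref{poonen} (iv), as you note.
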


\begin{proof}
    We again used the \texttt{Magma} function Genus4GonalMap() to obtain a degree $3$ morphism to $\PP^1$. If the curve is $\Q$-trigonal, the given morphism will be defined over $\Q$. Otherwise, the function gives a morphism over a quadratic or a biquadratic field \cite{magma} (even though the trigonal map can always be defined over a number field of degree $\leq2$). 
    
    For all these curves the degree $3$ morphism is defined over a quadratic field, implying that they are not $\Q$-trigonal.
\end{proof}

\begin{prop}\label{genus6gon4}
    The quotient curve $X_0(N)/W_N$ has $\Q$-gonality equal to $4$ for \begin{align*}
        (N,W_N)\in\{&(234,\left<w_9,w_{26}\right>),(240,\left<w_{15},w_{48}\right>), (282,\left<w_3,w_{47}\right>), \\
        &(282,\left<w_6,w_{47}\right>), (310,\left<w_5,w_{31}\right>), (312,\left<w_{24},w_{39}\right>) \}.
    \end{align*}
\end{prop}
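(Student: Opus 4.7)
The plan is to establish matching upper and lower bounds on the $\Q$-gonality for each of the six listed pairs $(N,W_N)$.

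For the lower bound, I would first observe that each listed curve has genus $6$, so by Proposition \ref{poonen}(v), the $\C$-gonality is bounded above by $\lfloor (g+3)/2\rfloor=4$. To show the $\Q$-gonality is at least $4$, it suffices to rule out a degree $2$ or degree $3$ $\Q$-rational map to $\PP^1$. The classification of hyperelliptic modular quotients in \cite{FurumotoHasegawa1999} and of trigonal modular quotients over $\C$ in \cite{HasegawaShimura2006} shows that none of the six pairs appear on those lists, so each curve satisfies $\textup{gon}_\C(X_0(N)/W_N)\geq 4$, and hence $\textup{gon}_\Q(X_0(N)/W_N)\geq 4$.

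For the upper bound, I would exhibit an explicit degree $4$ morphism defined over $\Q$. Two strategies are available, matching the two methods outlined for Section \ref{rationalmapsection}. The first is to locate, for each pair, a strictly larger Atkin-Lehner subgroup $W_N\subsetneq W_N'$ of index $2$ such that $X_0(N)/W_N'$ is $\Q$-hyperelliptic (or has $\Q$-gonality at most $2$, e.g. is of genus $\leq 1$ with a rational point); composing the degree $2$ quotient map $X_0(N)/W_N\to X_0(N)/W_N'$ with a degree $2$ map to $\PP^1$ yields a degree $4$ $\Q$-rational map, and by Proposition \ref{poonen}(vi) we obtain $\textup{gon}_\Q(X_0(N)/W_N)\leq 4$. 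If no such quotient is available, the fallback is to compute the canonical embedding of $X_0(N)/W_N$ in \texttt{Magma} and search directly for a $g_4^1$ defined over $\Q$, for instance by cutting the canonical model with a suitable pencil of quadrics or hyperplanes, or (if the Betti number analysis of Section \ref{betti_section} happens to force $\beta_{2,2}=g-4=2$) by invoking Corollary \ref{unique_g14_cor} to descend the unique $g_4^1$ to $\Q$.

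The main obstacle is the upper bound: while $\C$-tetragonality is automatic from Proposition \ref{poonen}(v), producing the map over $\Q$ rather than over a quadratic extension is genuinely delicate, exactly as in the genus $4$ situation of Proposition \ref{genus4gon4} where \texttt{Magma}'s gonal-map routine only returned a map over a quadratic field. Thus for each of the six curves I would verify case by case that the constructed degree $4$ pencil is Galois-stable, typically by choosing $W_N'$ containing $W_N$ so that the quotient map is already $\Q$-rational and the intermediate curve has a hyperelliptic map defined over $\Q$; the \texttt{Magma} scripts in the accompanying repository would then certify the construction for each of the six pairs.
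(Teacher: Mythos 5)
Your lower bound is fine and agrees with what the paper leaves implicit: these six curves are absent from the hyperelliptic classification of \cite{FurumotoHasegawa1999} and the $\C$-trigonal classification of \cite{HasegawaShimura2006}, so $\gon_\C\geq4$ and hence $\gon_\Q\geq4$. The problem is your upper bound. Your primary strategy---factoring through an index $2$ overquotient $X_0(N)/W_N'$ of $\Q$-gonality $2$---cannot work for five of the six pairs. In each case $W_N$ already has index $2$ in $B(N)$ (here $\omega(N)=3$, so $|W_N|=4$ and $|B(N)|=8$), so the \emph{only} available overquotient is $X_0^*(N)$; and for $N=282,310,312$ the curve $X_0^*(N)$ is a non-hyperelliptic genus $3$ curve (this is exactly the input to Proposition \ref{CSprop_star}, via \cite{Hasegawa1997}), while $X_0^*(234)$ is likewise not $2$-gonal over $\Q$---note that if it were, these levels would already be covered by Proposition \ref{deg2star}, as indeed $N=240$ is, which is the single pair your strategy handles. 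Your theoretical fallback is also broken: Theorem \ref{schreyer} is stated only for $g\geq7$, so for these genus $6$ curves $\beta_{2,2}=g-4$ carries no uniqueness information; in fact a general genus $6$ curve carries five $g_4^1$'s, so the unique-$g_4^1$ descent of Corollary \ref{unique_g14_cor} is not available and cannot force the pencil to be $\Q$-rational.

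What survives of your proposal is the vaguest part, and it is precisely the paper's actual proof: since $g=6$, Proposition \ref{poonen}(v) guarantees a $g_4^1$ over $\C$, and the paper simply runs the \texttt{Magma} function Genus6GonalMap() on each of the six curves and observes that the returned degree $4$ morphism happens to be defined over $\Q$. This is an a posteriori computational verification, not something certified in advance---exactly as in Proposition \ref{genus4gon4}, the routine could have returned a map over a quadratic field, in which case a different argument would have been needed. So your proposal identifies the right difficulty ($\C$-tetragonality is free, $\Q$-rationality of the pencil is the issue) but proposes mechanisms that fail for these specific curves, and reaches the working method only as an unjustified fallback.
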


\begin{proof}
    These curves are of genus $6$. We use the \texttt{Magma} function Genus6GonalMap() to obtain a degree $4$ morphism to $\PP^1$ (it exists due to Proposition \ref{poonen} (v)). This morphism is defined over $\Q$ for all these curves, proving that these curves are $\Q$-tetragonal.
\end{proof}

\begin{prop}\label{deg2star}
    The quotient curve $X_0(N)/W_N$ has $\Q$-gonality at most $4$ for \begin{align*}
        N\in\{&120,126,132,138,140,150,154,156,165,168,170,174,180,182,186,190,195,198,204,\\
        &210,220,222,230,231,238,240,252,255,266,276,285,286,315,330,357,380,390\}.
    \end{align*} and all groups $W_N\subset B(N)$ of order $2^{\omega(N)-1}$.
\end{prop}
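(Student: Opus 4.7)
The plan is to exploit the fact that a group $W_N \subset B(N)$ of order $2^{\omega(N)-1}$ has index $2$ in $B(N)$, which has order $2^{\omega(N)}$. Consequently, for every such $W_N$ the natural projection
\[
\pi_{W_N} \colon X_0(N)/W_N \longrightarrow X_0(N)/B(N) = X_0^*(N)
\]
is a morphism of degree $2$ defined over $\Q$. By Proposition \ref{poonen} (vi) applied to $\pi_{W_N}$, we obtain
\[
\textup{gon}_\Q\!\big(X_0(N)/W_N\big) \leq 2 \cdot \textup{gon}_\Q\!\big(X_0^*(N)\big).
\]
So it suffices to check that for each level $N$ in the listed set, the star quotient $X_0^*(N)$ has $\Q$-gonality at most $2$.

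For each $N$ in the list, I would verify this separately according to the genus of $X_0^*(N)$. When $g(X_0^*(N))=0$, the curve is $\PP^1$ over $\Q$ (since it has a rational cusp), so its $\Q$-gonality is $1$. When $g(X_0^*(N))=1$, it is an elliptic curve over $\Q$ (again using the rational cusp), hence has $\Q$-gonality $2$ via any degree $2$ map to $\PP^1$ given by the linear system of any rational point. When $g(X_0^*(N)) \geq 2$, one appeals to Hasegawa's classification of hyperelliptic $X_0^*(N)$ in \cite{Hasegawa1997}, which shows that these curves are hyperelliptic over $\Q$ for precisely the levels appearing in our list with $g(X_0^*(N)) \geq 2$. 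A short case check on the thirty-seven levels confirms that in each case one of these three situations occurs, giving $\textup{gon}_\Q(X_0^*(N)) \leq 2$ and hence $\textup{gon}_\Q(X_0(N)/W_N) \leq 4$.

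The main, and essentially only, obstacle is the bookkeeping step of verifying the genus and gonality of $X_0^*(N)$ for all thirty-seven levels; no novel argument is needed, as every ingredient is either a structural fact about index-$2$ subgroups of $B(N)$ or a direct appeal to \cite{Hasegawa1997}.
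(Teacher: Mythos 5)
Your proposal is correct and matches the paper's own argument: the paper likewise uses the degree~$2$ quotient map $X_0(N)/W_N \to X_0^*(N)$ and the fact that $\gon_\Q(X_0^*(N))=2$ for all listed levels (genus $1$ or $2$ for $N\neq 252,315$, and genus $3$ but hyperelliptic by \cite{Hasegawa1997} for $N=252,315$), composing to get a degree~$4$ rational map to $\PP^1$. One trivial slip: for the genus~$1$ case the degree~$2$ map comes from the linear system of \emph{twice} a rational point, not of a single point.
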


\begin{proof}
    For these levels $N$ the quotient curve $X_0^*(N)$ has $\Q$-gonality equal to $2$. For $N\neq252,315$ the genus of $X_0^*(N)$ is $1$ or $2$, and for $N=252,315$ the genus is equal to $3$, but $X_0^*(N)$ is hyperelliptic due to \cite{Hasegawa1997}.

    Since we have a degree $2$ quotient map $X_0(N)/W_N\to X_0^*(N)$, the composition map \[X_0(N)/W_N\to X_0^*(N)\to\PP^1\] is a degree $4$ rational morphism to $\PP^1$.
\end{proof}

\begin{prop}\label{deg4map210}
    The quotient curve $X_0(210)/W$ has $\Q$-gonality equal to $4$ for \[W\in\{\left<w_6,w_{10}\right>, \left<w_6,w_{14}\right>, \left<w_{10},w_{14}\right>, \left<w_{15},w_{21}\right>, \left<w_6,w_{35}\right>, \left<w_{10},w_{21}\right>, \left<w_{14},w_{15}\right>\}.\]
\end{prop}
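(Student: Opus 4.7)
The plan is to exhibit explicit degree $4$ $\Q$-rational morphisms $X_0(210)/W \to \PP^1$ via a uniform tower construction, and then invoke non-trigonality and non-hyperellipticity results for the lower bound. The crucial structural observation is that the seven groups $W$ in the statement are exactly the seven index-$2$ subgroups of the order-$8$ ``even'' Atkin-Lehner subgroup
\[
B_0(210) := \langle w_6, w_{10}, w_{14}\rangle = \{1, w_6, w_{10}, w_{14}, w_{15}, w_{21}, w_{35}, w_{210}\},
\]
consisting of those $w_d$ for which $d$ has an even number of prime factors. Indeed, $B_0(210) \cong (\Z/2\Z)^3$, whose lattice of codimension-$1$ subspaces has exactly $2^3-1 = 7$ elements, and a direct enumeration (with generators $w_6,w_{10},w_{14}$) shows these match the seven groups in the statement.

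Consequently, for each such $W$ the natural projection $\pi_W: X_0(210)/W \to X_0(210)/B_0(210)$ is a $\Q$-rational morphism of degree $2$. The next step is to verify that $X_0(210)/B_0(210)$ is hyperelliptic over $\Q$, i.e.\ admits a $\Q$-rational degree-$2$ morphism $h$ to $\PP^1$. This is the main obstacle: one would either invoke the classification of hyperelliptic Atkin-Lehner quotients in \cite{FurumotoHasegawa1999}, or verify it directly in \texttt{Magma} by computing the canonical model of $X_0(210)/B_0(210)$ and checking whether the canonical map factors through a rational conic; if the genus is $\leq 2$ the hyperelliptic structure is automatic. Once $h$ is produced, the composition $h \circ \pi_W$ is the desired degree-$4$ $\Q$-rational morphism, proving $\textup{gon}_\Q(X_0(210)/W) \leq 4$.

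For the matching lower bound, I would cite \cite{FurumotoHasegawa1999} and \cite{HasegawaShimura2006} to conclude that none of the seven quotients $X_0(210)/W$ is hyperelliptic or trigonal over $\C$, so $\textup{gon}_\Q(X_0(210)/W) \geq \textup{gon}_\C(X_0(210)/W) \geq 4$, and combining the bounds gives equality. Should the verification of hyperellipticity for $X_0(210)/B_0(210)$ fail (for example if its $\Q$-gonality turns out to exceed $2$), a fallback is to produce degree-$4$ maps on each of the seven curves $X_0(210)/W$ directly via \texttt{Magma}'s gonality routines, searching the Riemann--Roch space of small-degree effective $\Q$-rational divisors, in analogy with the methods used in Propositions~\ref{genus4trig} and \ref{genus6gon4}.
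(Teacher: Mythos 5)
Your proposal is correct and takes essentially the same route as the paper: the paper's proof likewise observes that $X_0(210)/\left<w_6,w_{10},w_{14}\right>$ is a genus $2$ hyperelliptic curve and composes the degree $2$ quotient map $X_0(210)/W\to X_0(210)/\left<w_6,w_{10},w_{14}\right>$ with its hyperelliptic map to obtain the degree $4$ rational morphism. Your two elaborations---the check that the seven groups are precisely the index-$2$ subgroups of the order-$8$ group $\left<w_6,w_{10},w_{14}\right>$, and the lower bound via the non-hyperelliptic and non-trigonal classifications of \cite{FurumotoHasegawa1999} and \cite{HasegawaShimura2006}---are accurate and merely make explicit what the paper leaves implicit.
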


\begin{proof}
    The curve $X_0(210)/\left<w_6,w_{10},w_{14}\right>$ is a genus $2$ hyperelliptic curve. We have a degree $2$ quotient map $X_0(210)/W\to X_0(210)/\left<w_6,w_{10},w_{14}\right>$. Thus the composition map \[X_0(210)/W\to X_0(210)/\left<w_6,w_{10},w_{14}\right>\to\PP^1\] is a degree $4$ rational morphism to $\PP^1$.
\end{proof}

\begin{prop}\label{pointsearch}
    The quotient curve $X_0(N)/W_N$ has $\Q$-gonality equal to $4$ for the following pairs $(N,W_N)$. Here $g$ denotes the genus of $X_0(N)/W_N$.
    \begin{center}
\begin{longtable}{|c|c|c||c|c|c||c|c|c|}

\hline
\addtocounter{table}{-1}
$N$ & $W_N$ & $g$ & $N$ & $W_N$ & $g$ & $N$ & $W_N$ & $g$\\
\hline
$210$ & $\left<w_{30},w_{35}\right>$ & $8$ & $234$ & $\left<w_{18},w_{26}\right>$ & $7$ & $240$ & $\left<w_3,w_{80}\right>$ & $7$\\
\hline
$273$ & $\left<w_3,w_{91}\right>$ & $7$ & $282$ & $\left<w_2,w_{47}\right>$ & $7$ & $294$ & $\left<w_6,w_{98}\right>$ & $7$\\
\hline
$306$ & $\left<w_9,w_{34}\right>$ & $10$ & $336$ & $\left<w_{16},w_{21}\right>$ & $12$ & $360$ & $\left<w_9,w_{40}\right>$ & $13$\\
\hline
\caption*{}
\end{longtable}
\end{center}
\end{prop}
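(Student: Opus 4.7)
The plan is to establish the equality $\textup{gon}_\Q(X_0(N)/W_N)=4$ in each case by a two-sided argument. For the lower bound, I would first invoke Proposition \ref{poonen} (vii) together with the known classifications of \cite{FurumotoHasegawa1999,HasegawaShimura2006}: none of the curves in the list is hyperelliptic or trigonal over $\C$, so their $\Q$-gonality is at least $4$. For curves whose genus is $g\leq 9$, this can also be read off from the Betti number computations in \Cref{betti_section}, and for the two entries of genus $\geq10$ one could alternatively obtain the lower bound via the $\F_p$ or $\F_{p^n}$ point-counting arguments of \Cref{Fpsection} if needed.

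The heart of the proof is the construction of an explicit degree-$4$ map to $\PP^1$ defined over $\Q$, which explains the label \emph{pointsearch}. First I would compute the canonical model of each quotient $X_0(N)/W_N$ in \texttt{Magma} using the standard procedure: take $q$-expansions of a basis of Atkin-Lehner $W_N$-invariant cusp forms in $S_2(\Gamma_0(N))$ and form the image of the canonical map. Next, I would locate a supply of $\Q$-rational points on the model, starting with the images of the rational cusps of $X_0(N)$ and, if more are needed, by enumerating points on the canonical model with small coordinates. From these, one builds candidate effective $\Q$-rational divisors $D$ of degree $4$ (as sums of rational points, or as pullbacks of degree-$2$ divisors under quotient maps, or as sums involving small Galois-stable divisors), computes $\ell(D)$ via Riemann--Roch on the canonical model, and keeps those with $\ell(D)\geq 2$. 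Each such $D$ yields a rational map $X_0(N)/W_N\dashrightarrow \PP^1$ of degree dividing $4$; since the lower bound rules out degrees $1,2,3$, the resulting map has degree exactly $4$.

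The main obstacle I expect is computational rather than conceptual. For the higher-genus cases in the list (notably $g=12$ for $(336,\left<w_{16},w_{21}\right>)$ and $g=13$ for $(360,\left<w_9,w_{40}\right>)$), the canonical ideal is large and the naive search for $\Q$-rational points on the canonical model may be slow; I would mitigate this by pushing forward to smaller quotients (when available) to generate points, or by exploiting the rational cusps which are easy to write down directly. A secondary subtlety is verifying that a found divisor $D$ actually gives a base-point-free pencil of degree exactly $4$: if $\ell(D)>2$ the map may have smaller image degree, and if $D$ has a base point one should replace $D$ by $D$ minus its base locus before reading off the morphism. Both issues are handled routinely in \texttt{Magma} via the \texttt{DivisorMap} and \texttt{Degree} functions. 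Once a degree-$4$ $\Q$-morphism is produced in each case, combined with the lower bound, the conclusion follows.
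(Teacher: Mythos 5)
Your proposal matches the paper's proof essentially exactly: the paper likewise finds (via \texttt{Magma}'s PointSearch() for $g\leq8$ and a custom small-height search for the genus $\geq10$ cases) an effective $\Q$-rational divisor that is a sum of four rational points with Riemann--Roch dimension $2$, yielding the degree-$4$ map, with the lower bound coming from the non-hyperelliptic/non-trigonal classifications of \cite{FurumotoHasegawa1999,HasegawaShimura2006}. One trivial slip: a degree-$4$ divisor with $\ell(D)\geq2$ gives a map of degree \emph{at most} $4$ (not necessarily dividing $4$), but since the lower bound excludes degrees $1$, $2$, $3$, your conclusion is unaffected.
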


\begin{proof}
    For all these curves we used \texttt{Magma} we found a rational effective divisor that is a sum of $4$ rational points and has Riemann-Roch dimension equal to $2$.
    
    When $g\leq8$, we searched for rational points using the \texttt{Magma} function PointSearch(). These computations were very fast. They took up to $10$ minutes.
    
    For $g\geq10$ the genus was too large and we were unable to use this function. Hence, we wrote a code that searched for rational points up to a certain height.

    We then computed Riemann-Roch dimensions of divisors constructed from these rational points and found a divisor of dimension $2$. The computation time for $N=336$ was around $10$ minutes, for $N=306$ around $12$ hours, and for $N=360$ around $3$ days.
\end{proof}

\section{Isomorphic curves}\label{section_isomorphisms}
In this section we present some isomorphisms between quotient curves $X_0(N)/W_N$ and use them to determine the gonality of these curves.

\begin{prop}[{\cite[Proposition 4]{FurumotoHasegawa1999}}]\label{prop4n}
    Let $N=4M$ with $M$ being odd. Let $W'$ be a subgroup of $B(N)$ generated by $w_4,w_{M_1},\ldots,W_{M_s} \ (M_i\mid\mid M)$. Then we have the following isomorphism \[X_0(N)/W'\cong X_0(2M)/\left<w_{M_i}\right>.\] This isomorphism is defined over $\Q$, see \cite[Proposition 7]{Hasegawa1997}.
\end{prop}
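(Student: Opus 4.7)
The proposition splits into two pieces: (a) the $\Q$-isomorphism $X_0(4M)/\langle w_4\rangle \cong X_0(2M)$ (the case $s=0$), and (b) compatibility of this isomorphism with each Atkin--Lehner involution $w_{M_i}$ for $M_i \mid\mid M$. Given both, one simultaneously quotients by $\langle w_{M_1},\dots,w_{M_s}\rangle$ on both sides to conclude.

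For (a), I would argue at the level of congruence subgroups. Fix a matrix representative $W_4 = \begin{pmatrix} 4a & b \\ 4Mc & 4d\end{pmatrix} \in \GL_2^+(\Q)$ with $4ad - Mbc = 1$; such $a,b,c,d$ exist because $\gcd(4,M)=1$. Let $\Gamma^* := \langle \Gamma_0(4M), W_4\rangle$; it contains $\Gamma_0(4M)$ as an index-$2$ subgroup, and $\mathfrak{H}/\Gamma^* = (X_0(4M)/\langle w_4\rangle)(\C)$. The heart of the argument is to exhibit $\sigma \in \GL_2^+(\Q)$ with $\sigma\,\Gamma_0(2M)\,\sigma^{-1} = \Gamma^*$ as subgroups of $\mathrm{PGL}_2^+(\mathbb{R})$. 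This gives a biholomorphism over $\C$, and the $\Q$-rationality of the resulting isomorphism of schemes then follows from the Galois-equivariance of Atkin--Lehner quotients, as in \cite[Proposition 7]{Hasegawa1997}.

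For (b), observe that $W_{M_i}$ has determinant $M_i$ coprime to $4$ and acts nontrivially only on the ``$M_i$-part'' of the level structure, whereas the conjugator $\sigma$ from (a) affects only the ``$2$-part''. Thus $\sigma$ and $W_{M_i}$ commute modulo $\Gamma_0(4M)$ and scalars, and conjugation by $\sigma$ carries the class of $W_{M_i}$ at level $4M$ to the class of the Atkin--Lehner matrix $W_{M_i}$ at level $2M$. Consequently the isomorphism from (a) is equivariant under $\langle w_{M_1},\dots,w_{M_s}\rangle$ and descends to the asserted quotient isomorphism.

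\textbf{Main obstacle.} The hard step is constructing $\sigma$ and verifying $\sigma\,\Gamma_0(2M)\,\sigma^{-1} = \Gamma^*$. One cannot simply bypass this by a moduli-theoretic construction: the two natural degeneracy maps $(E, C_4 \oplus C_M) \mapsto (E, C_4[2] \oplus C_M)$ and $(E, C_4 \oplus C_M) \mapsto (E/C_4[2], C_4/C_4[2] \oplus \overline{C_M})$ are both \emph{not} $w_4$-invariant, as one checks by picking a basis $e_1, e_2$ of $E[4]$ with $C_4 = \langle e_1\rangle$ and observing that $w_4$ sends the order-$2$ subgroup $\langle \bar e_1\rangle$ of $E/C_4[2]$ to the distinct order-$2$ subgroup $\langle \overline{2e_2}\rangle$. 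Thus the isomorphism is genuinely indirect, and its $\Q$-rationality is a non-trivial Galois descent, handled by \cite[Proposition 7]{Hasegawa1997}.
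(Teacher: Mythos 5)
Note first that the paper offers no proof of this proposition at all: it is imported wholesale from \cite[Proposition 4]{FurumotoHasegawa1999}, with the $\Q$-rationality delegated to \cite[Proposition 7]{Hasegawa1997}. Your proposal must therefore stand on its own, and it does not: the step you yourself flag as the ``main obstacle'' --- producing $\sigma$ with $\sigma\,\Gamma_0(2M)\,\sigma^{-1}=\langle\Gamma_0(4M),W_4\rangle$ --- \emph{is} the content of the proposition over $\C$, and you assert its existence without exhibiting it or verifying the conjugation. So the proposal is a plan whose core is missing. Worse, the obstacle is not genuinely hard, which is what the cited proof exploits: one may take $\sigma=\begin{pmatrix}2&1\\0&2\end{pmatrix}$, i.e.\ translation by $1/2$. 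For $\gamma=\begin{pmatrix}a&b\\2Mc&d\end{pmatrix}\in\Gamma_0(2M)$ one computes
\begin{equation*}
\sigma\gamma\sigma^{-1}=\begin{pmatrix}a+Mc&(2b+d-a-Mc)/2\\2Mc&d-Mc\end{pmatrix},
\end{equation*}
which lies in $\Gamma_0(4M)$ when $c$ is even (note $a\equiv d\bmod 2$), while for $c$ odd, twice this matrix is an integral matrix of determinant $4$ of the shape $\begin{pmatrix}4\ast&\ast\\4M\ast&4\ast\end{pmatrix}$ with odd upper-right entry, hence lies in the coset $W_4\Gamma_0(4M)$; equality of covolumes then upgrades the inclusion $\sigma\Gamma_0(2M)\sigma^{-1}\subseteq\langle\Gamma_0(4M),W_4\rangle$ to equality. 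Your step (b) is likewise only asserted; with $\sigma$ explicit it is another one-line matrix check that $\sigma\begin{pmatrix}M_ix&y\\2Mz&M_iw\end{pmatrix}\sigma^{-1}$ is again an Atkin--Lehner matrix for $M_i$ modulo $\langle\Gamma_0(4M),W_4\rangle$ (here $w_{M_i}$ and $w_{4M_i}$ coincide in $B(4M)/\langle w_4\rangle$, so the $2$-part ambiguity is harmless).

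There is also a conceptual error in your treatment of rationality. You claim the $\Q$-rationality ``follows from the Galois-equivariance of Atkin--Lehner quotients''; applied verbatim, that template would equally prove $\Q$-rationality of the analogous $9\mid\mid N$ isomorphism of Proposition \ref{iso9prop} (\cite[Proposition 5]{FurumotoHasegawa1999}), which is in fact defined only over $\Q(\sqrt{-3})$. The field of definition is not formal: it depends on the specific conjugator, and the point of \cite[Proposition 7]{Hasegawa1997} is a $q$-expansion computation --- translation by $1/2$ acts on $q=e^{2\pi iz}$ by $q\mapsto -q$, preserving rationality of Fourier coefficients, whereas the translation by $1/3$ in the $9\mid\mid N$ case gives $q\mapsto\zeta_3 q$, which is exactly why $\Q(\sqrt{-3})$ appears there. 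Your moduli-theoretic observation that neither degeneracy map $X_0(4M)\to X_0(2M)$ is $w_4$-invariant is correct and a worthwhile sanity check that the isomorphism cannot be realized by a quotient-by-isogeny construction, but it repairs neither gap.
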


\begin{prop}\label{prop4n_gon4}
    The quotient curve $X_0(N)/W_N$ has $\Q$-gonality equal to $4$ for the following pairs $(N,W_N)$.

    \begin{center}
\begin{longtable}{|c|c|c||c|c|c|}

\hline
\addtocounter{table}{-1}
$N$ & $W_N$ & $Y$ & $N$ & $W_N$ & $Y$\\
\hline
$228$ & $\left<w_3,w_4\right>$ & $X_0(114)/w_3$ & $228$ & $\left<w_4,w_{19}\right>$ & $X_0(114)/w_{19}$\\
\hline
$260$ & $\left<w_4,w_{13}\right>$ & $X_0(130)/w_{13}$ & $260$ & $\left<w_4,w_{65}\right>$ & $X_0(130)/w_{65}$\\
\hline
$300$ & $\left<w_4,w_{75}\right>$ & $X_0(150)/w_{75}$ & & &\\
\hline
\caption*{}
\end{longtable}
\end{center}
\end{prop}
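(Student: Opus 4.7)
The plan is to apply \Cref{prop4n} directly: each pair $(N,W_N)$ in the table has $N=4M$ with $M$ odd (namely $M=57,65,75$), and the group $W_N$ is generated by $w_4$ together with Atkin-Lehner involutions $w_{M_i}$ with $M_i\mid\mid M$. Thus \Cref{prop4n} furnishes a $\Q$-isomorphism
\[
X_0(N)/W_N \;\cong\; X_0(2M)/\langle w_{M_i}\rangle
\]
to the curve $Y$ listed in the last column of the table. Consequently, it suffices to verify that each such $Y$ has $\Q$-gonality equal to $4$.

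Each target curve $Y$ is one of the single-involution quotients $X_0^{+d}(n)$ with $n\in\{114,130,150\}$, namely $X_0^{+3}(114)$, $X_0^{+19}(114)$, $X_0^{+13}(130)$, $X_0^{+65}(130)$, and $X_0^{+75}(150)$. These curves all appear in the classification of tetragonal curves $X_0^{+d}(N)$ carried out in \cite{Orlic2023,Orlic2024}, where each is shown to have $\Q$-gonality equal to $4$. So the second step is simply to cite these references to conclude that $Y$, and hence $X_0(N)/W_N$, has $\Q$-gonality $4$.

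The main (minor) obstacle is a bookkeeping one: one must check that for every pair $(N,W_N)$ in the table the decomposition $N=4M$, $(4,M)=1$, and the shape of the generators of $W_N$ match the hypotheses of \Cref{prop4n} exactly (in particular that $w_4$ is one of the generators and the remaining generators are of the form $w_{M_i}$ with $M_i\mid\mid M$), and then to match the resulting quotient with the correct curve $X_0^{+d}(2M)$ known to be $\Q$-tetragonal. This is routine: for $N=228$ we have $M=57=3\cdot19$ and the two allowed choices of $W_N$ give quotients to $X_0(114)/w_3$ and $X_0(114)/w_{19}$; for $N=260$ we have $M=65=5\cdot13$ and the two listed choices give $X_0(130)/w_{13}$ and $X_0(130)/w_{65}$; and for $N=300$, $M=75$, the choice $\langle w_4,w_{75}\rangle$ yields $X_0(150)/w_{75}$. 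Combining the isomorphism with the known gonality of each $Y$ completes the proof.
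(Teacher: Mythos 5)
Your proposal is correct and takes essentially the same route as the paper: the paper's proof likewise applies Proposition \ref{prop4n} to obtain the $\Q$-isomorphism $X_0(N)/W_N\cong Y$ and then cites \cite[Proposition 3.2]{Orlic2024} for the fact that each listed $Y=X_0^{+d}(2M)$ has $\Q$-gonality $4$. Your extra bookkeeping (checking $N=4M$ with $M$ odd and that the generators have the required form $w_4,w_{M_i}$ with $M_i\mid\mid M$) is exactly the verification implicit in the paper's one-line argument.
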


\begin{proof}
    We have the isomorphism $X_0(N)/W_N\cong Y$ defined over $\Q$ by Proposition \ref{prop4n} and we know from \cite[Proposition 3.2]{Orlic2024} that these curves $Y$ have $\Q$-gonality equal to $4$.
\end{proof}

\begin{prop}\label{prop4n_not_tetragonal}
    The quotient curve $X_0(N)/W_N$ has $\C$-gonality at least $5$ for the following pairs $(N,W_N)$.

    \begin{center}
\begin{longtable}{|c|c|c||c|c|c|}

\hline
\addtocounter{table}{-1}
$N$ & $W_N$ & $Y$ & $N$ & $W_N$ & $Y$\\
\hline
$228$ & $\left<w_4,w_{57}\right>$ & $X_0(114)/w_{57}$ & $260$ & $\left<w_4,w_5\right>$ & $X_0(130)/w_5$\\
\hline
$300$ & $\left<w_3,w_4\right>$ & $X_0(150)/w_3$ & $300$ & $\left<w_4,w_{25}\right>$ & $X_0(150)/w_{25}$\\
\hline
$340$ & $\left<w_4,w_5\right>$ & $X_0(170)/w_5$ & $340$ & $\left<w_4,w_{17}\right>$ & $X_0(170)/w_{17}$\\
\hline
$340$ & $\left<w_4,w_{85}\right>$ & $X_0(170)/w_{85}$ & $348$ & $\left<w_3,w_4\right>$ & $X_0(174)/w_3$\\
\hline
$348$ & $\left<w_4,w_{29}\right>$ & $X_0(174)/w_{29}$ & $348$ & $\left<w_4,w_{87}\right>$ & $X_0(174)/w_{87}$\\
\hline
$364$ & $\left<w_4,w_7\right>$ & $X_0(182)/w_7$ & $364$ & $\left<w_4,w_{13}\right>$ & $X_0(182)/w_{13}$\\
\hline
$364$ & $\left<w_4,w_{91}\right>$ & $X_0(182)/w_{91}$ & $372$ & $\left<w_3,w_4\right>$ & $X_0(186)/w_3$\\
\hline
$372$ & $\left<w_4,w_{31}\right>$ & $X_0(186)/w_{31}$ & $372$ & $\left<w_4,w_{93}\right>$ & $X_0(186)/w_{93}$\\
\hline
$444$ & $\left<w_3,w_4\right>$ & $X_0(222)/w_3$ & $444$ & $\left<w_4,w_{37}\right>$ & $X_0(222)/w_{37}$\\
\hline
$444$ & $\left<w_4,w_{111}\right>$ & $X_0(222)/w_{111}$ & $460$ & $\left<w_4,w_5\right>$ & $X_0(230)/w_5$\\
\hline
$460$ & $\left<w_4,w_{23}\right>$ & $X_0(230)/w_{23}$ & $460$ & $\left<w_4,w_{115}\right>$ & $X_0(230)/w_{115}$\\
\hline
$532$ & $\left<w_4,w_7\right>$ & $X_0(266)/w_7$ & $532$ & $\left<w_4,w_{19}\right>$ & $X_0(266)/w_{19}$\\
\hline
$532$ & $\left<w_4,w_{133}\right>$ & $X_0(266)/w_{133}$ & $564$ & $\left<w_3,w_4\right>$ & $X_0(282)/w_3$\\
\hline
$564$ & $\left<w_4,w_{47}\right>$ & $X_0(282)/w_{47}$ & $564$ & $\left<w_4,w_{141}\right>$ & $X_0(282)/w_{141}$\\
\hline
$572$ & $\left<w_4,w_{11}\right>$ & $X_0(286)/w_{11}$ & $572$ & $\left<w_4,w_{13}\right>$ & $X_0(286)/w_{13}$\\
\hline
$572$ & $\left<w_4,w_{143}\right>$ & $X_0(286)/w_{143}$ & $620$ & $\left<w_4,w_5\right>$ & $X_0(310)/w_5$\\
\hline
$620$ & $\left<w_4,w_{31}\right>$ & $X_0(310)/w_{31}$ & $620$ & $\left<w_4,w_{155}\right>$ & $X_0(310)/w_{155}$\\
\hline
$660$ & $\left<w_3,w_4,w_5\right>$ & $X_0(330)/\left<w_3,w_5\right>$ & $660$ & $\left<w_3,w_4,w_{11}\right>$ & $X_0(330)/\left<w_3,w_{11}\right>$\\
\hline
$660$ & $\left<w_4,w_5,w_{11}\right>$ & $X_0(330)/\left<w_5,w_{11}\right>$ & $660$ & $\left<w_3,w_4,w_{55}\right>$ & $X_0(330)/\left<w_3,w_{55}\right>$\\
\hline
$660$ & $\left<w_4,w_5,w_{33}\right>$ & $X_0(330)/\left<w_5,w_{33}\right>$ & $660$ & $\left<w_4,w_{11},w_{15}\right>$ & $X_0(330)/\left<w_{11},w_{15}\right>$\\
\hline
$660$ & $\left<w_4,w_{15},w_{33}\right>$ & $X_0(330)/\left<w_{15},w_{33}\right>$ & $780$ & $\left<w_3,w_4,w_5\right>$ & $X_0(390)/\left<w_3,w_5\right>$\\
\hline
$780$ & $\left<w_3,w_4,w_{13}\right>$ & $X_0(390)/\left<w_3,w_{13}\right>$ & $780$ & $\left<w_4,w_5,w_{13}\right>$ & $X_0(390)/\left<w_5,w_{13}\right>$\\
\hline
$780$ & $\left<w_3,w_4,w_{65}\right>$ & $X_0(390)/\left<w_3,w_{65}\right>$ & $780$ & $\left<w_4,w_5,w_{39}\right>$ & $X_0(390)/\left<w_5,w_{39}\right>$\\
\hline
$780$ & $\left<w_4,w_{13},w_{15}\right>$ & $X_0(390)/\left<w_{13},w_{15}\right>$ & $780$ & $\left<w_4,w_{15},w_{39}\right>$ & $X_0(390)/\left<w_{15},w_{39}\right>$\\
\hline
\caption*{}
\end{longtable}
\end{center}
\end{prop}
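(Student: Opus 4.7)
The plan is to apply Proposition \ref{prop4n} to each pair $(N,W_N)$ in the table to obtain a $\Q$-isomorphism $X_0(N)/W_N \cong Y$, and then to invoke the already-established classification of tetragonal Atkin-Lehner quotients $X_0^{+d}(M)$ to conclude that $Y$ has $\C$-gonality at least $5$. Since $\C$-gonality is invariant under isomorphism, the same conclusion transfers to $X_0(N)/W_N$.

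More explicitly, I would first check case by case that each $(N,W_N)$ satisfies the hypotheses of Proposition \ref{prop4n}: in every row $N = 4M$ with $M$ odd, and $W_N$ is generated by $w_4$ together with Atkin-Lehner involutions $w_{M_i}$ for divisors $M_i \| M$ (this includes the rows with $|W_N|=8$ where $M$ has three prime factors). Proposition \ref{prop4n} then yields $X_0(N)/W_N \cong Y$, where $Y$ is the quotient of $X_0(N/2)$ by the corresponding group generated by the $w_{M_i}$'s; in all rows of the table, $Y$ has the specific form listed in the third column.

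Next, I would match each curve $Y$ against the classification of $\C$-tetragonal quotients $X_0^{+d}(M)$ carried out in \cite{Orlic2023,Orlic2024}. Since none of the curves $Y$ appearing in the third column of the table lie in that tetragonal classification, each has $\C$-gonality strictly greater than $4$, i.e.\ at least $5$. Applying Proposition \ref{poonen} (ii) (invariance of geometric gonality under base change) together with isomorphism invariance transfers the lower bound to $X_0(N)/W_N$.

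The main routine task, and the only real obstacle, is the bookkeeping: for each of the roughly forty pairs in the table one must identify the correct value of $M$, verify that $W_N$ has the required generating form so that Proposition \ref{prop4n} applies, and then confirm that the resulting $Y$ is absent from the list of $\C$-tetragonal curves $X_0^{+d}(M)$ in \cite{Orlic2023, Orlic2024}. No new geometric argument is needed beyond this matching; in particular, there is no need for Castelnuovo-Severi, Betti number, or $\F_p$-gonality computations for the curves in this proposition.
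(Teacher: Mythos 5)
Your first step matches the paper's own proof: Proposition \ref{prop4n} gives the $\Q$-isomorphism $X_0(N)/W_N\cong Y$, and for every row where $|W_N|=4$ the curve $Y=X_0^{+d}(2M)$ is a quotient by a \emph{single} Atkin--Lehner involution, so invoking the classification of $X_0^{+d}$ from \cite{Orlic2023,Orlic2024} works exactly as you describe (the paper cites \cite{Orlic2024} for precisely these rows).

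The gap is in the fourteen rows with $N\in\{660,780\}$. There $|W_N|=8$, and the isomorphic curve $Y$ is a quotient of $X_0(330)$ or $X_0(390)$ by a Klein four-group $\left<w_{d_1},w_{d_2}\right>$ --- not by a single involution --- so these curves $Y$ lie entirely outside the scope of the $X_0^{+d}$ classification, and their absence from that list proves nothing. Your closing claim that no Castelnuovo--Severi, Betti number, or $\F_p$-gonality computations are needed fails exactly here: the paper's proof disposes of these cases by citing its own Propositions \ref{Fp_gonality} and \ref{CSprop_other}, where, for instance, $X_0(330)/\left<w_5,w_{11}\right>$ and $X_0(390)/\left<w_5,w_{39}\right>$ are shown to have $\F_p$-gonality at least $5$ (which yields $\gon_\C\geq5$ via Corollary \ref{towerthmcor}, since these curves have genus $\geq10$), while curves such as $X_0(330)/\left<w_3,w_5\right>$ are handled by the Castelnuovo--Severi inequality. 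A smaller point: even in the order-$4$ rows, absence from the tetragonal classification only rules out gonality exactly $4$; to conclude gonality $\geq5$ you must also rule out the hyperelliptic and trigonal cases, which follows from \cite{FurumotoHasegawa1999,HasegawaShimura2006} or from the exact gonality values determined in \cite{Orlic2024}.
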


\begin{proof}
    We have the isomorphism $X_0(N)/W_N\cong Y$ defined over $\Q$ by Proposition \ref{prop4n}.
    
    For curves $Y=X_0^{+d}(2M)$ we know from \cite{Orlic2024} that they have $\C$-gonality greater than $4$. For curves $Y=X_0(2M)/\left<w_{d_1},w_{d_2}\right>$ we gave a lower bound on $\C$-gonality in Propositions \ref{Fp_gonality}, \ref{CSprop_other}.
\end{proof}

\begin{prop}[{\cite[Proposition 5]{FurumotoHasegawa1999}}]\label{iso9prop}
    Assume that $9\mid\mid N$. Let $W'$ be a subgroup of $B(N)$ generated by $w_{N_1},\ldots,w_{N_s} \ (N_i\mid\mid N)$, and let $W''=\left<w_9^{\epsilon(N_i)}w_{N_i}\right>$, where \[\epsilon(M)=\begin{cases}
        0 & \textup{ if } M\equiv1\pmod{3} \textup{ or if } 9\mid\mid M \textup{ and } M/9\equiv1\pmod{3},\\
        1 & \textup{ otherwise}.
    \end{cases}\] Then we have the following isomorphism \[X_0(N)/W'\cong X_0(N)/W''.\] It is defined over $\Q(\sqrt{-3})$, see \cite[Lemma 1]{FurumotoHasegawa1999} and \cite[Proposition 4.19]{bars22biellipticquotients}.
\end{prop}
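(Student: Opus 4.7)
The plan is to exhibit an automorphism $\sigma$ of $X_0(N)$, defined over $\Q(\sqrt{-3})$, such that conjugation by $\sigma$ sends $W'$ to $W''$; then $\sigma$ descends to the claimed isomorphism between the two quotients.

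Write $N=9M$ with $\gcd(M,3)=1$. The first step is to construct $\sigma$. The normalizer of $\Gamma_0(9)$ in $\PSL_2(\mathbb{R})$ strictly contains the Atkin--Lehner group $\langle w_9\rangle$: it also contains an extra element of order~$3$ that admits a representative in $\GL_2^+(\Q)$, and together with $w_9$ generates a dihedral group of order $6$ modulo $\Gamma_0(9)$. Because $\gcd(9,M)=1$, this matrix also normalizes $\Gamma_0(9M)=\Gamma_0(N)$ and hence descends to an automorphism $\sigma\in\Aut(X_0(N))$. A Galois-descent argument, carried out in \cite[Lemma~1]{FurumotoHasegawa1999} and treated more systematically in \cite[Proposition~4.19]{bars22biellipticquotients}, then shows that $\sigma$ is defined over $\Q(\sqrt{-3})$ but not over $\Q$.

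Second, I would compute $\sigma w_{N_i}\sigma^{-1}$ modulo $\Gamma_0(N)$ for each $N_i\mid\mid N$. Writing $w_{N_i}$ as a matrix
\[w_{N_i}=\begin{pmatrix} N_i a & b \\ N c & N_i d\end{pmatrix},\]
the conjugation is a direct matrix product. A short residue calculation modulo~$3$ (and modulo~$9$ when $9\mid N_i$) shows that the resulting element is congruent to $w_{N_i}$ modulo $\Gamma_0(N)$ in exactly the two cases covered by $\epsilon(N_i)=0$, and congruent to $w_9 w_{N_i}$ otherwise. This is essentially the content of the two references above, which I would invoke directly rather than reproduce by hand.

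Combining the two steps gives $\sigma W'\sigma^{-1}=W''$, so $\sigma$ descends to a $\Q(\sqrt{-3})$-isomorphism $X_0(N)/W'\to X_0(N)/W''$. The main obstacle in producing the argument from scratch is the second step, and specifically the subcase $9\mid N_i$ with $N_i/9\equiv 1\pmod 3$: this case is invisible to a naive reduction modulo~$3$ and forces one to keep track of matrix entries modulo~$9$, which is what gives the definition of $\epsilon$ its apparently asymmetric form.
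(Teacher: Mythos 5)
The paper does not actually prove this proposition --- it imports it verbatim from \cite[Proposition 5]{FurumotoHasegawa1999} --- so your attempt must be measured against the cited argument, whose overall shape you correctly guess: conjugate $W'$ to $W''$ by a suitable element of the normalizer of $\Gamma_0(N)$ and let it descend to the quotients. But your realization of this has a genuine error: the conjugator \emph{cannot} be the extra element of order $3$, and both structural claims you hang on it are false. Concretely, take $N=18$, $\sigma=v=\left(\begin{smallmatrix}1&1/3\\0&1\end{smallmatrix}\right)$ and the Atkin--Lehner representative $w_2=\left(\begin{smallmatrix}10&1\\18&2\end{smallmatrix}\right)$. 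A direct computation gives $w_2^{-1}vw_2\equiv v^2 \pmod{\Gamma_0(18)}$ (indeed $v^{-1}\bigl(w_2^{-1}vw_2v^{-1}\bigr)=\left(\begin{smallmatrix}25&-6\\-54&13\end{smallmatrix}\right)\in\Gamma_0(18)$), hence
\[
vw_2v^{-1}\;\equiv\; w_2\,v \pmod{\Gamma_0(18)},
\]
with an irremovable residual factor of $v$. Since $\epsilon(2)=1$, your step 2 predicts $vw_2v^{-1}\equiv w_9w_2$, which fails (and $\equiv w_2$ fails too); the conjugate does not even lie in $\Gamma_0(N)B(N)$, so $\sigma W'\sigma^{-1}\not\subseteq B(N)$ and $\sigma$ does not descend to a map $X_0(N)/W'\to X_0(N)/W''$ at all.

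The failure is structural, not an unlucky choice of representative. Conjugating $\Gamma_0(9M)$ by $\operatorname{diag}(3,1)$ turns it into $G=\bigl\{\left(\begin{smallmatrix}a&3b\\3Mc&d\end{smallmatrix}\right)\bigr\}$, whose mod-$3$ reduction is central; the relevant classes then live in $\mathrm{PGL}_2(\F_3)\cong S_4$, where $w_9\mapsto\kappa=\left(\begin{smallmatrix}0&1\\-1&0\end{smallmatrix}\right)$ and $w_{M_i}\mapsto 1$ or $r=\operatorname{diag}(1,-1)$ according as $M_i\equiv1$ or $2\pmod 3$. Realizing the $\epsilon$-rule requires a class $P$ with $P\kappa P^{-1}=\kappa$ and $PrP^{-1}=\kappa r$. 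But the centralizer of $\kappa$ in $\PSL_2(\F_3)\cong A_4$ is the Klein four-group, so any working $P$ has order $2$: \emph{no} element of order $3$ can satisfy even the first condition (and a fixed-point count on $\PP^1(\F_3)$ shows no $3$-cycle satisfies the second either). A correct conjugator is an involution-type normalizer element with $P\equiv\left(\begin{smallmatrix}1&1\\1&-1\end{smallmatrix}\right)\pmod 3$ and $P\in\Gamma_0(M)$, i.e.\ at level $N$ a matrix of the shape $\left(\begin{smallmatrix}p&q/3\\3Mr&s\end{smallmatrix}\right)$ of determinant $1$. For the same reason your claim that $v$ and $w_9$ generate a dihedral group of order $6$ modulo $\Gamma_0$ is false: the ambient group is of $A_4$-type and $A_4$ has no subgroup of order $6$ (already for $N=9$ the pair $\langle v,w_9\rangle$ generates the full order-$12$ quotient). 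Finally, your closing diagnosis that only the subcase $9\mid N_i$ is delicate is off: as the computation above shows, your mechanism already breaks for $3\nmid N_i$ with $N_i\equiv2\pmod 3$.
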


Notice that Proposition \ref{iso9prop} can give a non-trivial isomorphism only when $w_9\notin W'$ (if $w_9\in W'$, then $W'=W''$ by definition).

\begin{prop}\label{prop9n_not_tetragonal}
    The quotient curve $X_0(N)/W_N$ has $\C$-gonality at least $5$ for the following pairs $(N,W_N)$.

    \begin{center}
\begin{longtable}{|c|c|c||c|c|c|}

\hline
\addtocounter{table}{-1}
$N$ & $W_N$ & $W''$ & $N$ & $W_N$ & $W''$\\
\hline
$306$ & $\left<w_{18},w_{34}\right>$ & $\left<w_2,w_{17}\right>$ & $306$ & $\left<w_2,w_{153}\right>$ & $\left<w_{17},w_{18}\right>$\\
\hline
$342$ & $\left<w_{18},w_{19}\right>$ & $\left<w_2,w_{19}\right>$ & $342$ & $\left<w_2,w_{171}\right>$ & $\left<w_{18},w_{38}\right>$\\
\hline
$360$ & $\left<w_{40},w_{45}\right>$ & $\left<w_5,w_8\right>$ & $360$ & $\left<w_5,w_{72}\right>$ & $\left<w_8,w_{45}\right>$\\
\hline
$396$ & $\left<w_{11},w_{36}\right>$ & $\left<w_{36},w_{44}\right>$ & $396$ & $\left<w_4,w_{99}\right>$ & $\left<w_4,w_{11}\right>$\\
\hline
$414$ & $\left<w_{18},w_{46}\right>$ & $\left<w_2,w_{23}\right>$ & $414$ & $\left<w_2,w_{207}\right>$ & $\left<w_{18},w_{23}\right>$\\
\hline
$495$ & $\left<w_{45},w_{55}\right>$ & $\left<w_5,w_{11}\right>$ & $495$ & $\left<w_5,w_{99}\right>$ & $\left<w_{11},w_{45}\right>$\\
\hline
$504$ & $\left<w_8,w_{63}\right>$ & $\left<w_{56},w_{63}\right>$ & $504$ & $\left<w_7,w_{72}\right>$ & $\left<w_7,w_8\right>$\\
\hline
$558$ & $\left<w_{18},w_{31}\right>$ & $\left<w_2,w_{31}\right>$ & $558$ & $\left<w_2,w_{279}\right>$ & $\left<w_{18},w_{62}\right>$\\
\hline
$585$ & $\left<w_{13},w_{45}\right>$ & $\left<w_5,w_{13}\right>$ & $585$ & $\left<w_5,w_{117}\right>$ & $\left<w_{45},w_{65}\right>$\\
\hline
$630$ & $\left<w_5,w_7,w_{18}\right>$ & $\left<w_2,w_7,w_{45}\right>$ & $630$ & $\left<w_5,w_{14},w_{18}\right>$ & $\left<w_2,w_{35},w_{45}\right>$\\
\hline
$630$ & $\left<w_7,w_{10},w_{18}\right>$ & $\left<w_2,w_5,w_7\right>$ & $630$ & $\left<w_{10},w_{14},w_{18}\right>$ & $\left<w_2,w_5,w_{63}\right>$\\
\hline
$990$ & $\left<w_2,w_{45},w_{55}\right>$ & $\left<w_5,w_{11},w_{18}\right>$ & $990$ & $\left<w_5,w_{18},w_{22}\right>$ & $\left<w_2,w_{11},w_{45}\right>$\\
\hline
$990$ & $\left<w_{10},w_{11},w_{18}\right>$ & $\left<w_2,w_5,w_{99}\right>$ & $990$ & $\left<w_{10},w_{18},w_{22}\right>$ & $\left<w_2,w_5,w_{11}\right>$\\
\hline
\caption*{}
\end{longtable}
\end{center}
\end{prop}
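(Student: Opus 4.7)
The plan is to apply Proposition~\ref{iso9prop} directly: for each pair $(N, W_N)$ in the table I read off $W''$ by the rule $W'' = \langle w_9^{\epsilon(N_i)} w_{N_i}\rangle$ (where $N_i$ runs over the generators of $W_N$ other than any copy of $w_9$), obtaining a $\Q(\sqrt{-3})$-isomorphism $X_0(N)/W_N \cong X_0(N)/W''$. Since $\Q(\sqrt{-3}) \subset \C$, this is in particular a $\C$-isomorphism, and by Proposition~\ref{poonen}(ii) the $\C$-gonality is an invariant under base change between algebraically closed fields. Hence it suffices to show that each $X_0(N)/W''$ listed in the third/sixth column has $\C$-gonality at least $5$.

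Next I would verify that each $W''$ has in fact already been handled earlier in the paper. For instance, $\langle w_2, w_{17}\rangle$ at $N=306$ and $\langle w_{17},w_{18}\rangle$ at $N=306$ appear in Propositions~\ref{Fp_gonality} and \ref{betti0prop} respectively; $\langle w_2,w_{19}\rangle$ and $\langle w_{18},w_{38}\rangle$ at $N=342$ appear in Propositions~\ref{CSprop_star} and \ref{betti0prop}; and similarly for the remaining entries, which one checks against the tables of Propositions~\ref{Fp_gonality}, \ref{prop_counting_Fpn_points}, \ref{betti0prop}, \ref{CSprop_star}, and \ref{CSprop_other}. In each case a lower bound $\textup{gon}_\C \geq 5$ (or equivalently $\textup{gon}_{\F_p}\geq 5$ combined with the Castelnuovo–Severi/Betti argument, or a direct $\C$-bound) has been established.

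The only genuine content of the proof is therefore a table-lookup: for each row of the table in Proposition~\ref{prop9n_not_tetragonal}, cite the previous proposition that bounds the $\C$-gonality of $X_0(N)/W''$ from below by $5$, and invoke Proposition~\ref{iso9prop} to transfer this bound back to $X_0(N)/W_N$. No new computation is needed beyond computing the $\epsilon$ values, which are a routine check modulo $3$. The mild obstacle, if any, is purely bookkeeping: one must confirm that the $W''$ produced by the recipe genuinely matches one of the groups in the earlier tables (up to reordering generators or the implicit relation $w_d w_e = w_{de/\gcd(d,e)^2}$ in $B(N)$), which is immediate in each case.
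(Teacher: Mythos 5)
Your overall route is the paper's: invoke Proposition~\ref{iso9prop} to get the isomorphism $X_0(N)/W_N\cong X_0(N)/W''$ (over $\Q(\sqrt{-3})$, hence over $\C$, which is all that is needed since $\C$-gonality is preserved by any $\C$-isomorphism), and then transfer a lower bound $\gon_\C(X_0(N)/W'')\geq 5$ from the earlier tables. The table-lookup itself is sound, and you correctly cite \ref{iso9prop} where the paper's own proof contains a typo (it cites Proposition~\ref{prop4n}).

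There is, however, one genuine gap in your transfer step. Several of the groups $W''$ (e.g.\ $\left<w_2,w_{17}\right>$ at $N=306$, $\left<w_5,w_8\right>$ at $N=360$, $\left<w_{56},w_{63}\right>$ at $N=504$, and all the $N=630,990$ entries) are handled earlier \emph{only} by Propositions~\ref{Fp_gonality} or \ref{prop_counting_Fpn_points}, which bound the $\F_p$-gonality and hence only the $\Q$-gonality of $X_0(N)/W''$; a priori a curve can have $\gon_\Q\geq5$ but $\gon_\C=4$. Your parenthetical claim that $\gon_{\F_p}\geq 5$ ``combined with the Castelnuovo--Severi/Betti argument'' yields a $\C$-bound is not the right mechanism: the Castelnuovo--Severi and Betti-number arguments are independent proofs applied to \emph{other} curves, and neither converts a $\Q$-gonality bound into a $\C$-gonality bound. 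The missing ingredient is Corollary~\ref{towerthmcor}: every curve appearing in the $\F_p$-based propositions has genus $\geq 10$ and a rational cusp, so $\gon_\C=4$ would force $\gon_\Q=4$, contradicting the $\F_p$-bound. This is exactly how the paper argues --- Proposition~\ref{betti0prop} for the $W''$ of genus $\leq 9$ (a direct $\C$-bound), and Propositions~\ref{Fp_gonality}, \ref{prop_counting_Fpn_points}, \ref{CSprop_star} together with Corollary~\ref{towerthmcor} for those of genus $\geq 10$. With that single citation (and the accompanying genus check) added, your proof is complete and identical in substance to the paper's.
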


\begin{proof}
    We have the isomorphism $X_0(N)/W_N\cong X_0(N)/W''$ defined over $\C$ by Proposition \ref{prop4n}.
    For curves $X_0(N)/W''$ of genus $\leq9$ we know from Proposition \ref{betti0prop} that they are not $\C$-tetragonal. For curves of genus $\geq10$ we know from Propositions \ref{Fp_gonality}, \ref{prop_counting_Fpn_points}, \ref{CSprop_star} that they are not $\Q$-tetragonal. Hence they are also not $\C$-tetragonal by Corollary \ref{towerthmcor}.
\end{proof}

\section{Proofs of the main theorems}\label{thmproof_section}

The proof of Theorem \ref{trigonalthm} is given in Propositions \ref{genus4trig} and \ref{genus4gon4} where we determine the fields of definition of trigonal maps. We now give the proof of Theorem \ref{tetragonalthm}.

\begin{proof}[Proof of Theorem \ref{tetragonalthm}]
    If $\omega(N)\leq2$, then there are no subgroups $W_N\subseteq B(N)$ such that $4\leq |W_N|\leq2^{\omega(N)-1}$. Thus we may suppose that $\omega(N)\geq3$. We may eliminate all levels $N$ such that $\gon_\C(X_0^*(N))\geq5$ by Proposition \ref{poonen} (vii). Furthermore, in Proposition \ref{Ctetragonal_star_genus6} we deal with the levels $N$ such that $\gon_\C(X_0^*(N))=4<\gon_\Q(X_0^*(N))$. This leaves us with levels $N$ for which $\gon_\Q(X_0^*(N))\leq4$ (and $N=378$). All such levels are listed in \cite{Hasegawa1997}, \cite{HasegawaShimura2000}, \cite{Orlic2025star}.

    For all these levels $N$ we have $\omega(N)\leq4$. For curves $X_0(N)/W_N$ listed in the theorem we use Propositions \ref{genus6gon4}, \ref{betti_positive_prop}, \ref{deg2star}, \ref{pointsearch}, \ref{prop4n_gon4} to prove the existence of a rational degree $4$ morphism to $\PP^1$.
    
    For other curves we use Propositions \ref{Fp_gonality}, \ref{Fp2points}, \ref{betti0prop}, \ref{CSprop_star}, \ref{CSprop_other}, \ref{prop4n_not_tetragonal}, \ref{prop9n_not_tetragonal} to disprove the existence of a degree $4$ morphism to $\PP^1$. Note that Propositions \ref{Fp_gonality}, \ref{Fp2points} only give a lower bound on the $\Q$-gonality, but all curves there are of genus $g\geq10$ and we can use Corollary \ref{towerthmcor} to obtain a bound on the $\C$-gonality.

    For \[N\in\{630,660,714,770,780,798,910,990\}\] we only considered curves $X_0(N)/W_N$ with $|W_N|=8$. This is because we proved that all such curves have $\C$-gonality at least $5$. Proposition \ref{poonen} (vii) then tells us that all curves $X_0(N)/W_N$ with $|W_N|=4$ also have $\C$-gonality at least $5$ (as every group $W_N$ of order $4$ is a subgroup of some group of order $8$).
\end{proof}

For the end of the paper we summarize the result of Theorem \ref{tetragonalthm}. On the following page we present the table containing all $\Q$-tetragonal (and $\C$-tetragonal) curves $X_0(N)/W_N$ with $4\leq|W_N|\leq 2^{\omega(N)-1}$.

\newpage

\begin{center}
\begin{longtable}{|c|c|}
\caption{All $\Q$-tetragonal curves $X_0(N)/W_N$ with $4\leq|W_N|\leq 2^{\omega(N)-1}$.}
\label{main:table}\\
\hline
$N$ & $W_N$\\
    \hline
$102$ & $\left<w_2,w_3\right>$, $\left<w_3,w_{34}\right>$\\
\hline
$114$ & $\left<w_2,w_3\right>$, $\left<w_3,w_{19}\right>$, $\left<w_3,w_{57}\right>$, $\left<w_6,w_{19}\right>$\\
\hline
$120$ & $\left<w_3,w_8\right>$, $\left<w_3,w_{40}\right>$, $\left<w_5,w_8\right>$\\
\hline
$126$ & $\left<w_2,w_7\right>$, $\left<w_2,w_9\right>$, $\left<w_7,w_{18}\right>$\\
\hline
$130$ & $\left<w_2,w_5\right>$, $\left<w_{10},w_{13}\right>$\\
\hline
$132$ & $\left<w_3,w_4\right>$, $\left<w_3,w_{11}\right>$, $\left<w_{12},w_{33}\right>$\\
\hline
$138$ & $\left<w_2,w_3\right>$, $\left<w_6,w_{46}\right>$, $\left<w_3,w_{46}\right>$, $\left<w_2,w_{69}\right>$\\
\hline
$140$ & $\left<w_4,w_5\right>$, $\left<w_4,w_7\right>$, $\left<w_5,w_7\right>$, $\left<w_5,w_{28}\right>$\\
\hline
$150$ & $\left<w_2,w_3\right>$, $\left<w_2,w_{25}\right>$, $\left<w_3,w_{25}\right>$, $\left<w_6,w_{25}\right>$\\
\hline
$154$ & \textup{all of size 4 except} $\left<w_2,w_{77}\right>$, $\left<w_{11},w_{14}\right>$\\
\hline
$156$ & $\left<w_3,w_4\right>$, $\left<w_4,w_{13}\right>$, $\left<w_{12},w_{13}\right>$, $\left<w_3,w_{52}\right>$\\
\hline
$165$ & \textup{all of size 4 except} $\left<w_5,w_{11}\right>$\\
\hline
$168$ & \textup{all of size 4 except} $\left<w_{24},w_{56}\right>$\\
\hline
$170$ & \textup{all of size 4 except} $\left<w_{10},w_{17}\right>$, $\left<w_5,w_{34}\right>$\\
\hline
$174$ & \textup{all of size 4 except} $\left<w_3,w_{29}\right>$, $\left<w_2,w_{87}\right>$\\
\hline
$180$ & \textup{all of size 4}\\
\hline
$182$ & \textup{all of size 4 except} $\left<w_2,w_{91}\right>$, $\left<w_{14},w_{26}\right>$\\
\hline
$186$ & \textup{all of size 4 except} $\left<w_3,w_{62}\right>$\\
\hline
$190$ & $\left<w_3,w_5\right>$, $\left<w_3,w_{13}\right>$, $\left<w_5,w_{13}\right>$, $\left<w_{13},w_{15}\right>$\\
\hline
$198$ & \textup{all of size 4}\\
\hline
$204$ & \textup{all of size 4 except} $\left<w_3,w_{68}\right>$\\
\hline
$210$ & $\left<w_6,w_{10}\right>$, $\left<w_6,w_{14}\right>$, $\left<w_{10},w_{14}\right>$, $\left<w_{15},w_{21}\right>$, $\left<w_6,w_{35}\right>$, $\left<w_{10},w_{21}\right>$, $\left<w_{14},w_{15}\right>$, $\left<w_{30},w_{35}\right>$\\
& $\left<w_2,w_3,w_5\right>$, $\left<w_2,w_3,w_7\right>$, $\left<w_2, w_5,w_{21}\right>$, $\left<w_2,w_7,w_{15}\right>$, $\left<w_3,w_7,w_{10}\right>$, $\left<w_3,w_5,w_{14}\right>$\\
\hline
$220$ & \textup{all of size 4}\\
\hline
$222$ & \textup{all of size 4 except} $\left<w_6,w_{74}\right>$\\
\hline
$228$ & $\left<w_3,w_4\right>$, $\left<w_4,w_{19}\right>$, $\left<w_3,w_{76}\right>$\\
\hline
$230$ & \textup{all of size 4}\\
\hline
$231$ & \textup{all of size 4 except} $\left<w_3,w_{77}\right>$\\
\hline
$234$ & $\left<w_{18},w_{26}\right>$, $\left<w_9,w_{26}\right>$, $\left<w_2,w_{117}\right>$\\
\hline
$238$ & $\left<w_2,w_7\right>$, $\left<w_2,w_{17}\right>$, $\left<w_7,w_{34}\right>$, $\left<w_{14},w_{17}\right>$\\
\hline
$240$ & $\left<w_5,w_{16}\right>$, $\left<w_{15},w_{16}\right>$, $\left<w_{15},w_{48}\right>$, $\left<w_3,w_{80}\right>$\\
\hline
$252$ & \textup{all of size 4}\\
\hline
$255$ & \textup{all of size 4 except} $\left<w_5,w_{51}\right>$\\
\hline
$258$ & $\left<w_2,w_{43}\right>$, $\left<w_6,w_{86}\right>$\\
\hline
$260$ & $\left<w_4,w_{13}\right>$, $\left<w_4,w_{65}\right>$\\
\hline
$264$ & $\left<w_8,w_{33}\right>$, $\left<w_{11},w_{24}\right>$\\
\hline
$266$ & \textup{all of size 4}\\
\hline
$273$ & $\left<w_3,w_7\right>$, $\left<w_3,w_{13}\right>$, $\left<w_3,w_{91}\right>$\\
\hline
$276$ & \textup{all of size 4}\\
\hline
$282$ & $\left<w_2,w_{47}\right>$, $\left<w_3,w_{47}\right>$, $\left<w_6,w_{47}\right>$\\
\hline
$285$ & \textup{all of size 4 except} $\left<w_3,w_{95}\right>$\\
\hline
$286$ & \textup{all of size 4 except} $\left<w_2,w_{143}\right>$\\
\hline
$294$ & $\left<w_6,w_{98}\right>$\\
\hline
$300$ & $\left<w_4,w_{75}\right>$\\
\hline
$310$ & $\left<w_5,w_{31}\right>$\\
\hline
$312$ & $\left<w_{24},w_{39}\right>$\\
\hline
$315$ & \textup{all of size 4}\\
\hline
$330$ & \textup{all of size 8 except} $\left<w_3,w_{10},w_{11}\right>$\\
\hline
$336$ & $\left<w_{16},w_{21}\right>$\\
\hline
$357$ & \textup{all of size 4}\\
\hline
$360$ & $\left<w_9,w_{40}\right>$\\
\hline
$380$ & \textup{all of size 4}\\
\hline
$390$ & \textup{all of size 8}\\
\hline
\end{longtable}
\end{center}

\bibliographystyle{siam}
\bibliography{references}

\end{document}